\theoremstyle{plain}
\newtheorem*{question*}{Question}
\newtheorem{theorem}{Theorem}
\newtheorem{proposition}{Proposition}
\newtheorem{lemma}{Lemma}
\newtheorem{corollary}{Corollary}
\newtheorem*{definition*}{Definition}
\newtheorem*{example*}{Example}
\newtheorem*{theorem*}{Theorem} 
\newtheorem*{proposition*}{Proposition}
\theoremstyle{definition}
\theoremstyle{remark}
\newtheorem{remark}{Remark}
\newtheorem*{remark*}{Remark}
\newtheorem{example}{Example}
\newcommand{\FF}{\mathbb F}
\newcommand{\calO}{\mathcal{O}}
\newenvironment{enumalph}
{\begin{enumerate}}
{\end{enumerate}}
\def\ZZ{\mathbb Z}
\def\QQ{\mathbb Q}
\def\RR{\mathbb R}
\def\CC{\mathbb C}
\def\OO{\calO}   
\def\epsilon{\varepsilon}
\def\phi{\varphi}
\def\mod{\,\text{mod}\,}
\def\EK4{E_{K,4} }
\def\norm#1{\text{Norm}_{k/\QQ} ( #1 )}
\def\nnorm#1#2{\text{Norm}_{#1/\QQ} ( #2 )}              
\def\nnnorm#1#2#3{\text{Norm}_{#1/#2} ( #3 )}
\def\gp#1{\langle \, #1 \, \rangle}    
\DeclareMathOperator{\iso}{\simeq}
\DeclareMathOperator{\Gal}{Gal}
\DeclareMathOperator{\sign}{sign}
\begin{document}

\title[Unit Signatures in Real Biquadratic and Multiquadratic Number Fields]
{Unit Signatures in Real Biquadratic and Multiquadratic Number Fields}

\author{David S.\ Dummit}
\address{Department of Mathematics, University of Vermont, Lord House, 16 Colchester Ave., Burlington, VT 05405, USA}
\email{dummit@math.uvm.edu}

\author{Hershy Kisilevsky}
\address{Hershy Kisilevsky, Department of Mathematics and Statistics and CICMA, Concordia University, 1455 de Maisonneuve  Blvd. West, Montr\'eal, Quebec, H3G 1M8, CANADA}
\email{hershy.kisilevsky@concordia.ca}

\date{\today}

\begin{abstract}
We consider the signature rank of the units in real multiquadratic fields.  When
the three quadratic subfields of a real biquadratic field $K$ either 
(a) all have
signature rank 2 (that is, fundamental units of norm $-1$),
or (b) all have
signature rank 1 (that is, have totally positive fundamental units), 
we provide
explicit examples to show there exist infinitely many $K$ having each of the possible
unit signature ranks (namely signature rank 3 or 4 in case (a) and signature
rank 1,2, or 3 in case (b)).  We make some additional remarks for higher rank
real multiquadratic fields, in particular proving the rank of the totally positive
units modulo squares in such extensions (hence also in the totally real subfield
of cyclotomic fields) can be arbitrarily large.
\end{abstract}

\subjclass[2010]{11R27 (primary), and 11R16, 11R20, 11R80, 11R29 (secondary)} 

\keywords{unit signature rank, biquadratic and multiquadratic fields, fundamental units}

\maketitle

\section{Introduction}

Suppose $F$ is a totally real field of degree $n$ over $\QQ$ and $0 \ne \alpha \in F$.  For any of the $n$
real places $v:F \hookrightarrow \RR$ of $F$, let $\sign(v(\alpha))$  denote the sign of
the element $v(\alpha) \in \RR$.  
We frequently view the sign of an element as lying in the additive group $\FF_2$ rather than the
multiplicative group $\{ \pm 1 \}$ (so having value 0 if $v(\alpha) > 0$ and value 1 if $v(\alpha) < 0$),
and the point of view being used should be clear from the context.
The $n$-tuple of signs $(\dots , \sign(v(\alpha)), \dots)$ is called the {\it signature} of $\alpha$.  

When $F/\QQ$ is Galois and one real embedding of $F$ is fixed, we can view $F$ as
a subfield of $\RR$ and the real embeddings of $F$ are indexed by the elements $\sigma$ in
$\Gal(F/\QQ)$.  In this case, the signature of $\alpha$ is given by the $n$-tuple of signs of
the real numbers $\sigma(\alpha)$.

The collection of signatures of the units of $F$ (viewed additively) is a subspace of $\FF_2^n$ called the 
{\it unit signature group} of $F$ and the rank of this subspace is called the {\it (unit) signature rank} of $F$.
As in \cite{DDK}, define the {\it (unit signature rank) ``deficiency'' of $F$}, denoted $\delta(F)$, to be 
the corank of the unit signature group of $F$, that is, 
$n$ minus the signature rank of the units of $F$.  The deficiency of $F$ is  
the nonnegative difference between the
unit signature rank of $F$ and its maximum possible value and is 0 if and only if there
are units of every possible signature type.  The deficiency is also the rank of the
group of totally positive units of $F$ modulo squares, and the class number of
$F$ times $2^{\delta(F)}$ gives the strict (or narrow) class number of $F$.  
It is trivial that the unit signature rank never decreases in a totally real extension of a totally real field.
As noted in \cite{DDK}, by a result of Edgar, Mollin and Peterson (\cite[Theorem 2.1]{EMP}), the unit signature
rank deficiency
also never decreases in a totally real extension of a totally real field.

In this paper we first consider the unit signature rank of real biquadratic fields $K$.
By a result of Kuroda (\cite[Satz 11]{Kur}), which we summarize in the following proposition,
the unit group, $E_K$, of a real biquadratic field $K$ 
can be obtained by extracting appropriate 
square roots of elements in the group generated by the units from the quadratic
subfields of $K$.   

\begin{proposition} \label{prop:unitstructure}
Suppose $K$ is a real biquadratic extension of $\QQ$ having quadratic subfields $k_1$, $k_2$, $k_3$, with corresponding
fundamental units $\epsilon_1$, $\epsilon_2$ and $\epsilon_3$. 

\begin{enumalph}

\item
The group $E_K / \gp{-1, \epsilon_1 , \epsilon_2, \epsilon_3}$ is an elementary
abelian 2-group (of rank at most 3).

\item 
If not all of $\nnorm{k_1}{\epsilon_1}$, $\nnorm{k_2}{\epsilon_2}$, and $\nnorm{k_3}{\epsilon_3}$
are equal to $-1$, then there are (up to a permutation of the quadratic subfields) precisely 7 possibilities 
for the unit group $E_K$:
\vspace{-10pt}
\begin{multicols}{2} 
\begin{enumerate}

\item[{1.}]  
$ \gp{-1, \epsilon_1 , \epsilon_2, \epsilon_3} $

\item[{2.}] 
$ \gp{-1, \sqrt{\epsilon_1}, \epsilon_2, \epsilon_3} $

\item[{3.}] 
$ \gp{-1, \sqrt{\epsilon_1}, \sqrt{\epsilon_2}, \epsilon_3} $

\item[{4.}] 
$ \gp{-1, \sqrt{\epsilon_1 \epsilon_2}, \epsilon_2, \epsilon_3} $

\item[{5.}] 
$ \gp{-1, \sqrt{\epsilon_1 \epsilon_2}, \sqrt{\epsilon_3}, \epsilon_2} $

\item[{6.}] 
$ \gp{-1, \sqrt{\epsilon_1 \epsilon_2}, \sqrt{\epsilon_2 \epsilon_3}, \sqrt{\epsilon_3 \epsilon_1} } $

\item[{7.}] 
$ \gp{-1, \epsilon_1 , \epsilon_2, \sqrt{ \epsilon_1 \epsilon_2 \epsilon_3} } $

\end{enumerate}
\end{multicols}
\vskip -10pt
\noindent
where in each case any unit appearing in a square root has norm $+1$.

\smallskip

\item

If $\nnorm{k_1}{\epsilon_1}= \nnorm{k_2}{\epsilon_2} = \nnorm{k_3}{\epsilon_3} = -1$, there are
precisely 2 possibilities for the unit group $E_K$:

\begin{enumerate}

\item[{1.}] 
$ \gp{-1, \epsilon_1 , \epsilon_2, \epsilon_3} $

\item[{2.}] 
$ \gp{-1, \epsilon_1 , \epsilon_2, \sqrt{ \epsilon_1 \epsilon_2 \epsilon_3} } $

\end{enumerate}

\end{enumalph}

\end{proposition}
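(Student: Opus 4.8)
The plan is to describe everything in terms of the finite $2$-group $Q := E_K / U$, where I write $\Gal(K/\QQ) = \{1,\tau_1,\tau_2,\tau_3\}$ with $k_i$ the fixed field of $\tau_i$, $E_i = \gp{-1,\epsilon_i}$, and $U = \gp{-1,\epsilon_1,\epsilon_2,\epsilon_3} = E_1E_2E_3$. For part (a), I would first record the ``norm identity'': for $u \in E_K$, since $\normK{u} = u\,\tau_1(u)\tau_2(u)\tau_3(u) = \pm 1$, we have $\tau_1(u)\tau_2(u)\tau_3(u) = \pm u^{-1}$, and therefore
\[
u^{1+\tau_1}\,u^{1+\tau_2}\,u^{1+\tau_3} = u^{3}\,\tau_1(u)\,\tau_2(u)\,\tau_3(u) = \pm\, u^{2}.
\]
Each factor $u^{1+\tau_i} = u\,\tau_i(u)$ is fixed by $\tau_i$, hence lies in $k_i$, and being a unit lies in $E_i$; thus $u^2 \in U$. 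So $E_K^2 \subseteq U$ and $Q$ is an elementary abelian $2$-group. Since $K$ is totally real of degree $4$, Dirichlet gives $E_K \cong \{\pm1\}\times\Z^3$, so $\dim_{\FF_2} E_K/E_K^2 = 4$; and because $\epsilon_1,\epsilon_2,\epsilon_3$ are multiplicatively independent modulo $\{\pm1\}$ (they span distinct eigenspaces), $U$ has finite index in $E_K$. As $-1 \in U$ but $-1 \notin E_K^2$ (the field being real), the image of $U$ in $E_K/E_K^2$ is nonzero, so from $0 \to U/E_K^2 \to E_K/E_K^2 \to Q \to 0$ we read off $\dim_{\FF_2} Q \le 3$, proving (a).

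For parts (b) and (c), I would reinterpret $Q$ via the squaring map $v \mapsto v^2$, which identifies $Q$ with $\{\,w \in U : w \in (K^\times)^2\,\}/U^2 \hookrightarrow U/U^2 \cong \FF_2^4$ (with basis $-1,\epsilon_1,\epsilon_2,\epsilon_3$); because $-1 \notin (K^\times)^2$, dropping the $-1$-coordinate embeds $Q$ into $\FF_2^3$, with coordinates the parities of the $\epsilon_i$-exponents. Write $\overline{Q}$ for this image and set $N_i = \nnorm{k_i}{\epsilon_i}$. The crucial necessary condition comes from intermediate norms: if $w = (-1)^{a_0}\epsilon_1^{a_1}\epsilon_2^{a_2}\epsilon_3^{a_3} = v^2$, applying $\nnnorm{K}{k_m}{\,\cdot\,} = (\,\cdot\,)\tau_m(\,\cdot\,)$ and using $\nnnorm{K}{k_m}{-1}=1$, $\nnnorm{K}{k_m}{\epsilon_m}=\epsilon_m^2$, and $\nnnorm{K}{k_m}{\epsilon_n}=N_n$ for $n\neq m$, gives $\nnnorm{K}{k_m}{w} = \epsilon_m^{2a_m}\prod_{n\neq m} N_n^{a_n}$. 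This must be $\ge 0$, being the square of the real number $\nnnorm{K}{k_m}{v}$, so for every $m$ we obtain $\prod_{n\neq m} N_n^{a_n} = +1$. This immediately settles part (c): when all $N_i = -1$, the condition forces $a_1 \equiv a_2 \equiv a_3 \pmod 2$, so $\overline{Q} \subseteq \gp{(1,1,1)}$; hence $Q$ is either trivial or generated by a square root of $\pm\,\epsilon_1\epsilon_2\epsilon_3$, which are the two possibilities listed.

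For part (b) I would next invoke the sharper bound $\dim_{\FF_2}\overline{Q} \le 2$, equivalently $[E_K:U] \mid 4$, and then finish by pure linear algebra. I expect \emph{this bound to be the main obstacle}: when all $N_i = +1$ the norm condition above is vacuous, so nothing so far prevents $\overline{Q} = \FF_2^3$; ruling this out amounts to showing that $\sqrt{\epsilon_1},\sqrt{\epsilon_2},\sqrt{\epsilon_3}$ cannot all lie in $K$, which (the purely Galois-theoretic sign relations being consistent) is genuinely arithmetic and requires a $2$-adic/ramification analysis of the quadratic extensions $K/k_m$ — essentially a Hasse-norm computation — and is the heart of Kuroda's index calculation.

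Granting $\dim_{\FF_2}\overline{Q}\le 2$ together with the norm constraints, the classification is then a finite check: the subspaces of $\FF_2^3$ of dimension $\le 2$, taken up to the $S_3$-action permuting the three coordinates (the three subfields), fall into exactly $1+3+3 = 7$ orbits — dimension $0$; dimension $1$ with a generator of Hamming weight $1$, $2$, or $3$; and dimension $2$ (a hyperplane) with normal vector of weight $1$, $2$, or $3$. Reading off the corresponding radicands $\sqrt{\epsilon_i}$, $\sqrt{\epsilon_i\epsilon_j}$, and $\sqrt{\epsilon_1\epsilon_2\epsilon_3}$ produces precisely the seven groups listed, and the condition $\prod_{n\neq m} N_n^{a_n} = +1$ is exactly the assertion that every unit appearing under a square root has norm $+1$.
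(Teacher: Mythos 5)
The paper offers no proof of this proposition---it is a restatement of Kuroda's Satz 11 with only a citation---so there is nothing internal to compare against; your argument has to stand on its own. Parts (a) and (c) of your proposal are complete and correct: the identity $u^{1+\tau_1}u^{1+\tau_2}u^{1+\tau_3}=\pm u^2$, with each factor $u\,\tau_i(u)$ a unit of $k_i$, gives $E_K^2\subseteq U$; the count via $E_K/E_K^2\cong\FF_2^4$ and $-1\in U\setminus E_K^2$ gives rank at most $3$; and in case (c) the sign condition $\prod_{n\ne m}N_n^{a_n}=+1$ does force $a_1\equiv a_2\equiv a_3\pmod 2$, with only $+\epsilon_1\epsilon_2\epsilon_3$ eligible to be a square since $-\epsilon_1\epsilon_2\epsilon_3$ is negative in the distinguished real embedding.

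The gap is exactly where you place it, and as the argument stands it is fatal to part (b): you need $\dim_{\FF_2}\overline Q\le 2$, and nothing you have set up delivers it. When $N_1=N_2=N_3=+1$ every one of your norm-sign constraints reads $+1=+1$, so the subspace $\overline Q=\FF_2^3$---that is, $E_K=\gp{-1,\sqrt{\epsilon_1},\sqrt{\epsilon_2},\sqrt{\epsilon_3}}$ of index $8$---passes every test you impose and would appear as an eighth case. Saying this is ``the heart of Kuroda's index calculation'' identifies the missing ingredient but does not supply it. It can, however, be closed by a short ramification argument rather than a full Hasse-norm computation: if $\sqrt{\epsilon_i}\in K$ then $K=k_i(\sqrt{\epsilon_i})$ with $\epsilon_i$ a unit, so $\disc(K/k_i)$ divides $(4)$. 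If this held for all three $i$, then any odd prime ramified in $K/\QQ$ would have cyclic (tame) inertia group $\{1,\tau_j\}$ for some $j$, hence would ramify in $K/k_j$ while being prime to $\disc(K/k_j)$---impossible. So $K/\QQ$ would be ramified only at $2$, forcing each of its three distinct real quadratic subfields to equal $\QQ(\sqrt 2)$, a contradiction. With that lemma in hand, your $S_3$-orbit count of the subspaces of $\FF_2^3$ of dimension at most $2$ (namely $1+3+3=7$ orbits) does produce exactly the seven listed groups, and your sign constraints yield the norm-$(+1)$ assertion. (If ``precisely'' is read as also asserting that every case actually occurs, that is Kubota's separate realizability result, which the paper cites independently.)
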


In \cite{Kub}, Kubota proved that each of the possibilites in Proposition \ref{prop:unitstructure}
occurs infinitely often, providing an explicit infinite family for each case.

Proposition \ref{prop:unitstructure} shows that the unit signature rank of the biquadratic field
$K$ is influenced both by the signatures of the fundamental units of the three quadratic 
subfields of $K$ and by how those fundamental units are situated in the units of $K$.
In this paper we concentrate on the two extreme cases: (a) where 
the three quadratic subfields either all have deficiency 0 (that is, fundamental units of norm $-1$),
and (b) where the three quadratic subfields all have deficiency 1 (that is, have totally positive fundamental units).
In case (a), the signature rank of the units of $K$ is either 3 or 4 and we provide an
explicit infinite family for each possibility 
(see Theorems \ref{thm:def0rank4} and \ref{thm:def0rank3} in Section~\ref{sec:deficiency0}).
In case (b), the signature rank of the units of $K$ is either 1, 2, or 3 and again we provide an
explicit infinite family for each possibility 
(see Theorems \ref{thm:def1ranks2and3} and \ref{theorem:deficiency3} in Section~\ref{sec:deficiency1}).

In the final section we prove some results for higher rank multiquadratic fields.  We prove that the 
unit signature rank deficiency can be unbounded in multiquadratic extensions, exhibiting 
specific families where the number of totally positive units that are independent modulo squares tends to infinity
(see Theorems \ref{theorem:qmultifields} and \ref{theorem:unboundeddeficiency}).  In particular
this proves the unit signature rank deficiency can be arbitrarily large in real cyclotomic
fields (see Theorem \ref{thm:cyclotomicrank}).

\section{Preliminaries on Units of norm $+1$ in real quadratic fields}

Suppose $k = \QQ(\sqrt d)$ is a real quadratic field ($d> 1$ a squarefee integer) with fundamental unit $\epsilon$, normalized
as usual so that $\epsilon > 1$ with respect to the embedding of $k$ into $\RR$ for which $\sqrt d > 0$.

If $\text{Norm}_{k/\QQ} (\epsilon) = +1$ then, by Hilbert's Theorem 90, 
%
\begin{equation} \label{eq:alphadef}
\epsilon =\sigma (\alpha) / \alpha 
\end{equation}
for some $\alpha \in \QQ(\sqrt d)$, which may be assumed to be an algebraic integer (for example, take
$\alpha = \sigma (\epsilon) + 1$).  
The principal ideal $(\alpha)$ is invariant under $\sigma$ (that is, is an ambiguous ideal)
since $\sigma (\alpha)$ differs from $\alpha$ by a unit, so the element $\alpha$ satisfying \eqref{eq:alphadef} 
can be further chosen so that the ideal $(\alpha)$ is the product of
distinct ramified primes.  Also, 
\eqref{eq:alphadef} shows that $\alpha$ and $\sigma (\alpha)$ have the same sign (in either embedding of $k$ into $\RR$), so
multiplying $\alpha$ by $-1$, if necessary, we may also assume that $\alpha$ is totally positive.  Then 
$0 < \alpha < \sigma(\alpha)$ in the embedding for which $\sqrt d > 0$ since $\epsilon > 1$ in this
embedding.  With these additional requirements, the element $\alpha$ is unique.  Let $m$ denote the norm of $\alpha$:
\begin{equation} \label{eq:mdef}
m = \alpha \ \sigma (\alpha) ,
\end{equation}
so that $m$ is a positive squarefree integer dividing the discriminant of $k$. 

From \eqref{eq:alphadef} we have
\begin{equation} \label{eq:meps}
m \ \epsilon = \alpha \, \sigma (\alpha) \dfrac{ \sigma (\alpha) }{\alpha} = \sigma (\alpha)^2
\end{equation}
so 
$m \epsilon$ is a square in $k^*$.  We have
\begin{equation} \label{eq:epsplus1}
\epsilon + 1 = \dfrac{ \sigma (\alpha) }{\alpha} + 1 = \dfrac{ \sigma (\alpha) + \alpha }{\alpha},
\end{equation}
so
\begin{equation} \label{eq:Normepsplus1}
\norm{\epsilon + 1} = \dfrac{ (\sigma (\alpha) + \alpha )^2 }{m},
\end{equation}
where $\alpha + \sigma (\alpha) \in \ZZ$.  Hence $m \, \norm{\epsilon + 1}$ is a square in $\ZZ$.  It follows that
$m$ is the squarefree part of $\norm{\epsilon + 1}$  and that $m$ is positive since $\norm{\epsilon + 1} > 1$
(note $\epsilon + 1 > 1$ with respect to both embeddings of $k$).

Finally, since $m \epsilon$ is a square in $k$, $m$ cannot equal 1 or $d$, as both of these values are
squares in $k$ and $\epsilon$ is not a square.

We summarize this in the following proposition.

\begin{proposition}[\cite{Kub}] \label{prop:m}
Suppose $\norm{\epsilon} = +1$ in $k = \QQ(\sqrt d)$ as above, and let
$m$ denote the squarefree part of the positive integer $\norm{\epsilon + 1}$. Then $m > 1$, 
$m$ divides the discriminant of $k$, $m \ne d$, $m$ is the norm of an integer in $k$, and 
$m \epsilon$ is a square in $k$.
\end{proposition}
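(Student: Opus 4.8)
The plan is to produce, via Hilbert's Theorem 90, an explicit integer $\alpha\in k$ realizing $\epsilon$ as the ratio $\sigma(\alpha)/\alpha$, to normalize $\alpha$ so that the ideal it generates is as simple as possible, and then to read off the five assertions—numbering them $(1)$--$(5)$ in the order stated in the Proposition—from the resulting identities \eqref{eq:meps} and \eqref{eq:Normepsplus1}. First I would invoke Hilbert 90: since $\norm{\epsilon}=+1$ means $\epsilon\,\sigma(\epsilon)=1$, the unit $\epsilon$ lies in the kernel of the relative norm and hence equals $\sigma(\alpha)/\alpha$ for some $\alpha\in k^*$, which (for instance taking $\alpha=\sigma(\epsilon)+1$, so that $\sigma(\alpha)/\alpha=(\epsilon+1)/(\sigma(\epsilon)+1)=\epsilon$ using $\epsilon\,\sigma(\epsilon)=1$) may be taken to be an algebraic integer, as in \eqref{eq:alphadef}. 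The crucial normalization is to replace $\alpha$ by a suitable $\QQ^*$-multiple so that the (automatically $\sigma$-stable) ideal $(\alpha)$ becomes a product of distinct ramified primes, and then to fix the remaining sign and scaling freedom by requiring $\alpha$ totally positive with $0<\alpha<\sigma(\alpha)$ in the embedding where $\sqrt d>0$. Finally I set $m=\norm{\alpha}=\alpha\,\sigma(\alpha)$ as in \eqref{eq:mdef}.

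With $\alpha$ so chosen, each claim follows from a short computation. Multiplying \eqref{eq:alphadef} by $m=\alpha\,\sigma(\alpha)$ gives $m\epsilon=\sigma(\alpha)^2$ as in \eqref{eq:meps}, which is assertion $(5)$ that $m\epsilon$ is a square in $k$; and $m=\norm{\alpha}$ with $\alpha$ an integer is assertion $(4)$ directly. Since $(\alpha)$ is a squarefree product of ramified primes, its norm $m$ is a positive squarefree integer all of whose prime factors divide $\disc(k)$, which is assertion $(2)$. For the identification of $m$ with the squarefree part of $\norm{\epsilon+1}$, I would use \eqref{eq:epsplus1} and \eqref{eq:Normepsplus1} to write $m\,\norm{\epsilon+1}=(\alpha+\sigma(\alpha))^2=(\Tr\alpha)^2$, a perfect square in $\ZZ$; since $m$ is squarefree this forces $m\mid\Tr\alpha$, whence $\norm{\epsilon+1}=m\,w^2$ for some integer $w$, so $m$ is the squarefree part of the positive integer $\norm{\epsilon+1}$ (positive because $\epsilon+1>1$ in both embeddings). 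Lastly, assertions $(1)$ and $(3)$ come from the observation that $m\epsilon$ is a square in $k$ while $\epsilon$, being a fundamental unit, is not: hence $m$ is not a square in $k$, so $m$ can equal neither $1=1^2$ nor $d=(\sqrt d)^2$, and being a positive integer different from $1$ it satisfies $m>1$.

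The only step requiring genuine care is the normalization of $\alpha$, so that is where I expect the main obstacle to lie. Equation \eqref{eq:alphadef} determines $\alpha$ only up to multiplication by $\ker(\beta\mapsto\sigma(\beta)/\beta)=\QQ^*$, and $(\alpha)$ is $\sigma$-stable because $\sigma(\alpha)$ and $\alpha$ differ by the unit $\epsilon$. The substance is the structure of ambiguous ideals in the quadratic field $k$: every $\sigma$-stable ideal factors as a rational ideal times a squarefree product of ramified primes, since at a split prime the only ambiguous combination $(\mathfrak p\,\sigma(\mathfrak p))^e=(p)^e$ is rational, at an inert prime the prime $(p)$ is itself rational, and only the ramified primes contribute a genuinely non-rational squarefree factor. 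Rescaling $\alpha$ by the appropriate positive rational therefore clears the rational part and leaves $(\alpha)$ equal to a product of distinct ramified primes, after which $m=\norm{\alpha}$ is automatically squarefree and a divisor of $\disc(k)$. Once this ambiguous-ideal bookkeeping is in place, assertions $(1)$--$(5)$ are immediate from \eqref{eq:meps} and \eqref{eq:Normepsplus1}.
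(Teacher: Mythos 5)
Your proposal is correct and follows essentially the same route as the paper: Hilbert 90 with $\alpha=\sigma(\epsilon)+1$, normalization of the ambiguous ideal $(\alpha)$ to a product of distinct ramified primes, $m=\alpha\,\sigma(\alpha)$, and the identities $m\epsilon=\sigma(\alpha)^2$ and $m\,\norm{\epsilon+1}=(\alpha+\sigma(\alpha))^2$. The only difference is that you spell out the ambiguous-ideal bookkeeping that the paper leaves implicit, which is a welcome but inessential elaboration.
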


From \eqref{eq:alphadef} we have
\begin{equation} \label{eq:epsplus1}
\epsilon - 1 = \dfrac{ \sigma (\alpha) }{\alpha} - 1 = \dfrac{ \sigma (\alpha) - \alpha }{\alpha},
\end{equation}
so
\begin{equation} \label{eq:Normepsminus1}
\norm{\epsilon - 1} = \dfrac{ -(\sigma (\alpha)- \alpha)^2 }{m}.
\end{equation}

\begin{remark}
We have $(\sigma (\alpha)- \alpha)^2 = D \cdot [ \OO_K : \ZZ[\alpha] ]^2$ where $D$ is the discriminant of $k$ 
(note the right hand side is positive, as is the left hand side). Then
\begin{equation} 
-\norm{\epsilon - 1} = \dfrac{D}{m} \cdot [ \OO_K : \ZZ[\alpha] ]^2 ,
\end{equation}
which proves that the squarefree part of $D/m$ is the squarefree part of $-\norm{\epsilon - 1}$.
\end{remark}

By \eqref{eq:Normepsplus1}, $\norm{\epsilon + 1}$ is a positive integer whose positive square root is given by 
\begin{equation} \label{eq:sqrtNplus1}
\sqrt{ \norm{\epsilon + 1} } = \dfrac{\sigma (\alpha) + \alpha }{\sqrt m} .
\end{equation}

Similarly, by \eqref{eq:Normepsminus1},  $-\norm{\epsilon - 1}$ is a positive integer whose positive square root is given by 
\begin{equation}  \label{eq:sqrtNminus1}
\sqrt{ - \norm{\epsilon - 1} } = \dfrac{\sigma (\alpha) - \alpha}{\sqrt m} 
\end{equation}
(recall that $\sigma(\alpha) > \alpha$ in the embedding where $\sqrt d > 0$).

Write the normalized element $\alpha$ satisfying \eqref{eq:alphadef} as
\begin{equation} \label{eq:alpha}
\alpha = A + B \sqrt d .
\end{equation}
Then $\sigma(\alpha) > \alpha > 0$ and the definition of $m$ gives
\begin{equation} \label{eq:ABproperties}
A > 0 \ , \  B < 0\text{ and } A^2 - d B^2 = m.
\end{equation}

By \eqref{eq:sqrtNplus1} and 
\eqref{eq:sqrtNminus1}, $A$ is $1/2$ of the positive square root of (the positive square integer) $m \, \norm{\epsilon + 1}$ and 
$B$ is $1/2$ of the negative square root of 
(the positive square integer) $- m \, \norm{\epsilon - 1}/d$.  Hence $m$, $A$, and $B$ are easily computed once
$\epsilon$ is determined (by a continued fraction algorithm, for example).

\begin{proposition} \label{prop:sqrtepsilon}
Suppose the fundamental unit $\epsilon$ in $k = \QQ(\sqrt d)$ has norm $+1$ and $\alpha$, $m$ are as above.
If $A$ and $B$ are defined by \eqref{eq:alpha}, then $ A > 0$, $B < 0$, $A^2 - d B^2 = m$ and
\begin{equation} \label{eq:sqrtepsilon}
\sqrt{\epsilon} = \frac{1}{\sqrt m} (A - B \sqrt{d})
\end{equation}
(all square roots positive). 
\end{proposition}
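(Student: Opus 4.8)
The plan is to read the desired identity essentially off equation \eqref{eq:meps}, and then to fix all the ambiguous signs using the normalization of $\alpha$ already recorded in \eqref{eq:ABproperties}. First I would note that writing $\alpha = A + B\sqrt d$ as in \eqref{eq:alpha} immediately gives $\sigma(\alpha) = A - B\sqrt d$, since $\sigma$ is the nontrivial automorphism of $k$ and sends $\sqrt d$ to $-\sqrt d$. The three asserted inequalities $A > 0$, $B < 0$, together with $A^2 - dB^2 = m$, are precisely the content of \eqref{eq:ABproperties}; the last of these is just the expansion $\alpha\,\sigma(\alpha) = (A + B\sqrt d)(A - B\sqrt d) = A^2 - dB^2$ combined with the defining equation \eqref{eq:mdef}.

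The heart of the matter is equation \eqref{eq:meps}, which states $m\epsilon = \sigma(\alpha)^2$. Taking square roots shows that $\sqrt m\,\sqrt{\epsilon}$ equals $\pm\,\sigma(\alpha) = \pm(A - B\sqrt d)$, so that $\sqrt{\epsilon} = \pm\frac{1}{\sqrt m}(A - B\sqrt d)$; the only remaining task is to check that the sign is $+$. To do this I would pass to the distinguished real embedding of $k$ in which $\sqrt d > 0$, where all the quantities in the statement become real numbers and every square root is understood to be the positive one. In this embedding $\epsilon > 1$, hence $\sqrt{\epsilon} > 0$, while $\sigma(\alpha) = A - B\sqrt d > 0$ because $A > 0$ and $B < 0$. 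With $\sqrt m > 0$ as well, both sides of $\sqrt m\,\sqrt{\epsilon} = \pm\,\sigma(\alpha)$ are forced to be positive, which selects the $+$ sign and yields \eqref{eq:sqrtepsilon}. One can confirm the result directly by squaring: $\bigl(\frac{1}{\sqrt m}(A - B\sqrt d)\bigr)^2 = \sigma(\alpha)^2/m = \epsilon$ by \eqref{eq:meps}.

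I do not expect any genuine obstacle here beyond careful bookkeeping, since the entire content is packaged in \eqref{eq:meps}; the one subtlety is the correlated choice of positive square roots of $m$, $\epsilon$, and $d$. The point that most merits care is that $\sqrt{\epsilon}$ itself need not lie in $k$ (indeed $\epsilon$ is a nonsquare in $k$), so the identity \eqref{eq:sqrtepsilon} should be read as an equality in the quadratic extension $k(\sqrt{\epsilon}) = k(\sqrt m)$. What does lie in $k$ is the product $\sqrt m\,\sqrt{\epsilon} = \sigma(\alpha) = A - B\sqrt d$, exactly as \eqref{eq:meps} guarantees, and it is this that makes the stated closed form meaningful.
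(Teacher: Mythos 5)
Your proof is correct and follows exactly the paper's (very terse) argument: the paper simply says the identity follows from \eqref{eq:meps} by taking positive square roots, and your write-up supplies the sign bookkeeping (using $A>0$, $B<0$ from \eqref{eq:ABproperties} and $\epsilon>1$ in the distinguished embedding) that the paper leaves implicit. No gaps.
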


\begin{proof}
This follows from \eqref{eq:meps} by taking (positive) square roots.
\end{proof}

\begin{remark}
The integer $m = m_\epsilon$ in Proposition \ref{prop:m} can be defined for
any (not necessarily fundamental) unit $\epsilon$ in $k$ whose norm is $+1$, and the resulting $m_\epsilon$ satisfies the 
properties in Proposition \ref{prop:m} with 
the exception that $m_\epsilon$ could be 1 or $d$.  If the unit $\epsilon$ also satisfies $\epsilon > 1$, then
the formula for $\sqrt{\epsilon}$ in Proposition \ref{prop:sqrtepsilon} also holds.  

\end{remark}

\section{Some applications}


We give some applications of the existence of the integer $m = m_{\epsilon}$ for units $\epsilon$ of norm $+1$ in 
Proposition \ref{prop:m} (which could loosely be referred to as the `$m$-technology').  
The first is an elementary proof of a result of Dirichlet.  

\begin{proposition} \label{prop:p}
Suppose $p$ is a prime $\equiv 1$ mod 4.  If $\epsilon$ denotes the
fundamental unit of $k = \QQ(\sqrt{p})$, then $\text{Norm}_{k/\QQ} (\epsilon) = -1$.
\end{proposition}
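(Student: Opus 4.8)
The plan is to argue by contradiction, exploiting the ``$m$-technology'' packaged in Proposition \ref{prop:m}. Suppose, contrary to the assertion, that $\norm{\epsilon} = +1$. Then Proposition \ref{prop:m} applies and produces the squarefree integer $m$ (the squarefree part of the positive integer $\norm{\epsilon + 1}$) satisfying $m > 1$, $m \ne d$, and $m \mid \disc(k)$.

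The crucial input is the value of the discriminant. For $k = \QQ(\sqrt p)$ with $p$ a prime $\equiv 1 \pmod 4$, the radicand $d = p$ is squarefree and congruent to $1$ modulo $4$, so $\disc(k) = p$ rather than $4p$. Now $m$ must divide $p$, forcing $m \in \{1, p\}$; but Proposition \ref{prop:m} simultaneously demands $m > 1$ (ruling out $m = 1$) and $m \ne d = p$ (ruling out $m = p$). These constraints cannot all hold, so the assumption $\norm{\epsilon} = +1$ is untenable. Since the norm of a unit in a real quadratic field is $\pm 1$, we conclude $\norm{\epsilon} = -1$.

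There is no genuinely hard step here: the entire argument reduces to observing that the set of admissible values for $m$ --- squarefree divisors of $\disc(k)$ that are strictly greater than $1$ and not equal to $d$ --- is empty when the discriminant is the prime $p$ itself. The one point that merits care is the discriminant computation, where the hypothesis $p \equiv 1 \pmod 4$ is exactly what guarantees $\disc(k) = p$; it is this congruence that renders the divisibility condition $m \mid \disc(k)$ so restrictive, and the whole proposition would fail without it (for instance when $p \equiv 3 \pmod 4$ the discriminant $4p$ admits the proper divisor $m = 2$).
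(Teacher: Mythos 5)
Your proof is correct and is essentially identical to the paper's: both argue by contradiction that if $\norm{\epsilon}=+1$ then the integer $m=m_\epsilon$ of Proposition \ref{prop:m} must divide $p$ while being neither $1$ nor $p$, which is impossible. You simply make explicit the discriminant computation ($\disc(k)=p$ when $p\equiv 1 \pmod 4$) that the paper leaves implicit, which is a reasonable bit of added care.
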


\begin{proof}
By way of contradiction, suppose $\text{Norm}_{k/\QQ} (\epsilon) = +1$. Then the integer $m = m_\epsilon$ above
divides $p$ and is neither 1 nor $p$, which is impossible.  
\end{proof}

The next result is also due to Dirichlet and, together with the previous proposition,
provides infinitely many examples of real biquadratic fields
all of whose subfields have a fundamental unit of norm $-1$.

\begin{proposition} \label{prop:p1p2}
Suppose that $p_1$ and $p_2$ are distinct primes with $p_2 \equiv 1$ mod 4 and with $p_1 = 2$ or $p_1 \equiv 1$ mod 4,
and let $\epsilon$ be the fundamental unit of $k = \QQ(\sqrt{p_1 p_2})$.
If $(\frac{p_1}{p_2}) = -1$ then $\text{Norm}_{k/\QQ} (\epsilon) = -1$.
\end{proposition}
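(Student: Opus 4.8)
The plan is to argue by contradiction using the ``$m$-technology'' of Proposition \ref{prop:m}, just as in the proof of Proposition \ref{prop:p}, but now squeezing arithmetic information out of the fact that $m$ is the norm of an integer, via a short quadratic-residue computation.

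First I would suppose $\norm{\epsilon} = +1$ and let $m = m_\epsilon$ be the integer supplied by Proposition \ref{prop:m}, so that $m$ is squarefree, $m > 1$, $m \ne d = p_1 p_2$, $m \mid \disc(k)$, and $m$ is the norm of an integer $\alpha \in \OO_k$. The first step is to pin down $m$. When $p_1 \equiv 1 \pmod 4$ we have $d \equiv 1 \pmod 4$ and $\disc(k) = p_1 p_2$, while when $p_1 = 2$ we have $\disc(k) = 8 p_2$; in both cases the only squarefree divisors of $\disc(k)$ that are $>1$ and $\ne d$ are $p_1$ and $p_2$, so $m \in \{p_1, p_2\}$. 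Since $2\alpha \in \ZZ[\sqrt d]$ in either ring of integers, writing $2\alpha = a + b\sqrt d$ with $a,b \in \ZZ$ turns the norm condition into the single integral equation $4m = a^2 - d b^2 = a^2 - p_1 p_2 b^2$, which I would then test against each candidate value of $m$.

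Next I would feed each value into this equation and reduce modulo the ``other'' prime. If $m = p_1$ (which includes $p_1 = 2$), then $p_1 \mid a$; after cancelling a factor of $p_1$ and reducing modulo $p_2$ the equation forces $p_1$ to be a square modulo $p_2$, that is $\left(\tfrac{p_1}{p_2}\right) = +1$, contradicting the hypothesis. If $m = p_2$ with $p_1$ odd, the symmetric manipulation modulo $p_1$ shows $p_2$ is a square modulo $p_1$, so $\left(\tfrac{p_2}{p_1}\right) = +1$; since both primes are $\equiv 1 \pmod 4$, quadratic reciprocity gives $\left(\tfrac{p_1}{p_2}\right) = \left(\tfrac{p_2}{p_1}\right) = +1$, again a contradiction. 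In the final case $m = p_2$ with $p_1 = 2$, the reduction instead produces a solution of $b^2 \equiv -2 \pmod{p_2}$, so that $-2$ is a square modulo $p_2$; combined with $\left(\tfrac{-1}{p_2}\right) = +1$ (as $p_2 \equiv 1 \pmod 4$) this yields $\left(\tfrac{2}{p_2}\right) = +1$, contradicting $\left(\tfrac{p_1}{p_2}\right) = \left(\tfrac{2}{p_2}\right) = -1$. Each case thus contradicts the assumption, so $\norm{\epsilon} = -1$.

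The conceptual input is slight: once Proposition \ref{prop:m} confines $m$ to the two ramified primes, ``$m$ is a norm'' becomes exactly a quadratic-residue condition. I expect the only real care to lie in the bookkeeping across the three sub-cases — writing the norm relation uniformly as $4m = a^2 - d b^2$ so that both possible rings of integers are handled at once, checking in each reduction that the surviving variable is a unit modulo the prime in question (so that the residue symbol is genuinely $+1$ rather than a vacuous $0 \equiv 0$), and invoking quadratic reciprocity together with the supplementary law $\left(\tfrac{-1}{p_2}\right) = +1$ to rephrase every case as a statement about $\left(\tfrac{p_1}{p_2}\right)$.
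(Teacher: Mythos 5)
Your proposal is correct and follows essentially the same route as the paper: assume $\mathrm{Norm}_{k/\QQ}(\epsilon)=+1$, use Proposition \ref{prop:m} to force $m=m_\epsilon\in\{p_1,p_2\}$, and rule out each value by showing the norm equation $a^2-p_1p_2b^2=4m$ yields a quadratic-residue contradiction. The only (harmless) difference is in the sub-case $m=p_2$ with $p_1$ odd, where you reduce modulo $p_1$ and invoke quadratic reciprocity, whereas the paper pulls the factor $p_2$ out of $a$ and reduces modulo $p_2$, concluding that $-p_1$ would be a square mod $p_2$ and thereby avoiding reciprocity altogether.
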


\begin{proof}
By way of contradiction, suppose $\text{Norm}_{k/\QQ} (\epsilon) = +1$. Then the integer $m = m_\epsilon$ above
divides $p_1 p_2$ and is neither 1 nor $p_1 p_2$, so $m = p_1$ or $p_2$.  If $ m = p_1$, then $p_1$
would be the norm of an integer from $k$, so $a^2 - p_1 p_2 b^2= 4 p_1$ (or $a^2 - 2 p_2 b^2 = 2$ if
$p_1 = 2$) would have integral solutions, which contradicts the fact
that $p_1$ is not a square mod $p_2$. If $ m = p_2$, then $a^2 - p_1 p_2 b^2= 4 p_2$ (or $a^2 - 2 p_2 b^2 = p_2$ if $p_1 = 2$)
would have integral solutions, so $p_2$ would divide $a$ and then
$p_2 (a')^2 - p_1  b^2 = 4 $ (or $p_2 (a')^2 - 2  b^2 = 1$ if $p_1 = 2$) would have integral solutions,
contradicting the fact that $-p_1$ is also not a square mod $p_2$.  
Hence $\text{Norm}_{k/\QQ} (\epsilon) = +1$
is impossible, completing the proof.
\end{proof}

The final application relates to the genus theory for real quadratic fields.  

Suppose $k = \QQ(\sqrt d)$ is a real quadratic field ($d> 1$ a squarefee integer) with fundamental unit $\epsilon$ and discriminant $D$.
If $D$ is divisible by
$t$ distinct primes then by genus theory the subgroup $C_k^+[2]$ of elements of order dividing 2 in the 
strict class group $C_k^+$ of $k$ is isomorphic to $(\ZZ/2\ZZ)^{t-1}$, i.e., the 2-rank of 
$C_k^+$ is $t-1$.  The group $C_k^+[2]$ is generated by 
the $t$ classes of the ramified primes, which satisfy a single relation.

\begin{proposition}
Suppose $k$ is a real quadratic field as above. Then the unique relation
among the classes of the ramified primes in the strict class group of $k$ is given as follows:

\begin{enumerate}

\item[(a)]
if $\text{Norm}_{k/\QQ} (\epsilon) = -1$, the product of the classes of the primes 
$\frak p$ that divide $d$ is 1, and

\item[(b)]
if $\text{Norm}_{k/\QQ} (\epsilon) = +1$, the product of the classes of the primes 
$\frak p$ that divide $m$ is 1, where $m = m_{\epsilon}$ is the positive integer associated to $\epsilon$ in 
Proposition \ref{prop:m}.

\end{enumerate}

\end{proposition}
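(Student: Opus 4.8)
The plan is to identify the single genus-theoretic relation by exhibiting, in each case, an explicit \emph{totally positive} generator of an ambiguous ideal that is a product of distinct ramified primes. The structural input is already supplied: the $t$ classes of the ramified primes generate $C_k^+[2] \cong (\ZZ/2\ZZ)^{t-1}$, so the kernel of the surjection $\FF_2^t \to C_k^+[2]$ sending the standard basis to these classes is one-dimensional. Consequently there is a \emph{unique} nonempty subset $S$ of the ramified primes for which $\prod_{\mathfrak p \in S} \mathfrak p$ is trivial in the strict class group, and ``the relation'' is exactly $\prod_{\mathfrak p \in S}[\mathfrak p] = 1$. Thus it suffices to produce, in each case, one nontrivial such relation; by one-dimensionality it must be \emph{the} relation.

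For case (a), where $\norm{\epsilon} = -1$, I would use the element $\sqrt d$. First I would check that $(\sqrt d) = \prod_{\mathfrak p \mid d} \mathfrak p$, the product of the ramified primes dividing $d$ each to the first power; this is a short local computation using $\ord_{\mathfrak p}(\sqrt d) = v_p(d)$ at the ramified prime $\mathfrak p$ over $p$. Now $\sqrt d$ has signature $(+,-)$, so $(\sqrt d)$ is principal but not obviously trivial in $C_k^+$; however a fundamental unit of norm $-1$ satisfies $\epsilon\,\sigma(\epsilon) = -1$ and hence has mixed signature, so multiplying $\sqrt d$ by $\epsilon$ (and, if necessary, by $-1$) yields a totally positive generator. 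Therefore $\prod_{\mathfrak p \mid d}[\mathfrak p] = 1$ in $C_k^+$, and since $d > 1$ this subset is nonempty and so gives the unique relation.

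For case (b), where $\norm{\epsilon} = +1$, I would invoke the $m$-technology directly. The normalized element $\alpha$ from \eqref{eq:alphadef} is totally positive, and $(\alpha)$ is a product of distinct ramified primes with $\norm{\alpha} = m$. Since $m$ is squarefree, comparing norms forces $(\alpha) = \prod_{\mathfrak p \mid m} \mathfrak p$. Because $\alpha$ is totally positive, this ideal is trivial in $C_k^+$, giving $\prod_{\mathfrak p \mid m}[\mathfrak p] = 1$; and since $m > 1$ by Proposition \ref{prop:m}, the subset is nonempty and is therefore the unique relation. It is worth remarking that in case (b) the ideal $(\sqrt d)$ does \emph{not} serve this purpose: with no unit of mixed signature available, every generator of $(\sqrt d)$ stays of signature $(+,-)$, and indeed $m \ne d$ confirms that the two relations are genuinely distinct.

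The step I would be most careful with is the behavior at the prime $2$: one must confirm that in every residue case of $d \bmod 4$ the factorizations $(\sqrt d) = \prod_{\mathfrak p \mid d}\mathfrak p$ and $(\alpha) = \prod_{\mathfrak p \mid m}\mathfrak p$ hold with each exponent equal to one, and in particular (when $d \equiv 3 \bmod 4$) that the prime above $2$ ramifies yet does not divide $(\sqrt d)$, so that the relation in (a) correctly ranges over the primes dividing $d$ rather than over \emph{all} ramified primes. Everything else is routine once the one-dimensionality of the relation space and the total positivity of the chosen generator are in hand.
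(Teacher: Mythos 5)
Your proposal is correct and follows essentially the same route as the paper: exhibit the totally positive generator $\epsilon\sqrt d$ of $\prod_{\mathfrak p \mid d}\mathfrak p$ in case (a) and the totally positive $\alpha$ with $(\alpha)=\prod_{\mathfrak p\mid m}\mathfrak p$ in case (b), then appeal to the uniqueness of the relation from genus theory. The extra care you take with the factorization at $2$ and with the one-dimensionality of the relation space is sound but only makes explicit what the paper's surrounding discussion already supplies.
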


\begin{proof}
If $\text{Norm}_{k/\QQ} (\epsilon) = -1$ then the principal ideal $(\sqrt{d})$ has $\epsilon \sqrt d$ as a 
totally positive generator, hence is trivial in the strict class group.
Since $(\sqrt{d})$ is the product of the primes $\frak p$ that divide $d$, this proves (a). 

If $\text{Norm}_{k/\QQ} (\epsilon) = +1$ and $m = m_\epsilon$, then 
$m =  \text{Norm}_{k/\QQ} (\alpha)$ as in \eqref{eq:mdef}.  
Then the principal ideal $(\alpha)$ is the product of the primes $\frak p$ that divide $m$ and 
since $\alpha$ is totally positive this product is trivial in the strict class group, which is (b).
\end{proof}

\begin{remark}
With notation as above, the 2-rank of the ordinary class group $C_k$ of $k$ is either $t-1$ or $t-2$, with the latter occuring
if and only if $d$ is divisible by a prime $q \equiv 3$ mod 4.  The principal ideal $(\sqrt d)$ is
trivial in $C_k$, so the product of the classes of the (ramified) primes $\frak p$ that divide $d$ is always
trivial.  

There are three possibilities:

\begin{enumerate}

\item
There is no element $\omega$ in $k$ with norm $-1$ (i.e., $d$ is divisible by a prime $q \equiv 3$ mod 4).  
Then $C_k^+ = (\ZZ / 2 \ZZ) \oplus C_k$ and the 2-rank of $C_k$ is $t-2$.  The subgroup $C_k[2]$ is
generated by the classes of the ramified primes, with two independent relations: the product of
the classes of the primes $\frak p$ that divide $d$ and the product of the classes of
the primes dividing $m = m_\epsilon$ are both equal to 1.

\item
There is an element $\omega$ in $k$ with norm $-1$ (i.e., $d$ is not divisible by any prime $q \equiv 3$ mod 4)
but $\epsilon$ has norm $+1$ (example: $d = 34$).  In this case $C_k$ and $C_k^+$ have the same 2-rank $t-1$ 
(but $\vert C_k^+ \vert = 2 \vert C_k \vert$).  The group $C_k[2]$ requires one generator in 
addition to the classes of the ramified primes:  since $\text{Norm}_{k/\QQ} (\omega) = -1$, the
principal ideal $(\omega)$ can be written in the form $\sigma {\frak a} / \frak a $ for some fractional
ideal $\frak a$, and the class of $\frak a$ in $C_k$ gives an element of order 2 not in the subgroup
generated by the classes of the ramified primes ($\frak a$ defines an ambiguous ideal class but is not
equivalent to an ambiguous ideal).  As in (1) there are two independent relations among the classes of
the ramified primes: the product of
the classes of the primes $\frak p$ that divide $d$ and the product of the classes of
the primes dividing $m = m_\epsilon$ are both equal to 1.

\item
If $\epsilon$ has norm $-1$, then $C_k = C_k^+$, so has 2-rank $t-1$ with $C_k[2]$ generated by the
classes of the ramified primes with unique relation that the product of the classes of the primes
dividing $d$ is equal to 1.

\end{enumerate}

\end{remark}

\section{Real biquadratic fields whose quadratic subfields all have deficiency 0}  \label{sec:deficiency0}

In this section we suppose that 
$K$ is a real biquadratic extension of $\QQ$ having quadratic subfields $k_1$, $k_2$, $k_3$, with corresponding
fundamental units $\epsilon_1$, $\epsilon_2$ and $\epsilon_3$, each of which has norm $-1$ (that is,  
the units of $k_1$, $k_2$, and $k_3$ have all possible signatures).  

In this case, the matrix of signatures (viewed additively) of $\{ -1, \epsilon_1, \epsilon_2, \epsilon_3 \}$ in $K$ is
\begin{equation} \label{eq:sigmatrix4}
\begin{pmatrix}
1 & 1 & 1 & 1 \\
0 & 1 & 0 & 1 \\
0 & 0 & 1 & 1 \\
0 & 1 & 1 & 0
\end{pmatrix},
\end{equation}
which has rank 3.  Hence the deficiency of $K$ is 0 or 1 (that is, the signature rank of the units is 4 or 3).  

By Proposition \ref{prop:unitstructure}, a set of fundamental units of $K$ 
is given either by 
$\{ \epsilon_1, \epsilon_2,\epsilon_3 \}$ or $\{ \epsilon_1,\epsilon_2 , \sqrt{ \epsilon_1 \epsilon_2 \epsilon_3 }, \}$,
depending on whether $ \epsilon_1 \epsilon_2 \epsilon_3$ is a square in $K$.  

If $\eta = \epsilon_1 \epsilon_2 \epsilon_3$ and $\sqrt \eta \in K$, then Kubota shows (\cite[Hilfsatz 3]{Kub}) that
$\nnorm{K}{\sqrt \eta} = -1$, whose proof we briefly recall since it will be useful later in the proof of 
Theorem \ref{thm:def0rank3}. 
If $\sigma$ is the nontrivial automorphism of $K$ fixing $k_1$, then $\eta^{1 + \sigma} =  \epsilon_1^2 (-1)(-1) = \epsilon_1^2$
since $\sigma$ acts non trivially on the units $\epsilon_2$ and $\epsilon_3$, each of which has norm $-1$.  Hence
$ \sqrt{\eta}^{\, \sigma} = (-1)^{\nu_1} \epsilon_1 / \sqrt{\eta}$ for some $\nu_1$.  Similarly 
$ \sqrt{\eta}^{\, \tau} = (-1)^{\nu_2} \epsilon_2 / \sqrt{\eta}$ for some $\nu_2$.  Then
\begin{equation} \label{eq:conjugateeta}
\begin{aligned}
 \sqrt{\eta}^{\, \sigma \tau} & = ((-1)^{\nu_1} \epsilon_1 / \sqrt{\eta})^\tau 
  = (-1)^{\nu_1} (- 1/\epsilon_1) (-1)^{\nu_2} \sqrt{\eta}/\epsilon_2 \\
& = (-1)^{\nu_1 + \nu_2 + 1} \sqrt{\eta} /(\epsilon_1\epsilon_2) = (-1)^{\nu_1 + \nu_2 + 1} \epsilon_3 / \sqrt{\eta}.
\end{aligned}
\end{equation} 
Taking the product of the conjugates of $\sqrt{\eta}$ gives $-1$, which shows $\nnorm{K}{\sqrt \eta} = -1$.

It follows that if $ \sqrt{\epsilon_1 \epsilon_2 \epsilon_3} \in K$ then 
an odd number of conjugates of this element are negative, so its signature is not contained in the 
group of signatures generated by $\epsilon_1, \epsilon_2, \epsilon_3$ in \eqref{eq:sigmatrix4}, so $K$ has signature rank 4.
This shows that which of the two possibilities in (c) of Proposition \ref{prop:unitstructure} occurs is determined
by the unit signature rank of $K$.  We record this in the following proposition.

\begin{proposition} \label{prop:equivalence}
Suppose $K$ is a real biquadratic field all of whose quadratic subfields have deficiency 0 (that is, 
their fundamental units $\epsilon_1$, $\epsilon_2$, and $\epsilon_3$ have norm $-1$). Then

\begin{enumerate}

\item
$ E_K = \gp{-1, \epsilon_1 , \epsilon_2, \epsilon_3} $ if and only if the unit signature rank of
$K$ is 3, or, equivalently, 

\item
$ E_K =  \gp{-1, \epsilon_1 , \epsilon_2, \sqrt{ \epsilon_1 \epsilon_2 \epsilon_3} } $ 
if and only if the unit signature rank of $K$ is 4.

\end{enumerate}

\end{proposition}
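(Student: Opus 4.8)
The plan is to combine the structural dichotomy of Proposition~\ref{prop:unitstructure}(c) with the norm computation carried out just above the statement. By that proposition the unit group $E_K$ is exactly one of $\gp{-1,\epsilon_1,\epsilon_2,\epsilon_3}$ or $\gp{-1,\epsilon_1,\epsilon_2,\sqrt{\epsilon_1\epsilon_2\epsilon_3}}$, and the two possibilities are mutually exclusive and exhaustive, distinguished precisely by whether $\sqrt{\eta}$ lies in $K$, where $\eta=\epsilon_1\epsilon_2\epsilon_3$. It therefore suffices to prove that the first case forces unit signature rank $3$ and the second forces unit signature rank $4$; the stated equivalences (1) and (2) are then immediate.

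In the first case the unit signature group is the subspace of $\FF_2^4$ spanned by the rows of the matrix \eqref{eq:sigmatrix4}, which has rank $3$, so the signature rank of $K$ is $3$. For the second case I would first record the parity invariant that drives the argument: for any $0\ne u\in K$, the number of negative entries of the signature of $u$ is even if and only if $\normK{u}>0$, that is, precisely when the signature of $u$ lies in the even-weight subspace of $\FF_2^4$ (the kernel of the sum-of-coordinates map). Each of the generators $-1,\epsilon_1,\epsilon_2,\epsilon_3$ has positive norm from $K$ to $\QQ$ (indeed $\normK{\epsilon_i}=\nnorm{k_i}{\epsilon_i}^2=1$), so every signature appearing in \eqref{eq:sigmatrix4} has even weight, as one sees directly from the matrix. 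By the computation preceding the statement (due to Kubota), however, $\normK{\sqrt{\eta}}=-1$, so the signature of $\sqrt{\eta}$ has \emph{odd} weight and hence cannot lie in the even-weight subspace, which contains the rank-$3$ span of \eqref{eq:sigmatrix4}. Adjoining a vector independent of a rank-$3$ subspace raises the rank by exactly one, so in this case $K$ has signature rank $4$.

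The main point to note is that almost all of the substantive content is already in place: the norm identity $\normK{\sqrt{\eta}}=-1$ was established above, and the rank of \eqref{eq:sigmatrix4} is a one-line linear-algebra check. The only step requiring any care is the clean formulation of the sign-of-norm/parity correspondence together with the observation that the odd-weight signature of $\sqrt{\eta}$ is genuinely independent of the three even-weight signatures, so that the rank increases from $3$ to $4$ rather than remaining at $3$; the parity obstruction supplies this independence at once.
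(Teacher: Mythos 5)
Your proposal is correct and follows essentially the same route as the paper: the paper likewise derives the rank-$3$ bound from the matrix \eqref{eq:sigmatrix4}, invokes the dichotomy of Proposition \ref{prop:unitstructure}(c), and uses Kubota's computation $\normK{\sqrt{\eta}}=-1$ to conclude that $\sqrt{\eta}$ has an odd number of negative conjugates and hence a signature independent of the even-weight span of \eqref{eq:sigmatrix4}. Your explicit formulation of the norm-sign/parity correspondence is just a slightly more formal packaging of the same observation.
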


The following two theorems provide infinitely many examples of each possibility in Proposition \ref{prop:equivalence}.
We begin with unit signature rank 4.

\eject

\begin{theorem} \label{thm:def0rank4}
Suppose that $p_1$ and $p_2$ are distinct primes with $p_2 \equiv 1$ mod 4 and with $p_1 = 2$ or $p_1 \equiv 1$ mod 4.
If the fundamental unit $\epsilon$ of $k = \QQ(\sqrt{p_1 p_2})$, satisfies $\text{Norm}_{k/\QQ} (\epsilon) = -1$
(for example, if $(\frac{p_1}{p_2}) = -1$, by Proposition \ref{prop:p1p2}), then 
the unit signature rank of $K = Q(\sqrt{p_1},\sqrt{p_2})$ is 4, that is, $K$ has deficiency 0.   
A set of fundamental units for
$K$ is given by $\{ \epsilon_1, \epsilon_2, \sqrt{ \epsilon_1 \epsilon_2 \epsilon_3 } \}$
where $\epsilon_1$, $\epsilon_2$, $\epsilon_3$ are the fundamental units for the three quadratic subfields of $K$.
\end{theorem}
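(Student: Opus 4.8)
The plan is to reduce the whole statement to the single assertion that $\eta \colonequals \epsilon_1\epsilon_2\epsilon_3$ is a square in $K$, where $k_1=\QQ(\sqrt{p_1})$, $k_2=\QQ(\sqrt{p_2})$, $k_3=\QQ(\sqrt{p_1p_2})$ are the quadratic subfields with fundamental units $\epsilon_1,\epsilon_2,\epsilon_3$. First I would check that all three units have norm $-1$: for $\epsilon_2$ by Proposition~\ref{prop:p} (since $p_2\equiv1\bmod4$), for $\epsilon_1$ by Proposition~\ref{prop:p} when $p_1\equiv1\bmod4$ and from $1+\sqrt2$ when $p_1=2$, and for $\epsilon_3$ by the hypothesis $\nnorm{k}{\epsilon}=-1$ (which Proposition~\ref{prop:p1p2} supplies when $(\frac{p_1}{p_2})=-1$). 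We are therefore in case~(c) of Proposition~\ref{prop:unitstructure}, so $E_K$ is one of the two listed groups, and by Proposition~\ref{prop:equivalence} the signature rank of $K$ equals $4$ exactly when $\sqrt\eta\in K$, in which case $\{\epsilon_1,\epsilon_2,\sqrt\eta\}$ is a fundamental system. Thus it suffices to prove $\eta\in K^{*2}$.

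It is worth recording an equivalent form of the goal. The rows of \eqref{eq:sigmatrix4} all have even weight, and since the matrix has rank $3$ they span the entire three-dimensional even-weight subspace of $\FF_2^4$; hence $K$ has signature rank $4$ if and only if some unit has an odd number of negative conjugates, that is, if and only if $K$ contains a unit of norm $-1$ over $\QQ$. As every element of $\gp{-1,\epsilon_1,\epsilon_2,\epsilon_3}$ has $\nnorm{K}{\cdot}=+1$ (each $\epsilon_i$ satisfies $\nnorm{K}{\epsilon_i}=\nnorm{k_i}{\epsilon_i}^2=+1$), such a unit can exist only if $E_K\supsetneq\gp{-1,\epsilon_1,\epsilon_2,\epsilon_3}$, i.e.\ only if $\sqrt\eta\in K$; and the recalled computation \eqref{eq:conjugateeta} shows that then $\nnorm{K}{\sqrt\eta}=-1$. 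So producing a norm $-1$ unit and producing $\sqrt\eta$ are the same problem, and either settles the theorem.

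The core step --- and the main obstacle --- is to establish $\eta\in K^{*2}$. The natural inputs are the relative-norm identities $\nnnorm{K}{k_1}{\eta}=\epsilon_1^2$, $\nnnorm{K}{k_2}{\eta}=\epsilon_2^2$, $\nnnorm{K}{k_3}{\eta}=\epsilon_3^2$; the first is the computation $\eta^{1+\sigma}=\epsilon_1^2$ recalled just before \eqref{eq:conjugateeta}, and the other two follow in the same way from $\nnorm{k_i}{\epsilon_i}=-1$. Together with the fact that $\eta$ is totally positive (its signature is the sum of rows $2,3,4$ of \eqref{eq:sigmatrix4}, which is $0$) and is a unit, these show that $\eta$ is a square in every archimedean completion and, after relative norm to each $k_i$, a square there as well. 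I would then verify that $\eta$ is a square in every finite completion $K_{\mathfrak P}$: at primes above odd rational primes this is a Hensel/residue-field condition that I expect to read off from the norm identities and the splitting of $p_1,p_2$ in $K$, while the dyadic primes demand a direct $2$-adic check.

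The delicate point is passing from ``everywhere locally a square'' to ``globally a square'': the obstruction lies in the $2$-torsion of the class group of $K$, because a nonsquare $\eta$ that is a local square everywhere would make $K(\sqrt\eta)/K$ an everywhere-unramified quadratic extension. I would rule this out using the genus theory of $K=\QQ(\sqrt{p_1},\sqrt{p_2})$ forced by the hypotheses; equivalently, one may phrase the core as a Hasse-norm computation for the cyclic extension $K/k_3$, showing $\epsilon_3$ is a global norm by checking $(\epsilon_3,p_1)_v=1$ at every place $v$ of $k_3$ (a product of Hilbert symbols made computable by $\nnorm{k}{\epsilon}=-1$ and $(\frac{p_1}{p_2})=-1$) and then descending to a unit $u$ with $\nnnorm{K}{k_3}{u}=\epsilon_3$, so that $\nnorm{K}{u}=-1$. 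Either route concentrates all the difficulty in the interplay between the dyadic place and the $2$-part of the class group, and that is where essentially all the work sits. Once $\eta\in K^{*2}$ is secured, Proposition~\ref{prop:equivalence} yields signature rank $4$ (deficiency $0$) with fundamental system $\{\epsilon_1,\epsilon_2,\sqrt{\epsilon_1\epsilon_2\epsilon_3}\}$, completing the proof.
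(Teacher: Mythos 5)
Your reductions are all sound: the norm computations for $\epsilon_1,\epsilon_2,\epsilon_3$, the appeal to Propositions \ref{prop:unitstructure} and \ref{prop:equivalence}, and the observation that the rows of \eqref{eq:sigmatrix4} span exactly the even-weight subspace of $\FF_2^4$, so that signature rank $4$ is equivalent to the existence of a unit of $K$ of norm $-1$, which in turn is equivalent to $\sqrt{\epsilon_1\epsilon_2\epsilon_3}\in K$. But that equivalence is already the content of Proposition \ref{prop:equivalence}; the theorem's entire substance is the verification that this actually happens, and that step you do not carry out. You describe two possible strategies (show $\eta$ is a square in every completion and kill the global obstruction via genus theory; or run a Hasse-norm computation for $K/k_3$ to produce a unit of relative norm $\epsilon_3$), and you yourself flag that the dyadic local computation and the interaction with the $2$-part of the class group are ``where essentially all the work sits.'' Neither computation is performed, and neither is routine: for instance, $K(\sqrt{\eta})/K$ being unramified at the odd primes is easy, but the behaviour at $2$ and the identification of the relevant quotient of the class group are exactly the nontrivial inputs. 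As written, the proposal is a plan with the central lemma left open.

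For comparison, the paper avoids all local analysis by arguing at the level of class field towers. Genus theory gives that the $2$-class group of $k=\QQ(\sqrt{p_1p_2})$ is cyclic, and $\nnorm{k}{\epsilon}=-1$ forces the strict and ordinary class groups of $k$ to coincide, so the strict $2$-class field $H_k^+$ is totally real; moreover the strict $2$-class field tower of $k$ stops at $H_k^+$ because a nontrivial second layer would have a nonabelian Galois group with cyclic commutator quotient. If $K$ (which lies in $H_k^+$, being abelian over $k$ and unramified at all finite primes) had positive deficiency, its strict Hilbert class field would properly contain its Hilbert class field, producing a non--totally-real abelian extension $L/K$ unramified at all finite primes; then $L/k$ would be unramified at all finite primes and hence contained in the (totally real) tower $H_k^+$, a contradiction. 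This single global argument replaces the everywhere-local square/norm verification you propose, and the fundamental-unit statement then falls out of Proposition \ref{prop:equivalence}. If you want to salvage your route, the Hasse-norm version is the more promising of your two options, but you would still need to compute the Hilbert symbols $(\epsilon_3,p_1)_v$ at the places above $2$ and at the ramified places, which is precisely the work you have deferred.
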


\begin{proof}
By genus theory, the 2-rank of the class group of $k$ is 1, that is, the 2-part of the class group of $k$ is
cyclic.  Since  $\text{Norm}_{k/\QQ} (\epsilon) = -1$, the class group and the strict class group of $k$ are
equal.  If $H_k^+$ denotes the strict 2-class field of $k$, then $H_k^+$ is a totally real cyclic extension of $k$
that contains the field $K = Q(\sqrt{p_1},\sqrt{p_2})$.  If the signature rank of $K$ were not 4, then $K$ would
have a nontrivial abelian extension $L$ that is not totally real and is unramified over $K$ at all finite primes.
Hence $L$ would be contained in the strict Hilbert 2-class field tower of $k$.  But the strict 2-class field tower of
$k$ is just $H_k^+$ (the tower terminates at the first layer because the Galois group over $k$ of a nontrivial 
second layer would be a nonabelian group with cyclic commutator quotient group).  Such an $L$ therefore 
contradicts the fact that $H_k^+$ is totally real, so the signature rank of $K$ is 4 and then the final statement follows
from (2) of Proposition \ref{prop:equivalence}.
\end{proof}

\begin{remark}
The examples of Theorem \ref{thm:def0rank4} in which $p_1$ and $p_2$ 
are both odd and 
satisfy $(\frac{p_1}{p_2}) = -1$ appear
in \cite{Kub}, where Kubota proved by a different method that they give examples where the units in 
the biquadratic field $K$ were given by case 2 of (c) in Proposition \ref{prop:unitstructure}.
This, together with Proposition \ref{prop:equivalence}, gives a slightly different proof that these
fields have unit signature rank 4, and in particular the class number
and the strict class number are equal since there are units of all possible signatures. 
\end{remark}

\begin{remark}
In fact the strict class number of the fields $K$ in Theorem \ref{thm:def0rank4}
is odd (so equal to the class number), as follows.
The extension $K/\QQ( \sqrt{p_1})$ is of degree 2, unramified outside the prime $(p_2)$, and totally ramified at
this prime, so its strict class number is odd if and only if the strict class number of $\QQ( \sqrt{p_1})$ is odd 
(see \cite[Lemma]{D1}), and $\QQ( \sqrt{p_1})$ has odd class number by genus theory.  
\end{remark}

\begin{remark}
The example $p_1 = 5$, $p_2 = 29$, with $\epsilon = 12 + \sqrt{145}$ 
(respectively, $p_1 = 2$, $p_2 = 41$, with $\epsilon = 9 + \sqrt{82}$)
shows there are
fields with $p_1 \equiv p_2 \equiv 1$ mod 4 
(respectively, $p_1 = 2$, $p_2 \equiv 1$ mod 4)
and $(\frac{p_1}{p_2}) = +1$ satisfying the hypotheses in Theorem \ref{thm:def0rank4}.
\end{remark}

\begin{remark}
When the hypothesis $\text{Norm}_{k/\QQ} (\epsilon) = -1$ in Theorem \ref{thm:def0rank4} is not satisfied, 
the unit signature rank of $K$ is 3: the deficiency of the quadratic subfield $k$ is 1 by assumption, so the
deficiency of $K$ is at least 1 (hence the unit signature rank of $K$ is at most 3) 
and the first three rows in \eqref{eq:sigmatrix4} give a matrix of rank 3.
\end{remark}

We now consider fields of unit signature rank 3 in Proposition \ref{prop:equivalence}.

\begin{theorem} \label{thm:def0rank3}
Suppose $n > 1$ is an integer with $n \not\equiv 2$ mod 5 such that $n^2+1$ and 
$(n+1)^2 + 1$ are both squarefree.  Let $K = \QQ (\sqrt{n^2 + 1}, \sqrt{(n+1)^2 + 1})$.  Then each of the 
fundamental units $\epsilon_1$, $\epsilon_2$, and $\epsilon_3$ of the three quadratic 
subfields of $K$ has norm $-1$ and the unit signature rank of $K$ is 3: a set of fundamental units for
$K$ is given by $\{ \epsilon_1, \epsilon_2,  \epsilon_3 \}$.
\end{theorem}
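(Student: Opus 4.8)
The plan is to establish the norm statement first and then, via Proposition~\ref{prop:equivalence}, reduce the rank assertion to the single claim that $\eta = \epsilon_1\epsilon_2\epsilon_3$ is not a square in $K$. Writing $d_1 = n^2+1$ and $d_2 = (n+1)^2+1$, the quadratic subfields are $k_1 = \QQ(\sqrt{d_1})$, $k_2 = \QQ(\sqrt{d_2})$ and $k_3 = \QQ(\sqrt{d_3})$, where $d_3$ is the squarefree part of $d_1 d_2$; the governing identity is
\[
 d_1 d_2 = (n^2+1)\big((n+1)^2+1\big) = (n^2+n+1)^2 + 1,
\]
so all three radicands have Richaud--Degert shape $N^2+1$. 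The continued fraction of $\sqrt{N^2+1}$ has period one, so $\epsilon_1 = n+\sqrt{d_1}$ and $\epsilon_2 = (n+1)+\sqrt{d_2}$ are fundamental units of norm $-1$; here the hypotheses $n>1$ and $n\not\equiv 2\pmod 5$ serve (among other things) to discard the single exceptional radicand $5$, for which $N+\sqrt{N^2+1}$ fails to be fundamental. For $k_3$ the element $u = (n^2+n+1)+\sqrt{d_1 d_2}$ is a unit of norm $-1$, and since any quadratic field admitting a unit of norm $-1$ has fundamental unit of norm $-1$, we get $\nnnorm{k_3}{\QQ}{\epsilon_3}=-1$ and $u = \epsilon_3^{\,j}$ with $j$ odd. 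This proves the first assertion and, by Proposition~\ref{prop:equivalence}, leaves only the claim $\eta\notin (K^\ast)^2$.

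Next I would replace $\eta$ by a completely explicit representative. Because $j$ is odd, $u\equiv\epsilon_3\pmod{(K^\ast)^2}$, so $\eta$ is a square in $K$ if and only if $\xi := \epsilon_1\epsilon_2 u$ is; expanding $\xi$ in the basis $\{1,\sqrt{d_1},\sqrt{d_2},\sqrt{d_1 d_2}\}$ gives an integral combination whose coefficients are explicit polynomials in $n$. Moreover $\xi$ is totally positive, since by \eqref{eq:sigmatrix4} the signatures of $\epsilon_1,\epsilon_2,\epsilon_3$ sum to $0$.

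To prove $\xi\notin(K^\ast)^2$ I would invoke the fact that an element of a number field which is a square in every completion is a square (there is no Grunwald--Wang obstruction for squares), so it suffices to produce a single place of $K$ at which $\xi$ is not a square. As $\xi$ is totally positive this place is finite, and as $\xi$ is a unit the condition at a prime $\mathfrak p$ of odd residue characteristic is simply that $\xi\bmod\mathfrak p$ be a quadratic non-residue in the residue field. A useful sanity check --- and an indication of where the difficulty lies --- is that the primes ramifying in $K$, in particular those above $5$, are \emph{never} such witnesses: at each of them $\xi$ turns out to be a local square. Thus I would instead seek a rational prime $p$ splitting completely in $K$, reduce the explicit expression for $\xi$ modulo a prime of $K$ above $p$, and verify that the residue is a non-square; the congruence $n\not\equiv 2\pmod 5$ should enter through the splitting of $5$ (equivalently the classes of $d_1,d_2$ modulo $5$ and modulo small auxiliary primes such as the divisors of $2n+1$, for which $d_1\equiv d_2 \pmod p$), and it is this hypothesis that should force the non-residue to occur. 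That it can fail is already visible at the excluded value $n=1$, where in fact $\sqrt{\eta}\in K$ and the rank is $4$.

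The main obstacle is precisely this last step: exhibiting the witness prime uniformly in $n$. Since no single prime, and none of the ramified primes, can serve, I anticipate constructing a suitable completely split prime $p$ by prescribing its residues (via the Chinese Remainder Theorem and Dirichlet's theorem) so that $d_1,d_2$ are squares while $\xi$ is forced to be a non-square modulo $p$, and then checking that $n>1$ and $n\not\equiv 2\pmod 5$ make this prescription consistent. Showing that these congruence conditions genuinely \emph{force} a non-residue, rather than merely being necessary --- or, alternatively, deducing $\eta\notin(K^\ast)^2$ from a class-number computation for $K$ establishing $h_K^{+} = 2\,h_K$ --- is the crux of the argument.
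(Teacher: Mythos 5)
Your first half is sound and matches the paper: the identity $(n^2+1)\bigl((n+1)^2+1\bigr) = N^2+1$ with $N = n^2+n+1$, the norm $-1$ statements for all three subfields, and the reduction via Proposition \ref{prop:equivalence} to the single claim $\eta = \epsilon_1\epsilon_2\epsilon_3 \notin (K^\ast)^2$. One correction of emphasis: the hypothesis $n \not\equiv 2 \pmod 5$ is not about discarding the radicand $5$; it is precisely what guarantees $\gcd(n^2+1,(n+1)^2+1)=1$ (via $(2n+3)(n^2+1)-(2n-1)((n+1)^2+1)=5$), hence that $N^2+1$ is itself squarefree, so that $k_3 = \QQ(\sqrt{N^2+1})$ and its fundamental unit is exactly $N+\sqrt{N^2+1}$ --- which is what makes the subsequent explicit computation possible.

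The second half is where the theorem lives, and there you have a genuine gap: you never prove that $\eta$ is not a square. The local--global principle for squares reduces the problem to exhibiting one finite place of $K$ at which $\eta$ is a local nonsquare, but the existence of such a place is, by Chebotarev applied to $K(\sqrt{\eta})/K$, \emph{equivalent} to the statement being proved, so nothing is gained until a witness is actually produced. Your proposed production of one --- prescribing residues of a completely split prime $p$ by CRT and Dirichlet so that $\eta$ is ``forced'' to be a non-residue --- is not carried out, and it is unclear how congruence conditions on $p$ alone could force the explicit expression for $\eta$ modulo a prime above $p$ to be a non-residue uniformly in $n$; you yourself flag this as the crux. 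The paper's route is different and entirely elementary: assuming $\sqrt{\eta} = x + y\sqrt{n^2+1} + z\sqrt{(n+1)^2+1} + w\sqrt{N^2+1}$ with rational coefficients, Kubota's relations $\nnnorm{K}{k_i}{\sqrt{\eta}} = (-1)^{\nu_i}\epsilon_i$ (equation \eqref{eq:norms}, which follows from \eqref{eq:conjugateeta} because all three norms are $-1$) yield six explicit polynomial equations in $x,y,z,w$; solving them in each of the four cases for $(\nu_1,\nu_2)$ forces $w$ to equal $\pm n/(\sqrt{2}\sqrt{n^2+1})$, $\pm 1/\sqrt{2}$, $N/(\sqrt{2}\sqrt{N^2+1})$, or $(n+1)/(\sqrt{2}\sqrt{(n+1)^2+1})$, none of which is rational since $n^2+1$, $(n+1)^2+1$, and $N^2+1$ are squarefree and greater than $2$. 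You would need to carry out an argument of this kind (or some other complete proof that $\eta\notin(K^\ast)^2$); as written, your proposal establishes only the norm assertion of the theorem.
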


\begin{proof}
For any positive integer $a$ such that $a^2 + 1$ is squarefree, the fundamental unit in the field
$\QQ(\sqrt{a^2 + 1})$ is $a + \sqrt{a^2 + 1}$, which has norm $-1$.  
We have $(n^2 + 1)[(n+1)^2 + 1] = N^2 + 1$ where $N = n(n+1) + 1$.  
Since $(2 n + 3)(n^2 + 1) - (2 n - 1) ((n+1)^2 + 1) = 5$, the greatest common divisor of
$n^2 + 1$ and $(n+1)^2 + 1$ divides 5, so equals 1 precisely when $n \not\equiv 2$ mod 5. 
Hence, if $n^2 + 1$ and $(n+1)^2 + 1$ are
squarefree and $n \not\equiv 2$ mod 5, then $N^2 + 1$ is also squarefree.  As a result, under
these hypotheses the three quadratic subfields of $K$ are $k_1 = \QQ(\sqrt{n^2 + 1})$,
$k_2 = \QQ(\sqrt{(n+1)^2 + 1})$ and $k_3 = \QQ(\sqrt{N^2 + 1})$, with 
fundamental units $\epsilon_1 = n + \sqrt{n^2 + 1}$, $\epsilon_2 = \sqrt{(n+1)^2 + 1}$, and 
$\epsilon_3 =  N + \sqrt{N^2 + 1}$, each with norm $-1$.    

By Propositions \ref{prop:unitstructure} and \ref{prop:equivalence} it remains to show 
$\eta = \epsilon_1 \epsilon_2 \epsilon_3$ is not a square in $K$.  As noted by Kubota
(see the discussion leading to equation \eqref{eq:conjugateeta}), if $\sqrt \eta \in K$,
then 
\begin{equation} \label{eq:norms}
\begin{aligned}
\nnnorm{K}{k_1}{\sqrt \eta} & = (-1)^{\nu_1} \epsilon_1 \\
\nnnorm{K}{k_2}{\sqrt \eta} & = (-1)^{\nu_2} \epsilon_2 \\
\nnnorm{K}{k_3}{\sqrt \eta} & = (-1)^{\nu_1 + \nu_2 + 1} \epsilon_3 \\
\end{aligned}
\end{equation}
for some integers $\nu_1, \nu_2 \in \{ 0, 1 \}$.

Writing $\eta = x + y \sqrt{n^2 + 1} + z \sqrt{(n+1)^2 + 1} + w  \sqrt{N^2 + 1} \in K$ with $N = n(n+1) + 1$ as above,
equation \eqref{eq:norms} gives the following six equations:
\begin{equation*} \label{eq:normsexpanded}
\begin{aligned}
(-1)^{\nu_1} n & = x^2 + y^2 - 2 z^2 - 2 w^2 - 2 z^2 n - 2 w^2 n + y^2 n^2 -  z^2 n^2 - 3 w^2 n^2 - 2 w^2 n^3 - w^2 n^4  \\
(-1)^{\nu_1} & = 2 x y - 4 z w - 4 z w n - 2 z w n^2 \\
(-1)^{\nu_2} (n+1) & =x^2 - y^2 + 2 z^2 - 2 w^2 + 2 z^2 n - 2 w^2 n - y^2 n^2 + z^2 n^2 - 3 w^2 n^2 - 2 w^2 n^3 - w^2 n^4 \\
(-1)^{\nu_2} & = 2 x z - 2 y w - 2 y w n^2 \\
%
(-1)^{\nu_1 + \nu_2 + 1} N & = x^2 - y^2 - 2 z^2 + 2 w^2 - 2 z^2 n + 2 w^2 n - y^2 n^2 - z^2 n^2 + 3 w^2 n^2 + 2 w^2 n^3 + w^2 n^4 \\
(-1)^{\nu_1 + \nu_2 + 1} & =  -2 y z + 2 x w . \\
\end{aligned}
\end{equation*}
Solving these equations yields the following values for $w$:
\begin{equation*} \label{eq:normsexpanded}
\begin{aligned}
\nu_1 = 0, \ \nu_2 = 0 : \quad & w = \pm n/(\sqrt{2}\sqrt{n^2 + 1}) \\
\nu_1 = 0, \ \nu_2 = 1 : \quad &  w = \pm 1/\sqrt{2} \\ 
\nu_1 = 1, \ \nu_2 = 0 : \quad &  w = N/ (\sqrt{2} \sqrt{N^2 + 1} ) \\
\nu_1 = 1, \ \nu_2 = 1 : \quad &  w = (n+1)/(\sqrt{2} \sqrt{(n+1)^2 + 1} ) \\
\end{aligned}
\end{equation*}
Because $n^2 + 1$, $(n+1)^2 + 1$ and $N^2 + 1$ are squarefree and greater than 2 (since $n >1$), it follows that
for each choice of $\nu_1, \nu_2 \in \{ 0, 1 \}$ there are no solutions \eqref{eq:norms} where $w$ is rational.
Since there are no solutions with rational $x,y,z,w$,
this proves $\sqrt{\epsilon_1  \epsilon_2  \epsilon_3}$ cannot be an element of $K$, completing the proof.
\end{proof}

\begin{remark}
When $n = 1$ the biquadratic field in Theorem \ref{thm:def0rank3} is $\QQ(\sqrt 2, \sqrt 5)$,
which has unit signature rank 4.  
\end{remark}

\begin{corollary}
There exist infinitely many real biquadratic fields $K$ whose quadratic subfields all have fundamental
units of norm $-1$ and where the unit signature rank of $K$ is 3.
\end{corollary}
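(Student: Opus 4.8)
The plan is to deduce the corollary directly from Theorem \ref{thm:def0rank3}. That theorem already does all the arithmetic work: for every integer $n > 1$ with $n \not\equiv 2 \bmod 5$ and with $n^2+1$ and $(n+1)^2+1$ both squarefree, the three quadratic subfields of $K = \QQ(\sqrt{n^2+1}, \sqrt{(n+1)^2+1})$ have fundamental units of norm $-1$ and $K$ has unit signature rank $3$. Thus it suffices to prove two things: (i) there are infinitely many admissible $n$, i.e.\ integers satisfying these three conditions, and (ii) infinitely many of the resulting fields $K$ are distinct.

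First I would establish (i) by a density argument. By a classical result of Estermann, the integers $n$ for which $n^2+1$ is squarefree have positive natural density
\[
c = \prod_{p \equiv 1 \bmod 4} \Bigl(1 - \frac{2}{p^2}\Bigr),
\]
the local factor at $p$ counting the solutions of $n^2 \equiv -1 \bmod p^2$ (there are none for $p=2$ or $p \equiv 3 \bmod 4$, and two for $p \equiv 1 \bmod 4$). By translation the same density $c$ governs the set on which $(n+1)^2+1$ is squarefree. A union bound then shows that the set of $n$ failing at least one of the three admissibility conditions — $n^2+1$ not squarefree, $(n+1)^2+1$ not squarefree, or $n \equiv 2 \bmod 5$ — has upper density at most $2(1-c) + \tfrac15$. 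Since $1 - c \le \sum_{p \equiv 1 \bmod 4} 2/p^2 < \tfrac{2}{25} + \tfrac16 < \tfrac25$ by the Weierstrass product inequality and a crude tail estimate, the failure density is strictly less than $1$, so the admissible $n$ have lower density at least $2c - \tfrac65 > 0$ and in particular form an infinite set.

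Next I would check (ii). For each admissible $n$ the field $K$ contains the quadratic subfield $\QQ(\sqrt{n^2+1})$, and because $n^2+1$ is squarefree this subfield is determined by the radicand $n^2+1$; since $n \mapsto n^2+1$ is injective on positive integers and distinct squarefree radicands give distinct quadratic fields, distinct admissible $n$ give distinct subfields $\QQ(\sqrt{n^2+1})$. As a biquadratic field $K$ has only three quadratic subfields, at most finitely many admissible $n$ can produce any single $K$; hence the infinitely many admissible $n$ yield infinitely many distinct fields $K$. Applying Theorem \ref{thm:def0rank3} to each then proves the corollary.

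The main obstacle is the simultaneous squarefreeness of $n^2+1$ and $(n+1)^2+1$. Handling a single squarefree quadratic value is routine, but controlling two such conditions at once (together with the congruence $n \not\equiv 2 \bmod 5$) forces the quantitative input above, and everything hinges on the numerical fact that $c > \tfrac35$, which is what keeps the union bound below $1$. It is essential here that one works with the two quadratic factors separately: treating the product $(n^2+1)((n+1)^2+1) = N^2+1$ as a single quartic in $n$ would make the squarefree-value statement conditional, whereas the squarefreeness of each irreducible quadratic factor is unconditional.
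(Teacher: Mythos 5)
Your proof is correct, and it reaches the conclusion by a genuinely different route than the paper. The paper runs the sieve \emph{jointly}: for each prime $p\equiv 1\bmod 4$ it counts the residue classes $a\bmod p^2$ excluded by either $a^2+1\equiv 0$ or $(a+1)^2+1\equiv 0\bmod p^2$ (folding the condition $n\not\equiv 2\bmod 5$ into the local factor at $5$), and concludes that the admissible $n\le x$ have an asymptotic count with explicit positive constant $C=(1-7/25)\prod_{p\equiv 1\bmod 4,\ p>5}(1-4/p^2)\approx 0.681$. You instead invoke Estermann's single-polynomial theorem twice and combine the three conditions by a union bound, which trades the exact joint asymptotic for the cruder but sufficient numerical fact $c>3/5$ (your tail estimate $1-c\le 2/25+1/6<2/5$ is fine). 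What the paper's approach buys is the precise density of admissible $n$ and no dependence on the inequality $c>3/5$ being true --- the joint local densities are multiplied out directly; what yours buys is that you never have to justify the simultaneous sieve for two quadratics at once, only quote a classical result off the shelf. Your added observation that distinct admissible $n$ yield distinct fields $K$ (via the squarefree radicand $n^2+1$ and the fact that $K$ has only three quadratic subfields) is a point the paper leaves implicit, and your closing remark --- that one must sieve the two irreducible quadratic factors separately rather than treat $N^2+1$ as a quartic, for which unconditional squarefree-value results are unavailable --- is accurate and is exactly why both arguments are structured as they are.
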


\begin{proof}
The integers $n^2 + 1$ and $(n+1)^2 + 1$ are never divisible by 4 and if either is divisible by $p^2$ for
an odd prime $p$ then $p \equiv 1$ mod 4.  For each $p \equiv 1$ mod 4, $p > 5$, there are precisely two
residue classes $a$ mod $p^2$ for which $a^2 + 1 \equiv 0$ mod $p^2$ and two additional residue classes 
for which $(a + 1)^2 + 1 \equiv 0$ mod $p^2$.  When $p = 5$, there are precisely 7 residue classes $a$ mod 25
for which $a \equiv 2$ or $a^2 + 1 \equiv 0$ or $(a + 1)^2 + 1 \equiv 0$ mod 25.  It follows that the
number of $n \le x$ satisfying $n \not\equiv 2$ mod 5 and $n^2 + 1$ and $(n+1)^2 + 1$ both squarefree
is asymptotically equal to $C \sqrt x$ with $C = (1 - 7/25)\prod_{p \equiv 1 \mod 4, p > 5} ( 1 - 4/p^2 ) \sim 0.6810...$,
so there are infinitely many such $n$.  
\end{proof}

\section{Real biquadratic fields whose quadratic subfields all have deficiency 1}   \label{sec:deficiency1}

Suppose now that $K$ is a real biquadratic extension of $\QQ$ whose quadratic subfields $k_1$, $k_2$, and $k_3$ have corresponding
fundamental units $\epsilon_1$, $\epsilon_2$ and $\epsilon_3$ all with norm $+1$ (that is, each quadratic subfield has
deficiency 1).  

Since the deficiency in a finite extension of totally real fields never decreases, 
the deficiency of $K$ is at least 1 and no more than 3. The following Theorems \ref{thm:def1ranks2and3} and    
\ref{theorem:deficiency3} prove the existence of infinitely many examples of each of the three
possibilities for the unit signature rank.  The method of proof involves the explicit construction 
of the group of units of $K$ as in \cite{Kur}, using 
the `$m$-technology' of Proposition \ref{prop:m} as in \cite{Kub}, 
summarized in the following proposition.

\begin{proposition} \label{prop:Kubota}
Suppose $K$ is a real biquadratic field with quadratic subfields 
$k_1 = \QQ(\sqrt {d_1})$, $k_2 = \QQ(\sqrt {d_2})$ and $k_3 = \QQ(\sqrt {d_1 d_2})$ 
whose corresponding fundamental units are $\epsilon_1$, $\epsilon_2$ and $\epsilon_3$, each with
norm $+1$.  Let $E_K$ denote the group of units of $K$.  
If $m_i$ is the integer associated to $\epsilon_i$ in $k_i$ as in Proposition \ref{prop:m}, then
for integers $n_1,n_2,n_3$, the unit $ \epsilon_1^{n_1} \epsilon_2^{n_2} \epsilon_3^{n_3}$
is a square in $E_K$ if and only if the integer $m_1^{n_1} m_2^{n_2} m_3^{n_3}$ is one of
$ 1, d_1, d_2$, or $d_1 d_2 $ up to a rational square. 
\end{proposition}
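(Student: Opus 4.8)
The plan is to reduce the assertion about squares of \emph{units} to an elementary assertion about \emph{rational numbers} being squares in $K$, the bridge being the relation $m_i\,\epsilon_i \in (k_i^*)^2$ furnished by Proposition \ref{prop:m}.

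First I would record that, since each $m_i\,\epsilon_i$ is a square in $k_i \subseteq K$, the unit $\epsilon_i$ and the positive integer $m_i$ represent the \emph{same} class in the group $K^*/(K^*)^2$. Raising to the powers $n_i$ and multiplying, the unit $\epsilon_1^{n_1}\epsilon_2^{n_2}\epsilon_3^{n_3}$ and the rational number $r := m_1^{n_1} m_2^{n_2} m_3^{n_3}$ lie in one and the same square class of $K^*$; in particular, the former is a square in $K^*$ if and only if $r$ is a square in $K^*$.

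Next I would observe that for a unit the notions ``square in $E_K$'' and ``square in $K^*$'' coincide: if $v \in K^*$ satisfies $v^2 = u$ with $u \in E_K$, then the principal ideals give $(v)^2 = (u) = (1)$, whence $(v) = (1)$ and $v \in E_K$. Thus $\epsilon_1^{n_1}\epsilon_2^{n_2}\epsilon_3^{n_3}$, being a unit, is a square in $E_K$ exactly when it is a square in $K^*$, i.e.\ exactly when $r$ is a square in $K^*$. It remains only to decide when the rational number $r$ is a square in $K$. Since $\sqrt{r}$ generates over $\QQ$ either the trivial extension or a quadratic subfield of $K$, and the $V_4$-extension $K = \QQ(\sqrt{d_1},\sqrt{d_2})$ has precisely the three quadratic subfields $\QQ(\sqrt{d_1})$, $\QQ(\sqrt{d_2})$, $\QQ(\sqrt{d_1 d_2})$, we conclude $\sqrt{r} \in K$ if and only if $\QQ(\sqrt r)$ is one of $\QQ,\ \QQ(\sqrt{d_1}),\ \QQ(\sqrt{d_2}),\ \QQ(\sqrt{d_1 d_2})$, that is, if and only if $r$ equals one of $1, d_1, d_2, d_1 d_2$ up to a rational square. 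Chaining these equivalences yields the proposition.

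I do not expect a serious obstacle here: the only point that needs a word of care is the equivalence of ``square in $E_K$'' with ``square in $K^*$'' for units, which the ideal-theoretic remark above settles cleanly. The remainder is bookkeeping in $K^*/(K^*)^2$ together with the elementary description of the quadratic subfields of a biquadratic field.
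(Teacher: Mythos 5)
Your proposal is correct and follows essentially the same route as the paper: use $m_i\epsilon_i\in(k_i^*)^2$ to identify the square class of the unit with that of the integer $m_1^{n_1}m_2^{n_2}m_3^{n_3}$, then decide squareness of a rational number in $K$ via the three quadratic subfields. Your extra remark that a square root in $K^*$ of a unit is automatically a unit is a point the paper passes over silently, but it does not change the argument.
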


\begin{proof}
Note that $m_i \cdot \epsilon_i$ is a square in $k_i$, hence is a square in $K$.  As a consequence, 
$ \epsilon_1^{n_1} \epsilon_2^{n_2} \epsilon_3^{n_3}$ is a square in $E_K$ if and only if $m_1^{n_1} m_2^{n_2} m_3^{n_3}$
is a square in $K$.  But an integer $m$ is a square in $K$ if and only if $K$ contains the field $\QQ(\sqrt{m})$, hence
if and only if $m$ differs from 1, $d_1$, $d_2$, or $d_1 d_2$ by a rational square, completing the proof.  
\end{proof}

By Proposition \ref{prop:unitstructure}, the unit group for $K$ is obtained from the group generated by 
$-1, \epsilon_1, \epsilon_2, \epsilon_3$ by extracting square roots of units 
$ \epsilon_1^{n_1} \epsilon_2^{n_2} \epsilon_3^{n_3}$ (namely, those that are squares in $K$).  For
this purpose it suffices to consider  $n_1,n_2,n_3 \in \{ 0,1 \}$, not all 0, and Proposition \ref{prop:Kubota}
provides a simple criterion to ascertain when the associated unit is a square, provided the integers
$m_1$, $m_2$, and $m_3$ are known.

\medskip

If $k = \QQ(\sqrt{q})$ for a prime $q \equiv 3$ mod 8, then the fundamental unit of $k$
necessarily has norm $+1$.  If $m$ is the corresponding positive integer given by
Proposition \ref{prop:m} then $m$ divides the discriminant, $4q$, of $k$, $m$ is squarefree,
and $m$ is neither equal to 1 nor to $q$.  Hence $m = 2$ or $m = 2 q$.
If $q \equiv 3$ mod 8, then 2 is not a square mod
$q$, hence 2 is not the norm of an integer from $k$, which implies $m = 2 q$.  If $q \equiv 7$ mod 8, then
$2 q$ is not the norm of an integer ($-q$ is a norm and $-2$ is not a square mod $q$, hence not a norm), 
so in this case $m = 2 $.

Suppose now that $K = \QQ( \sqrt{q_1}, \sqrt{q_2})$ with distinct primes $q_1$ and $q_2$ with
$q_1 \equiv q_2 \equiv 3 \mod 4$.
Let $k_1 = \QQ(\sqrt{q_1})$, $k_2 = \QQ(\sqrt{q_2})$, and $k_3 = \QQ(\sqrt{q_1 q_2})$ be the three
quadratic subfields of $K$, with corresponding fundamental units $\epsilon_1$, $\epsilon_2$ and 
$\epsilon_3$, respectively, each of which has norm $+1$ since every discriminant is divisible by a
prime $\equiv 3$ mod 4. Let $m_1$, $m_2$ and $m_3$ be the integers corresponding by Proposition \ref{prop:m} to 
$\epsilon_1$, $\epsilon_2$ and $\epsilon_3$, respectively.

The values of $m_1$ and $m_2$ are determined by the congruences of $q_1$ and $q_2$ mod 8, as above.

The possible values of $m_3$ are $q_1$ and $q_2$.  If $m_3 = q_1$, then $q_1$ is the norm of an integer from $\QQ(\sqrt{q_1 q_2})$
so $a^2 - q_1 q_2 b^2 = 4 q_1$ for some integers $a$ and $b$.  Reading this mod $q_2$ it follows that
$q_1$ is a quadratic residue mod $q_2$:
$( \frac{q_1}{q_2} ) = +1$.  If $m_3 = q_2$, then similarly $q_2$ is a quadratic residue mod $q_1$, 
that is, $( \frac{q_1}{q_2} ) = -1$
since $q_1$ and $q_2$ are $\equiv 3$ mod 4.  It follows that $m_3 = q_1$ if and only if $( \frac{q_1}{q_2} ) = +1$,
and $m_3 = q_2$ if and only if $( \frac{q_1}{q_2} ) = -1$.

\begin{theorem} \label{thm:def1ranks2and3} 
Suppose $K = \QQ( \sqrt{q_1}, \sqrt{q_2})$ for distinct primes $q_1$ and $q_2$ with $q_1 \equiv q_2 \equiv 3$ mod 4,
and with notation as above.  Then $\{ \sqrt{\epsilon_1 \epsilon_2 }, \sqrt{\epsilon_3},\epsilon_2 \}$ is a
set of fundamental units for $K$.  Furthermore, 
\begin{enumerate}

\item
if $q_1 \equiv q_2 \equiv 3$ mod 8, the unit signature rank of $K$ is 3, i.e., $K$ has deficiency 1,

\item
if $q_1 \equiv q_2 \equiv 7$ mod 8, the unit signature rank of $K$ is 2, i.e., $K$ has deficiency 2, and 

\item
if $q_1 \equiv 7$ and $ q_2 \equiv 3$ mod 8, the unit signature rank of $K$ is 3 and the deficiency is 1 
if $(q_1/q_2) = +1$ and both are equal to 2 if $(q_1/q_2) = -1$.

\end{enumerate}

\end{theorem}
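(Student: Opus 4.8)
The plan is to determine $E_K$ from the square-detection criterion of Proposition~\ref{prop:Kubota}, and then to compute the unit signatures by reducing each adjoined square root to a totally positive element divided by $\sqrt{m}$ for a known integer $m$.

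First I would settle the structure of $E_K$. Recall from the discussion preceding the theorem that $m_1\in\{2,2q_1\}$ and $m_2\in\{2,2q_2\}$ are fixed by the residues of $q_1,q_2$ modulo $8$, while $m_3\in\{q_1,q_2\}$ is fixed by $(\tfrac{q_1}{q_2})$. Working in $\QQ^*/(\QQ^*)^2$ and writing $V=\{1,q_1,q_2,q_1q_2\}$ for the square classes of integers that are squares in $K$, Proposition~\ref{prop:Kubota} says that $\epsilon_1^{n_1}\epsilon_2^{n_2}\epsilon_3^{n_3}$ is a square in $E_K$ exactly when $m_1^{n_1}m_2^{n_2}m_3^{n_3}\in V$. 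Since $m_1,m_2$ each carry the prime $2$ while $m_3$ does not and $m_3\in V$, a short check in each of the three congruence cases shows that the exponent vectors giving a square always form the same space $\gp{(0,0,1),(1,1,0)}$; in particular $\sqrt{\epsilon_3}\in K$ and $\sqrt{\epsilon_1\epsilon_2}\in K$, whereas $\epsilon_1$ and $\epsilon_2$ are not squares. This is case~5 of Proposition~\ref{prop:unitstructure}(b), and since $\{\sqrt{\epsilon_1\epsilon_2},\sqrt{\epsilon_3},\epsilon_2\}$ recovers $\epsilon_1,\epsilon_2,\epsilon_3$, it is a fundamental system, which is the first assertion.

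Next I would compute signatures, indexing the four real places of $K$ by $\Gal(K/\QQ)=\{1,\sigma,\tau,\sigma\tau\}$ with $\sigma$ fixing $k_1$ and $\tau$ fixing $k_2$. Each $\epsilon_i$ has norm $+1$, so it and its quadratic conjugate are positive and hence $\epsilon_1,\epsilon_2,\epsilon_3$ are totally positive in $K$; thus $\epsilon_2$ has trivial signature and the unit signature group is spanned by the signatures of $-1$, $\sqrt{\epsilon_1\epsilon_2}$ and $\sqrt{\epsilon_3}$, the first being $(1,1,1,1)$. The key step is to evaluate the two square-root signatures. Put $\beta_i=A_i-B_i\sqrt{q_i}$ as in Proposition~\ref{prop:sqrtepsilon}, so that $\beta_i^2=m_i\epsilon_i$ by \eqref{eq:meps} and $\beta_i>0$; since its $k_i$-conjugate $A_i+B_i\sqrt{q_i}$ has the same sign as $\beta_i$ (their product is $A_i^2-q_iB_i^2=m_i>0$ by \eqref{eq:ABproperties}), $\beta_i$ is totally positive in $K$. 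Hence so are $\beta_3$ and $\beta_1\beta_2$, and writing $\sqrt{\epsilon_3}=\beta_3/\sqrt{m_3}$ and $\sqrt{\epsilon_1\epsilon_2}=\beta_1\beta_2/\sqrt{m_1m_2}$ (the roots $\sqrt{m_3},\sqrt{m_1m_2}\in K$ since $m_3,m_1m_2\in V$) reduces their signatures $s_3$ and $s_{12}$ to those of $\sqrt{m_3}$ and $\sqrt{m_1m_2}$. These are immediate from the place labelling: $\sqrt{q_1}$ has signature $(0,0,1,1)$, $\sqrt{q_2}$ has $(0,1,0,1)$, $\sqrt{q_1q_2}$ has $(0,1,1,0)$, and a rational square has trivial signature.

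Finally I would read off the rank of the matrix with rows $(1,1,1,1)$, $s_{12}$, $s_3$ in each case, where $\sim$ denotes equality up to a rational square. For $q_1\equiv q_2\equiv 3$ mod $8$ one has $m_1m_2\sim q_1q_2$, so $s_{12}=(0,1,1,0)$, and $s_3\in\{(0,0,1,1),(0,1,0,1)\}$ is independent of it: rank $3$. For $q_1\equiv q_2\equiv 7$ mod $8$ one has $m_1m_2\sim 1$, so $s_{12}=0$ and only two rows survive: rank $2$. For $q_1\equiv 7$, $q_2\equiv 3$ mod $8$ one has $m_1m_2\sim q_2$, so $s_{12}=(0,1,0,1)$, while $s_3=(0,0,1,1)$ when $(\tfrac{q_1}{q_2})=+1$ (rank $3$) and $s_3=(0,1,0,1)=s_{12}$ when $(\tfrac{q_1}{q_2})=-1$ (rank $2$). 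I expect the only genuine obstacle to be the totally positive reduction of the square-root units; once that identity is established the rank assertions are a direct linear-algebra verification.
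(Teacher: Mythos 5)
Your proposal is correct and follows essentially the same route as the paper: determine $m_1,m_2,m_3$ from the congruences mod $8$ and $(\tfrac{q_1}{q_2})$, apply Proposition~\ref{prop:Kubota} to identify $\{\sqrt{\epsilon_1\epsilon_2},\sqrt{\epsilon_3},\epsilon_2\}$ as a fundamental system, and then read the signatures of the square roots from the factor $1/\sqrt{m_3}$ or $1/\sqrt{m_1m_2}$ after observing that the numerators $A_i-B_i\sqrt{\cdot}$ are totally positive because $A_i>0$, $B_i<0$ and the product of conjugates is $m_i>0$. The only (harmless) differences are notational: you fix the opposite convention for $\sigma,\tau$ and organize the square-exponent vectors as the subspace $\gp{(0,0,1),(1,1,0)}$ uniformly across the three cases, where the paper checks each case separately.
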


\begin{proof}
By the remarks above, $m_1 = 2 q_1$ and $m_2 = 2 q_2$ in case (1),  $m_1 = m_2 = 2 $ in case (2), and
$m_1 = 2 $ and $m_2 = 2 q_2$ in case (3).  We have $m_3 = q_1$ if $( \frac{q_1}{q_2} ) = +1$, 
and $m_3 = q_2$ if $( \frac{q_1}{q_2} ) = -1$.

Suppose first that $q_1 \equiv q_2 \equiv 3$ mod 8.  Then $m_1 = 2 q_1$ and $m_2 = 2 q_2$.  
By Proposition \ref{prop:Kubota} we are interested in possible relations involving
$m_1^{n_1} m_2^{n_2} m_3^{n_3}$ mod squares, and since $q_1$ and $q_2$ differ by 
a square in $K$, for this question it is not important whether we take $m_3 = q_1$ or
$m_3 = q_2$.  For $n_1,n_2,n_3 \in \{0,1\}$, not all 0, the integer
$(2 q_1)^{n_1} (2 q_2)^{n_2} q_1^{n_3}$ differs by a square from one of $1$, $q_1$, $q_2$, $q_1 q_2$
precisely for $(n_1,n_2,n_3) = (1,1,0)$ or $(0,0,1)$ or $(1,1,1)$.  It follows that
a fundamental set of units for $K$ is given by 
$\{ \sqrt{\epsilon_1 \epsilon_2 }, \sqrt{\epsilon_3},\epsilon_2 \}$.

By Proposition \ref{prop:sqrtepsilon} 
\begin{equation*} \label{eq:sqrtepsilon1}
\sqrt{\epsilon_1} = \frac{1}{\sqrt{2 q_1}} \left( A_1  - B_1 \sqrt{q_1} \right)
\end{equation*}
with integers $A_1$, $B_1$ satisfying $A_1 > 0$, $B_1 < 0$ and $A_1^2 - q_1 B_1^2 = 2 q_1$.
Similarly,
\begin{equation*} \label{eq:sqrtepsilon2}
\sqrt{\epsilon_2} = \frac{1}{\sqrt{2 q_2}} \left( A_2  - B_2 \sqrt{q_2} \right)
\end{equation*}
with integers $A_2$, $B_2$ satisfying $A_2 > 0$, $B_2 < 0$ and $A_2^2 - q_2 B_2^2 = 2 q_2$.
Hence
\begin{equation} \label{eq:sqrtepsilon12}
\sqrt{\epsilon_1 \epsilon_2} =
\frac{1}{2 \sqrt{q_1 q_2} }\left( A_1  - B_1 \sqrt{q_1} \right)  \left( A_2  - B_2 \sqrt{q_2} \right).
\end{equation}
Since $A_1 > 0$ and $B_1 < 0$, we have $A_1 - B_1 \sqrt{q_1} > 0$.  Then $A_1^2 - q_1 B_1^2 = 2 q_1$ shows
$A_1 + B_1 \sqrt{q_1} > 0$ as well.  Similarly,
$A_2 - B_2 \sqrt{q_2}$ and $A_2 + B_2 \sqrt{q_2}$ are both positive.  

Let $\sigma \in \Gal(K/\QQ)$ be the element mapping $\sqrt{q_1}$ to $-\sqrt{q_1}$
and fixing $\sqrt{q_2}$ and let $\tau$ be the element mapping $\sqrt{q_2}$ to $-\sqrt{q_2}$
and fixing $\sqrt{q_1}$.

By \eqref{eq:sqrtepsilon12} it follows that

\begin{equation} \label{eq:signepsilon12}
\sigma ( \sqrt{\epsilon_1 \epsilon_2} ) < 0, \ \tau ( \sqrt{\epsilon_1 \epsilon_2} ) < 0, \text{ and } 
\sigma \tau ( \sqrt{\epsilon_1 \epsilon_2} ) > 0 .
\end{equation}

\smallskip

Since $m_3 = q_1$ if $( \frac{q_1}{q_2} ) = +1$ and $m_3 = q_2$ if $( \frac{q_1}{q_2} ) = -1$, then
as above, by Proposition \ref{prop:sqrtepsilon} we have

\begin{equation} \label{eq:sqrtepsilon3}
\sqrt{\epsilon_3} =
\begin{cases}
\dfrac{1}{\sqrt{q_1}} \left( A_3  - B_3 \sqrt{q_1 q_2} \right) , & \text{if } ( \dfrac{q_1}{q_2} ) = +1 , \\ 
\dfrac{1}{\sqrt{q_2}} \left( A_3  - B_3 \sqrt{q_1 q_2} \right) , & \text{if } ( \dfrac{q_1}{q_2} ) = -1 . \\
\end{cases}
\end{equation}

\smallskip
\noindent
with $A_3$, $B_3 \in (1/2) \ZZ$ such that 
$A_3 - B_3 \sqrt{q_1 q_2}$ and $A_3 + B_3 \sqrt{q_1 q_2}$ are both positive.  Hence

\begin{equation} \label{eq:signepsilon3}
\begin{cases}
\sigma ( \sqrt{\epsilon_3} ) < 0, \ \tau ( \sqrt{\epsilon_3} ) > 0, \ \sigma \tau ( \sqrt{\epsilon_3 } ) < 0 ,
  &  \text{if } ( \dfrac{q_1}{q_2} ) = +1 , \\[10 pt] 
\sigma ( \sqrt{\epsilon_3} ) > 0, \ \tau ( \sqrt{\epsilon_3} ) < 0, \ \sigma \tau ( \sqrt{\epsilon_3 } ) < 0 ,
  &  \text{if } ( \dfrac{q_1}{q_2} ) = -1 . \\ 
\end{cases}
\end{equation}

By \eqref{eq:signepsilon12} and \eqref{eq:signepsilon3}, the signature rank of  
$E_K = \gp{-1,\sqrt{\epsilon_1 \epsilon_2}, \sqrt{\epsilon_3}, \epsilon_2}$ is 3, which proves (1).

Suppose now that $q_1 \equiv q_2 \equiv 7$ mod 8.  Then $m_1 = m_2 = 2 $ and $m_3 = q_1$  or $q_2$.
As before, for $n_1,n_2,n_3 \in \{0,1\}$, not all 0, the integer
$2^{n_1} 2^{n_2} q_1^{n_3}$ differs by a square from one of $1$, $q_1$, $q_2$, $q_1 q_2$
precisely for $(n_1,n_2,n_3) = (1,1,0)$ or $(0,0,1)$ or $(1,1,1)$, hence again in this case
a set of fundamental units for $K$ is given by 
$\{ \sqrt{\epsilon_1 \epsilon_2 }, \sqrt{\epsilon_3},\epsilon_2 \}$.

In this case we have
\begin{equation*} \label{eq:sqrtepsilon12case2}
\sqrt{\epsilon_1 \epsilon_2} =
\frac{1}{2 }\left( A_1  - B_1 \sqrt{q_1} \right)  \left( A_2  - B_2 \sqrt{q_2} \right)
\end{equation*}
(where $A_1^2 - q_1 B_1^2 = A_2^2 - q_2 B_2^2 = 2$, $A_1,A_2 > 0$, $B_1,B_2 < 0$) and   
with $\sqrt{\epsilon_3}$ again given by \eqref{eq:sqrtepsilon3}.  In this case, however,
$\sqrt{\epsilon_1 \epsilon_2} $ is totally positive 
(since 
$A_1 - B_1 \sqrt{q_1}$, $A_1 + B_1 \sqrt{q_1}$, $A_2 - B_2 \sqrt{q_2}$ and $A_2 + B_2 \sqrt{q_2}$ are all
positive), so here
the signature rank of the units of $K$ is 2, which proves (2).

In case (3), $m_1 = 2$, $m_2 = 2 q_2$, with
$m_3 = q_1$ if $(q_1/q_2) = +1$
and
$m_3 = q_2$ if $(q_1/q_2) = -1$.
An analysis as before shows that $\{ \sqrt{\epsilon_1 \epsilon_2}, \sqrt{\epsilon_3}, \epsilon_2 \}$
again gives a set of fundamental units for $K$.  
Here $\sqrt{\epsilon_1 \epsilon_2}$ is $1/( \sqrt{2q_2} \sqrt{2}) = 1/(2\sqrt{q_2})$ times a totally positive element, 
and $\sqrt{\epsilon_3}$ is either 
$1/\sqrt{q_1}$ times a totally positive element if $(q_1/q_2) = +1$ or
$1/\sqrt{q_2}$ times a totally positive element if $(q_1/q_2) = -1$.

It follows that 
the unit signature rank of $K$ is 3 and the deficiency is 1 
if $(q_1/q_2) = +1$ and 
the unit signature rank and the deficiency 
of $K$ are both 2 if $(q_1/q_2) = -1$ (since $\sqrt{\epsilon_1 \epsilon_2}$ and $\sqrt{\epsilon_3}$ provide the
same signature in the latter case), which proves (3).
\end{proof}

\begin{remark} \label{rem:biquadprimesdeficiency}
Combined with results of the previous section, Theorem \ref{thm:def1ranks2and3} shows the unit signature
rank of a real biquadratic field $\QQ( \sqrt{l_1}, \sqrt{l_2} )$, where $l_1$ and $l_2$ are primes, is at least 2
(i.e., the deficiency is at most 2).
\end{remark}

\medskip

We now turn to the question of finding real biquadratic fields of deficiency 3.  Note that since the unit signature
rank never decreases in a totally real extension of a totally real field, each of the quadratic subfields of
a real biquadratic field of deficiency 3 must necessarily have deficiency 1.

\medskip

Suppose $q_1$, $q_2$, $q_3$ and $q_4$ are distinct primes $\equiv 3$ mod 4.  Then the fundamental unit
$\epsilon$ in $k = \QQ( \sqrt{ q_1 q_2 q_3 q_4} )$ has norm $+1$, and the associated $m = m_\epsilon$ is a
divisor of $q_1 q_2 q_3 q_4$, is neither equal to 1 nor to $q_1 q_2 q_3 q_4$, and is a norm
from $k$.   We consider the constraints on the values of the quadratic residue symbols
$(\frac{q_i}{q_j})$ imposed by different possibilities for $m$ in turn:

If $m = q_1$ then  $a^2 -  q_1 q_2 q_3 q_4 \cdot b^2 = 4 q_1$ implies $q_1$ divides $a$, so $a = q_1 a'$ and we have
$q_1 (a')^2 -  q_2 q_3 q_4 \cdot b^2 = 4 $.   Hence $q_1$ is a square mod $q_2$, $q_3$ and $q_4$: 
\begin{equation*}
\left( \frac{q_1}{q_2} \right) = \left( \frac{q_1}{q_3} \right) = \left( \frac{q_1}{q_4} \right) = +1 .
\end{equation*}
The corresponding constraints if $m = q_2$, after inverting one quadratic residue symbol, are
\begin{equation*}
\left( \frac{q_1}{q_2} \right) =  -1 , \quad \left( \frac{q_2}{q_3} \right) = \left( \frac{q_2}{q_4} \right) = +1 ,
\end{equation*}
with similar results if $m = q_3$ or $q_4$.  These are recorded in the first rows of Table \ref{table:quadresconstraints}.

If $m = q_2 q_3 q_4$, then $a^2 -  q_1 q_2 q_3 q_4 \cdot b^2 = 4 q_2 q_3 q_4$ implies $q_2 q_3 q_4$ divides $a$, so $a = q_2 q_3 q_4 a'$
and we have $q_2 q_3 q_4 (a')^2 - q_1 b^2 = 4$.  This is the same equation considered for $m = q_1$ except the signs are reversed,
so we obtain
\begin{equation*}
\left( \frac{q_1}{q_2} \right) = \left( \frac{q_1}{q_3} \right) = \left( \frac{q_1}{q_4} \right) = -1 ,
\end{equation*}
since the $q_i$ are $\equiv 3$ mod 4. The constraints for $m = q_1 q_2 q_3$, $q_1 q_2 q_4$ and $q_1 q_3 q_4$ are 
similarly the negatives of those considered
for $m$ a single prime and are recorded in the last rows of Table \ref{table:quadresconstraints}.

It remains to consider the cases when $m$ is the product of two primes.  If $m = q_1 q_2$, then as before we obtain the equation
$ q_1 q_2 (a')^2 - q_3 q_4 b^2 = 4$.  This yields
\begin{equation*}
\left( \frac{q_1 q_2}{q_3} \right) = \left( \frac{q_1 q_2}{q_4} \right) = +1  \text{ and }
\left( \frac{- q_3 q_4}{q_1} \right) = \left( \frac{- q_3 q_4}{q_2} \right) = +1 ,
\end{equation*}
so
\begin{equation*}
\left( \frac{q_1}{q_3} \right) = \left( \frac{q_2}{q_3} \right) , \quad
\left( \frac{q_1}{q_4} \right) = \left( \frac{q_2}{q_4} \right) ,  \text{ and }
\left( \frac{q_1}{q_3} \right) = -\left( \frac{q_1}{q_4} \right) , \quad
\left( \frac{q_2}{q_3} \right) = -\left( \frac{q_2}{q_4} \right)  .
\end{equation*}
Hence
\begin{equation*}
\left( \frac{q_1}{q_3} \right) = s, \quad \left( \frac{q_1}{q_4} \right) = -s, \quad
\left( \frac{q_2}{q_3} \right) = s, \quad \left( \frac{q_2}{q_4} \right) = -s
\end{equation*}
where $s = \pm 1$.  The constraints when $m$ is one of the other possible products of two primes are 
obtained similarly and all the results are recorded in the middle rows of Table \ref{table:quadresconstraints}.

Table \ref{table:quadresconstraints} summarizes the quadratic residue constraints imposed by the various possible 
values of $m = m_\epsilon$ by the condition that $m_\epsilon$ is a norm. 

\begin {table}[ht] 

\begin{center}
\begin{tabular}{l||c|c|c|c|c|c}
$m$ &
$\left( \frac{q_1}{q_2} \right)$  &  $\left( \frac{q_1}{q_3} \right)$  & $\left( \frac{q_1}{q_4} \right)$  
& $\left( \frac{q_2}{q_3} \right)$  & $\left( \frac{q_2}{q_4} \right)$  & $\left( \frac{q_3}{q_4} \right)$  \rule[-10pt]{0pt}{15pt}   \\
 \hline
 \hline

$q_1$ &  $+1$  & $+1$  &  $+1$  &   &   &    \\
$q_2$ &  $-1$  &   &   & $+1$  &  $+1$  &    \\
$q_3$ &    & $-1$  &   & $-1$  &    &   $+1$  \\
$q_4$ &    &   &  $-1$   &  &  $-1$   &   $-1$  \\
\hline
$q_1 q_2$ &    & $s$  & $-s$ & $s$  & $-s$   &    \\
$q_1 q_3$ & $s$   &   & $-s$ & $-s$  &    &  $-s$   \\
$q_1 q_4$ & $s$   & $-s$  &  &   & $-s$   &  $s$   \\
$q_2 q_3$ & $s$   & $s$  &  &   & $s$   &  $s$   \\
$q_2 q_4$ & $s$   &   & $s$ & $s$  &    &  $-s$   \\
$q_3 q_4$ &   & $s$   & $s$ & $-s$  & $-s$    &    \\
\hline
$q_1 q_2 q_3$ &   &     & $+1$  &    &  $+1$    &  $+1$  \\
$q_1 q_2 q_4$ &   &  $+1$   &    & $+1$   &     &  $-1$  \\
$q_1 q_3 q_4$ & $+1$  &     &    & $-1$   &  $-1$   &    \\
$q_2 q_3 q_4$ & $-1$  &  $-1$   &  $-1$   &    &    &    \\
\hline   
\end{tabular}
\end{center}
\caption {Summary of quadratic residue symbol constraints for primes $q_1$, $q_2$, $q_3$, $q_4$, each
$\equiv 3$ mod 4, if
the given value of $m$ is the norm of an integer from $\QQ( \sqrt{ q_1 q_2 q_3 q_4} )$.  Value of $s = \pm 1$ is 
fixed in any given row, but rows are independent}
\label{table:quadresconstraints} 

\end {table}

In the following theorem we use these constraints, together with the fact that there
exist infinitely many collections of primes satisfying the necessary quadratic residue 
relations by Dirichlet's theorem on primes in arithmetic progression, to prove there 
exist infinitely many real biquadratic fields of deficiency 3.

\begin{theorem} \label{theorem:deficiency3}
Suppose the primes $q_1, \dots, q_6$, each $\equiv 3$ mod 4, are chosen so that the following 
quadratic residue relations are satisfied:
\begin{equation*}
\left( \frac{q_1}{q_2} \right)   =  
\left( \frac{q_1}{q_3} \right)   = 
\left( \frac{q_1}{q_4} \right)   =  
\left( \frac{q_1}{q_5} \right)   = -1 , 
\left( \frac{q_1}{q_6} \right)   = 
\left( \frac{q_2}{q_3} \right)   = +1 , 
\left( \frac{q_2}{q_4} \right)   = -1 , 
\left( \frac{q_2}{q_5} \right)   = +1 , 
\end{equation*}
\begin{equation*}
\left( \frac{q_2}{q_6} \right)   = 
\left( \frac{q_3}{q_4} \right)   = +1 , 
\left( \frac{q_3}{q_5} \right)   = -1 , 
\left( \frac{q_3}{q_6} \right)   =  
\left( \frac{q_4}{q_5} \right)   = +1 , 
\left( \frac{q_4}{q_6} \right)   = 
\left( \frac{q_5}{q_6} \right)   = -1 . 
\end{equation*}

\smallskip
\noindent
Let $\epsilon_1$ denote the fundamental unit for $k_1 = \QQ( \sqrt{ q_1 q_2 q_3 q_4} )$,
$\epsilon_2$ the fundamental unit for $k_2 = \QQ( \sqrt{ q_1 q_2 q_5 q_6} )$, and
$\epsilon_3$ the fundamental unit for $k_3 = \QQ( \sqrt{ q_3 q_4 q_5 q_6} )$.  Then
$\{ \epsilon_1,\epsilon_2,\epsilon_3 \}$ is a set of fundamental units for the 
composite biquadratic field $K = \QQ( \sqrt{ q_1 q_2 q_3 q_4} , \sqrt{ q_1 q_2 q_5 q_6} )$.
In particular, these fundamental units are totally positive, so there exist infinitely 
many real biquadratic fields $K$ having deficiency 3, i.e.,
having unit signature rank 1.
\end{theorem}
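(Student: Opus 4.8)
The plan is to put $K$ into the framework of Proposition~\ref{prop:Kubota} and apply the `$m$-technology' to rule out the extraction of any square root, so that $\epsilon_1,\epsilon_2,\epsilon_3$ already generate $E_K$ modulo $\gp{-1}$. The three quadratic subfields of $K=\QQ(\sqrt{q_1q_2q_3q_4},\sqrt{q_1q_2q_5q_6})$ are $k_1$, $k_2$, and (after clearing the square factor $q_1^2q_2^2$ from the product of the two radicands) $k_3=\QQ(\sqrt{q_3q_4q_5q_6})$. Each radicand is a product of primes $\equiv 3$ mod $4$, so each discriminant is divisible by such a prime and each fundamental unit $\epsilon_i$ therefore has norm $+1$. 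Being $>1$ in one embedding and of norm $+1$, each $\epsilon_i$ is totally positive, so every $k_i$ has deficiency $1$ and Proposition~\ref{prop:Kubota} applies with $d_1=q_1q_2q_3q_4$ and $d_2=q_1q_2q_5q_6$.

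Next I would pin down the integers $m_i=m_{\epsilon_i}$. Since each radicand is $\equiv 1$ mod $4$, the discriminant of $k_i$ equals its odd radicand, so $m_i$ is a product of a proper nonempty subset of the four relevant primes; moreover $m_i$ is a norm from $k_i$, hence it must satisfy the quadratic residue constraints recorded in Table~\ref{table:quadresconstraints} for the corresponding prime quadruple. Testing the fifteen prescribed symbols against each row of the table, I expect exactly one admissible value to survive in each case, namely $m_1=q_2q_3q_4$, $m_2=q_2$, and $m_3=q_4q_6$. This elimination --- checking that the prescribed relations leave a single consistent value of $m_i$ for every $i$ --- is the crux of the argument, and it is where all the listed congruence data is used.

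With the $m_i$ in hand, I would identify each squarefree integer with its exponent vector in $\FF_2^6$. Then $1,d_1,d_2,d_1d_2$ become $0,(1,1,1,1,0,0),(1,1,0,0,1,1),(0,0,1,1,1,1)$, while $m_1,m_2,m_3$ become $(0,1,1,1,0,0),(0,1,0,0,0,0),(0,0,0,1,0,1)$. By Proposition~\ref{prop:Kubota}, $\epsilon_1^{n_1}\epsilon_2^{n_2}\epsilon_3^{n_3}$ is a square in $E_K$ precisely when $n_1m_1+n_2m_2+n_3m_3$ lands in the four-element subgroup $\{0,d_1,d_2,d_1d_2\}$, and a direct check of the seven nonzero triples $(n_1,n_2,n_3)$ shows that none of them does. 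By Proposition~\ref{prop:unitstructure} no square root is adjoined, so $E_K=\gp{-1,\epsilon_1,\epsilon_2,\epsilon_3}$ and $\{\epsilon_1,\epsilon_2,\epsilon_3\}$ is a fundamental system of units. As these three units are totally positive, the unit signature group of $K$ is spanned by the signature of $-1$ alone, giving unit signature rank $1$ and deficiency $3$.

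Finally, for the infinitude I would construct the primes one at a time. Having chosen $q_1,\dots,q_{k-1}$, the conditions on $q_k$ are that $q_k\equiv 3$ mod $4$ together with the prescribed symbols $\left(\tfrac{q_i}{q_k}\right)$ for $i<k$. Since all primes are $\equiv 3$ mod $4$, quadratic reciprocity rewrites each such symbol as $\left(\tfrac{q_k}{q_i}\right)=-\left(\tfrac{q_i}{q_k}\right)$, a condition on $q_k$ modulo $q_i$; combined via the Chinese Remainder Theorem with the condition mod $4$, these cut out a nonempty union of reduced residue classes modulo $4q_1\cdots q_{k-1}$, and Dirichlet's theorem on primes in arithmetic progression supplies infinitely many admissible $q_k$. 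Iterating yields infinitely many prime sextuples, hence infinitely many fields $K$ of deficiency $3$. I expect the only genuine obstacle to lie in the second paragraph: one must verify both that the chosen relations force each $m_i$ to a single value and that the three forced values jointly avoid every residue in $\{1,d_1,d_2,d_1d_2\}$ modulo squares --- a compatibility that the particular choice of symbols in the statement is engineered to guarantee.
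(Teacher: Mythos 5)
Your proposal is correct and follows essentially the same route as the paper: each fundamental unit has norm $+1$, the quadratic residue data is fed into Table~\ref{table:quadresconstraints} to force $m_1=q_2q_3q_4$, $m_2=q_2$, $m_3=q_4q_6$, Proposition~\ref{prop:Kubota} then rules out every nontrivial square product (your $\FF_2^6$ exponent-vector check is just a reformulation of the paper's list of seven products), and Dirichlet's theorem supplies infinitely many prime sextuples. The only cosmetic difference is the linear-algebra packaging of the final verification; the substance is identical.
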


\begin{proof}
Comparing the values of the various quadratic residue symbols for $q_1,q_2,q_3$ and $q_4$ in 
Table \ref{table:q1q2q3q4} to the results in Table \ref{table:quadresconstraints} 
shows that $q_2 q_3 q_4$ is the only possible value for
$m_1 = m_{\epsilon_1}$ for the field $\QQ( \sqrt{ q_1 q_2 q_3 q_4} )$.

\begin {table}[ht] 

\begin{center}
\begin{tabular}{c|c|c|c|c|c}
$\left( \frac{q_1}{q_2} \right)$  &  
$\left( \frac{q_1}{q_3} \right)$  & 
$\left( \frac{q_1}{q_4} \right)$  & 
$\left( \frac{q_2}{q_3} \right)$  & 
$\left( \frac{q_2}{q_4} \right)$  & 
$\left( \frac{q_3}{q_4} \right)$  \rule[-10pt]{0pt}{15pt}   \\
 \hline
 \hline
  $-1$ &  $-1$ &  $-1$ &  1 &  $-1$ &  1 \\
\hline   
\end{tabular}
\end{center}
\caption {Quadratic residue symbol values for $q_1$, $q_2$, $q_3$, $q_4$ in Theorem \ref{theorem:deficiency3}.}
\label{table:q1q2q3q4} 

\end {table}

Comparing the values of the various quadratic residue symbols for $q_1,q_2,q_5$ and $q_6$ in 
Table \ref{table:q1q2q5q6} 
to the results in Table \ref{table:quadresconstraints} (with
$q_3$ replaced by $q_5$ and $q_4$ replaced by $q_6$) shows that $q_2$ is the only possible value for
$m_2 = m_{\epsilon_2}$  for the field $\QQ( \sqrt{ q_1 q_2 q_5 q_6} )$.

\begin{table}[ht] 

\begin{center}
\begin{tabular}{c|c|c|c|c|c}
$\left( \frac{q_1}{q_2} \right)$  &  
$\left( \frac{q_1}{q_5} \right)$  & 
$\left( \frac{q_1}{q_6} \right)$  & 
$\left( \frac{q_2}{q_5} \right)$  & 
$\left( \frac{q_2}{q_6} \right)$  & 
$\left( \frac{q_5}{q_6} \right)$  \rule[-10pt]{0pt}{15pt}   \\
 \hline
 \hline
$-1$ &  $-1$ &  1 &  1 &  1 &  $-1$  \\
\hline   
\end{tabular}
\end{center}
\caption {Quadratic residue symbol values for $q_1$, $q_2$, $q_5$, $q_6$ in Theorem \ref{theorem:deficiency3}.}
\label{table:q1q2q5q6} 

\end {table}
\noindent

Finally, comparing the values of the various quadratic residue symbols for  $q_3,q_4,q_5$ and $q_6$ in 
Table \ref{table:q3q4q5q6} 
to the results in Table \ref{table:quadresconstraints} (with $q_1$, $q_2$, $q_3$ and $q_4$ replaced by 
$q_3$, $q_4$, $q_5$ and $q_6$, respectively)
shows that $q_4 q_6$ is the only possible value for
$m_3 = m_{\epsilon_3}$  for the field $\QQ( \sqrt{ q_3 q_4 q_5 q_6} )$.

\begin {table}[ht] 

\begin{center}
\begin{tabular}{c|c|c|c|c|c}
$\left( \frac{q_3}{q_4} \right)$  &  
$\left( \frac{q_3}{q_5} \right)$  & 
$\left( \frac{q_3}{q_6} \right)$  & 
$\left( \frac{q_4}{q_5} \right)$  & 
$\left( \frac{q_4}{q_6} \right)$  & 
$\left( \frac{q_5}{q_6} \right)$  \rule[-10pt]{0pt}{15pt}   \\
 \hline
 \hline
1 &  $-1$ &  1 &  1 &  $-1$ &  $-1$  \\
\hline   
\end{tabular}
\end{center}
\caption {Quadratic residue symbol values for $q_3$, $q_4$, $q_5$, $q_6$ in Theorem \ref{theorem:deficiency3}.}
\label{table:q3q4q5q6} 

\end {table}
Since $m_1 = q_2 q_3 q_4$,
$m_2 = q_2$, and 
$m_3 =  q_4 q_6$, if not all of $n_1$, $n_2$ and $n_3$ are even, then
$m_1^{n_1}  m_2^{n_2}  m_3^{n_3}$ is (up to the square of a rational integer) one of 
$q_2$, $q_3 q_4$, $q_2 q_3 q_4$, $q_3 q_6$, $q_2 q_3 q_6$, $q_4 q_6$, or $q_2 q_4 q_6$.
Because none of these is $1,  q_1 q_2 q_3 q_4, q_1 q_2 q_5 q_6$ or $q_3 q_4 q_5 q_6$, 
it follows by Proposition \ref{prop:Kubota} that 
$\epsilon_1^{n_1}  \epsilon_2^{n_2}  \epsilon_3^{n_3}$ is a square in $K$
only when $n_1$, $n_2$ and $n_3$ are all even.  This proves 
$\{ \epsilon_1,\epsilon_2,\epsilon_3 \}$ is a set of fundamental units for $K$.  

Using Dirichlet's theorem on primes in arithmetic progression, choosing any prime 
$q_1 \equiv 3$ mod 4 and then the primes $q_i$, $i = 2, \dots , 6$ inductively to satisfy
the congruences necessary for the required quadratic residue relations with respect to the
primes $q_j$, $j < i$, constructs infinitely many such real biquadratic fields, completing the
proof. 
\end{proof}

\begin{example}
For an explicit example: take $q_1 = 31$, $q_2 = 47$, $q_3 = 67$, $q_4 = 7$, $q_5 = 19$, $q_6 = 11$, with
$q_1 q_2 q_3 q_4 =  683333$, $q_1 q_2 q_5 q_6 =  304513$, 
$q_3 q_4 q_5 q_6 =   98021$ and asociated 
quadratic residue data $( -1,-1,-1,-1,1,1,-1,1,1,1,-1,1,1,-1,-1 )$ as in  
Theorem \ref{theorem:deficiency3}.  The corresponding biquadratic field $K = \QQ(\sqrt{683333} , \sqrt{304513})$ has
class group  isomorphic to $ (\ZZ/ 2 \ZZ)^2 \times (\ZZ/4\ZZ)$ 
and strict class group  isomorphic to $ (\ZZ/ 2 \ZZ)^3 \times (\ZZ/4\ZZ)^2$ 
\end{example}

\begin{remark}
There are many quadratic residue configurations $( \dots, (\frac{q_i}{q_j}) , \dots )$, $ 1 \le i < j \le 6$ 
for which $\{ \epsilon_1,\epsilon_2,\epsilon_3 \}$ is a set of fundamental units for the 
composite biquadratic field 
$K$
as in Theorem \ref{theorem:deficiency3}.
For example, $(-1, -1, -1, -1, 1, -1, -1, -1, -1, -1, -1, -1, -1, 1, -1)$
is another instance---an explicit example of which is given by $q_1 =  43$, $q_2 = 11$,  $q_3 = 23$,  $q_4 = 31$,  $q_5 = 47$,  $q_6 = 3$, so that
$K = \QQ( \sqrt{337249}, \sqrt{66693})$, whose class group is
isomorphic to $ (\ZZ/ 2 \ZZ)^2 \times (\ZZ/24\ZZ)$ and whose strict class group is
isomorphic to $ (\ZZ/ 2 \ZZ)^3 \times (\ZZ/4\ZZ) \times (\ZZ/24\ZZ)$.
In fact, the method of proof of Theorem \ref{theorem:deficiency3} applies to 14080 of the $2^{15}$ total
possible quadratic residue configurations.
\end{remark}

\begin{remark}
By \cite{D}, the strict class group of the biquadratic field $K$ in Theorem \ref{theorem:deficiency3} always contains
at least one element of order 4.  
\end{remark}

\section{Real multiquadratic fields}  

As previously noted, the unit signature rank of a real biquadratic field $K$
is influenced both by the signatures of the fundamental units of its three quadratic 
subfields and by how those fundamental units are situated in the units of $K$.  
As we now show, the signatures provided by the units from the 
quadratic subfields provide relatively few 
signatures in higher rank multiquadratic fields.

We first observe that units from real quadratic subfields define characters of Galois extensions of $\QQ$,
a result that may be of independent interest.

\begin{lemma} \label{lem:signcharacters}
Suppose that $L$ is any Galois extension of $\QQ$ contained in $\CC$ 
and that $\epsilon > 0$ is a unit in a real quadratic
subfield of $L$.
Then the map $\chi_\epsilon$ defined by $\chi_\epsilon (\sigma) = \text{sign}(\sigma(\epsilon))$
defines a character of $\Gal(L/\QQ)$.
\end{lemma}

\begin{proof}
Since $\sigma (\epsilon) = \text{sign}(\sigma(\epsilon)) / \epsilon$ if $\sigma$
acts nontrivially on $\epsilon$, we have 
$\sigma (\epsilon) = \text{sign}(\sigma(\epsilon)) \, \epsilon^{\pm 1}$ for all $\sigma$.  Then
for any $\sigma, \tau \in \Gal(L/\QQ)$,
\begin{equation*}
\sigma \tau (\epsilon) = \sigma \left (  \text{sign}(\tau(\epsilon)) \ \epsilon^{\pm 1}  \right ) 
=
\text{sign}(\tau(\epsilon)) \text{ sign}(\sigma(\epsilon)) \ \epsilon^{\pm 1}
\end{equation*}
from which it follows that
\begin{equation*}
\chi_\epsilon (\sigma \tau) =  \text{ sign}(\sigma \tau (\epsilon)) =
\text{ sign}(\sigma(\epsilon))  \text{ sign}(\tau(\epsilon))= \chi_\epsilon (\sigma) \chi_\epsilon (\tau) ,
\end{equation*}
so that $\chi_\epsilon$ is a character of $\Gal(L/\QQ)$.
\end{proof}

Suppose in particular that $L$ is a real multiquadratic extension of rank $t$, that is, $L$ is a Galois
extension of $\QQ$ contained in the reals with $\Gal(L/\QQ) \iso (\ZZ / 2 \ZZ)^t$.  The field $L$
contains precisely $2^t - 1$ real quadratic subfields.  For
$1 \le i \le 2^t - 1$ let $\epsilon_i$ denote the fundamental unit from the quadratic subfield $k_i$
of $L$ and assume $\nnorm{k_i}{\epsilon} = -1$ for every $i$.

The signature of $\epsilon_i$, considered as an element of $L$,
consists of $2^{t-1}$ values of 1 (or 0 when viewed additively) for the automorphisms $\sigma \in \Gal(L/k_i) \le \Gal(L/\QQ)$
that fix $k_i$ and $2^{t-1}$ values of $-1$ (or 1 when viewed additively) for the nontrivial coset of $\Gal(L/k_i)$ in $\Gal(L/\QQ)$. 
If $\chi_i = \chi_{\epsilon_i}$ denotes the character given by the signatures of $\epsilon_i$ as in 
Lemma \ref{lem:signcharacters}, then $\chi_i$ is a character of order 2 whose kernel is $\Gal(L/k_i)$.
Since the subgroups $\Gal(L/k_i)$ for $1 \le i \le 2^t - 1$ are all the distinct subgroups
of $G$ of index 2, it follows that the $ \chi_i $ for $1 \le i \le 2^t - 1$ give 
all the nontrivial characters of $G$.  

Viewing the characters $\chi_i$ as having additive values in $\FF_2$, 
it follows that the $2^t \times 2^t$ matrix over $\FF_2$ of signatures of $\{ -1, \epsilon_1, \dots, \epsilon_{2^t - 1} \}$ 
is the character table for $G$ except that 
the row for the trivial character of $G$ is replaced by a row consisting of $1$'s (the signature of
$-1$ viewed additively).  

As a result, the rank of this matrix is $t + 1$:  the character group is isomorphic to $(\ZZ / 2\ZZ)^t$, so is generated
by $t$ elements, and the row of $2^t$ 1's is independent of the remaining rows (all of which have a
0 in the column corresponding to the identity element in $G$).

If any of the units $\epsilon_i$ from the quadratic subfields of $L$ has norm $+1$, then the rank of the group
of signatures can only decrease.  We summarize this in the following proposition.

\begin{proposition} \label{prop:multiquadratic}
The maximum possible signature rank coming from the units 
in the $2^t - 1$ real quadratic subfields of a real multiquadratic field $L$ of rank $t$ is $t+1$.
\end{proposition}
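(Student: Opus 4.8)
The plan is to reduce the proposition to a single rank computation over $\FF_2$ and then read off the answer from the character theory of $G = \Gal(L/\QQ) \iso (\ZZ/2\ZZ)^t$. Because the unit group of each quadratic subfield $k_i$ is generated by $-1$ together with its fundamental unit $\epsilon_i$, the signatures of all units lying in quadratic subfields of $L$ span the same $\FF_2$-subspace of $\FF_2^{2^t}$ as the signatures of $\{-1, \epsilon_1, \dots, \epsilon_{2^t-1}\}$. So it suffices to compute the rank of the $2^t \times 2^t$ matrix $M$ whose rows record these signatures, the columns being indexed by $\sigma \in G$.

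First I would treat the extremal case in which $\nnorm{k_i}{\epsilon_i} = -1$ for every $i$, and show the rank equals $t+1$. By Lemma \ref{lem:signcharacters} each $\chi_i := \chi_{\epsilon_i}$ is a character of $G$; writing $\sigma_i$ for the nontrivial automorphism of $k_i/\QQ$, the relation $\nnorm{k_i}{\epsilon_i} = \epsilon_i\,\sigma_i(\epsilon_i) = -1$ together with $\epsilon_i > 0$ forces $\sigma_i(\epsilon_i) < 0$, so $\chi_i$ is a nontrivial character of order $2$ with kernel the index-$2$ subgroup $\Gal(L/k_i)$. The key structural step is the bijection, via Galois theory, between the $2^t-1$ quadratic subfields $k_i$, the $2^t-1$ index-$2$ subgroups of $G$, and the $2^t-1$ nontrivial characters of $G$ (each such character being determined by its kernel); this forces the $\chi_i$ to run through every nontrivial character of $G$.

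The rank of $M$ then follows from two observations. Writing characters additively as maps $G \to \FF_2$, the set of all characters is closed under addition and hence is an $\FF_2$-subspace of $\FF_2^{2^t}$; as $\widehat{G} \iso (\ZZ/2\ZZ)^t$ has $2^t$ elements this subspace has dimension $t$, and its nonzero elements, namely the $\chi_i$, span it. Second, every character satisfies $\chi_i(e) = +1$, i.e. has additive value $0$ in the identity column, whereas the signature row of $-1$ is the all-ones vector; hence the $-1$ row is independent of the character rows and raises the rank by exactly one, giving $\rk M = t+1$. Finally, if some $\epsilon_i$ instead has norm $+1$, then $\epsilon_i\,\sigma_i(\epsilon_i) = +1$ with $\epsilon_i > 0$ makes $\sigma_i(\epsilon_i) > 0$, so $\epsilon_i$ is totally positive and $\chi_i$ is trivial, contributing a zero row; deleting such rows can only decrease the span, so in all cases $\rk M \le t+1$, with equality (as just shown) when every norm is $-1$. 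This establishes the maximum of $t+1$.

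The main obstacle is not any deep estimate but the careful assembly of the linear-algebra bookkeeping: one must confirm that the nontrivial characters of $(\ZZ/2\ZZ)^t$ span a space of dimension exactly $t$ and that the all-ones vector from $-1$ genuinely lies outside it. Both become transparent once one notes that every $\chi_i$ takes the value $0$ (additively) at the identity of $G$ while the signature of $-1$ takes the value $1$ there; this single observation simultaneously supplies the independence of the $-1$ row and pins the total rank at $t+1$.
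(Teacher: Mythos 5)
Your proposal is correct and follows essentially the same route as the paper: identify the signatures of the $\epsilon_i$ (normalized positive, norm $-1$) via Lemma \ref{lem:signcharacters} with the full set of nontrivial characters of $G \iso (\ZZ/2\ZZ)^t$ through their kernels $\Gal(L/k_i)$, note that these span a $t$-dimensional $\FF_2$-subspace vanishing at the identity column while the $-1$ row is the all-ones vector, and observe that a norm $+1$ fundamental unit is totally positive and so contributes nothing. The only difference is cosmetic — you spell out the norm $+1$ degeneration slightly more explicitly than the paper does.
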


\begin{remark}
The field $L = \QQ( \sqrt{p_1}, \dots , \sqrt{p_t})$ for primes $p_1, \dots , p_t$ all $\equiv 1$ mod 4
shows the maximum value of $t+1$ can be achieved.  For a 
biquadratic field, this maximum is 3, as noted in equation \eqref{eq:sigmatrix4}.
\end{remark}

\medskip

The following result shows that for $(\ZZ / 2 \ZZ)^3$ extensions generated by the square roots of primes
it is not possible to have a unit signature rank less than 3 (compare to Remark \ref{rem:biquadprimesdeficiency}
that the analogous $(\ZZ/2\ZZ)^2$ extensions have signature rank at least 2).

\begin{proposition}
Suppose $K = \QQ(\sqrt{l_1}, \sqrt{l_2}, \sqrt{l_3})$ with $l_1, l_2, l_3$ distinct primes.  Then the 
unit signature rank of $K$ is at least 3.  This minimum is best possible: the field 
$\QQ(\sqrt{7}, \sqrt{23}, \sqrt{127})$ has deficiency 5.  All other possible deficiencies between 0 and 5 
can occur: $\QQ(\sqrt{5}, \sqrt{13}, \sqrt{37} )$ has deficiency 0, 
$\QQ(\sqrt{5}, \sqrt{13}, \sqrt{17} )$ has deficiency 1,  
$\QQ(\sqrt{5}, \sqrt{13}, \sqrt{29} )$ has deficiency 2,
$\QQ(\sqrt{3}, \sqrt{7}, \sqrt{11} )$ has deficiency 3, and  $\QQ(\sqrt{7},  \sqrt{23}, \sqrt{71})$ 
has deficiency 4.
\end{proposition}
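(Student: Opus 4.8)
My plan is to treat the two assertions separately: the uniform lower bound $\operatorname{rank}\ge 3$ (equivalently $\delta(K)\le 5$), which is structural and uniform in the primes, and the realization of each value $\delta(K)\in\{0,1,2,3,4,5\}$ by the listed fields, which is essentially computational. Throughout I would work in $\FF_2^{G}$, where $G=\Gal(K/\QQ)\cong(\ZZ/2\ZZ)^3$, and identify $g\in G$ with its coordinate vector $(a,b,c)\in\FF_2^3$ recording the action on $\sqrt{l_1},\sqrt{l_2},\sqrt{l_3}$; the signature of a unit is then an $\FF_2$-valued function on $G$. The element $-1$ contributes the all-ones function $\mathbf 1$.

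For the lower bound I would feed in the three biquadratic subfields $F_{12}=\QQ(\sqrt{l_1},\sqrt{l_2})$, $F_{13}$, $F_{23}$. Let $\sigma_i$ be the automorphism negating $\sqrt{l_i}$. Since inflation $\FF_2^{G/H}\hookrightarrow\FF_2^{G}$ is injective, the image $W_{ij}\subseteq\FF_2^{G}$ of the $F_{ij}$-unit signatures has rank equal to the signature rank of $F_{ij}$, which is at least $2$ by Remark \ref{rem:biquadprimesdeficiency} (valid for every pair of distinct primes, including $l_i=2$). Each $W_{ij}$ contains $\mathbf 1$, and every element of $W_{12}$ is constant on the cosets of $\Gal(K/F_{12})=\langle\sigma_3\rangle$, i.e.\ is a function of $(a,b)$ alone; similarly $W_{13}$ gives functions of $(a,c)$ and $W_{23}$ functions of $(b,c)$.

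The crux is then a short combinatorial argument. Each quotient $W_{ij}/\langle\mathbf 1\rangle$ is nonzero and consists of nonconstant functions of its two coordinates. Arguing by contradiction, if the unit signature rank were $\le 2$ then $W:=W_{12}+W_{13}+W_{23}$ would satisfy $\dim(W/\langle\mathbf 1\rangle)\le 1$, forcing all three nonzero quotients to coincide with a single line $\langle\bar w\rangle$, $\bar w\ne 0$. A representative of $\bar w$ from $W_{12}$ depends only on $(a,b)$ while one from $W_{13}$ depends only on $(a,c)$, and the two differ by a constant; fixing $a$ shows each is a function of $a$ alone. Comparing this with a representative from $W_{23}$ (a function of $(b,c)$) then forces $\bar w$ to be constant, a contradiction. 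Hence $\dim(W/\langle\mathbf 1\rangle)\ge 2$ and the signature rank of $K$ is at least $3$.

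Finally, for the realizations I would, for each listed field, compute a system of fundamental units and the resulting signature matrix over $\FF_2$ (equivalently, compare the wide and narrow class numbers via $h^+_K=2^{\delta(K)}h_K$). The sharp example $\QQ(\sqrt 7,\sqrt{23},\sqrt{127})$ is already explained structurally: all three primes are $\equiv 7\pmod 8$, so by case (2) of Theorem \ref{thm:def1ranks2and3} each $W_{ij}$ has rank exactly $2$, and since $(7/23)=(7/127)=(23/127)=-1$ the three extra characters are $\chi_{\sqrt{23}},\chi_{\sqrt{127}},\chi_{\sqrt{127}}$, whence $W=\langle\mathbf 1,\chi_{\sqrt{23}},\chi_{\sqrt{127}}\rangle$ has rank exactly $3$. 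The main obstacle lies here, not in the lower bound: to conclude $\delta(K)=5$ one must rule out any \emph{genuinely} triquadratic unit enlarging this rank, which requires the full Kuroda-type description of $E_K$ for a $(\ZZ/2\ZZ)^3$ extension (or a direct machine computation of the unit signatures), and likewise for pinning down the exact intermediate values $\delta=0,1,2,3,4$ attained by the other examples.
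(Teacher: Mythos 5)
Your proof of the lower bound is correct, but it proceeds by a genuinely different route from the paper's. The paper argues by cases on the congruence classes of $l_1,l_2,l_3$ modulo $4$ and $8$: when some $l_i$ is $2$ or $\equiv 1 \pmod 4$ it combines the norm-$(-1)$ fundamental unit of $\QQ(\sqrt{l_i})$ with the signatures from a suitable biquadratic subfield, and in the hardest case ($l_1\equiv l_2\equiv 7$, all $\equiv 3 \pmod 4$) it tracks \emph{which} character each unit $\sqrt{\epsilon_{ij}}$ realizes (namely that $\sqrt{\epsilon_{ij}}$ is $1/\sqrt{l_i}$ or $1/\sqrt{l_j}$ times a totally positive element, depending on $(l_i/l_j)$) to exhibit three explicitly independent signatures. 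You instead use only the single uniform input that each $W_{ij}$ has rank at least $2$ (Remark \ref{rem:biquadprimesdeficiency}, which the paper does state for all pairs of distinct primes including $2$), and replace the case analysis by the combinatorial observation that a nonconstant $\FF_2$-valued function of $(a,b)$ cannot agree modulo constants with one of $(a,c)$ and one of $(b,c)$ unless all are constant. Your argument is shorter, avoids the delicate bookkeeping of which $1/\sqrt{l_i}$ appears, and would transfer to any situation where the three biquadratic subfields each have signature rank at least $2$; the paper's argument, while longer, yields more refined information (explicit units realizing the independent signatures in each congruence case), which it reuses elsewhere. For the six explicit fields, both you and the paper defer to machine computation --- the paper says only ``a computation shows'' --- and your structural discussion of $\QQ(\sqrt 7,\sqrt{23},\sqrt{127})$, including the caveat that one must still rule out genuinely triquadratic units raising the rank above $3$, correctly identifies what that computation must verify.
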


\begin{proof}
Suppose that either $l_1 = 2$ or $l_1 \equiv 1$ mod 4 and that $l_2 \equiv 1$ mod 4.  
Then the unit signature rank of the biquadratic field $\QQ(\sqrt{l_1}, \sqrt{l_2})$ is at least 3 from the first three rows
of \eqref{eq:sigmatrix4}, so $K$ also has at least 3 independent unit signatures.
Suppose next that either $l_1 = 2$ or $l_1 \equiv 1$ mod 4 and that $l_2 \equiv l_3 \equiv 3$ mod 4.
Then the biquadratic field $\QQ(\sqrt{l_2}, \sqrt{l_3})$ has unit signature rank at least 2 by 
Remark \ref{rem:biquadprimesdeficiency}, and the fundamental unit from $\QQ(\sqrt{l_1})$ provides an independent signature,
so again the unit signature rank of $K$ is at least 3.

It remains to consider the case when $l_1 \equiv l_2 \equiv l_3 \equiv 3$ mod 4.
If at least two of $l_1, l_2, l_3$ are congruent to 3 mod 8, then (1) of Theorem \ref{thm:def1ranks2and3} 
implies the unit signature rank of $K$ is at least 3. 
Suppose finally that $l_1 \equiv l_2 \equiv 7$ mod 8 and order $l_1$, $l_2$ so that $(l_1/l_2) = +1$.
Then as in the proof of (2) of Theorem \ref{thm:def1ranks2and3}, the square root of the
fundamental unit of $\QQ(\sqrt{l_1 l_2})$ lies in $K$ and differs from $1/\sqrt{l_1}$ by a totally positive element.
Similarly (from the proof of (2) of Theorem \ref{thm:def1ranks2and3} if $l_3 \equiv 7$ mod 8, and by 
the proof of (3) of Theorem \ref{thm:def1ranks2and3}
if $l_3 \equiv 3$ mod 8), the square root of the fundamental unit of $\QQ(\sqrt{l_2 l_3})$ lies in $K$ and differs from either
$1/\sqrt{l_2}$ or $1/\sqrt{l_3}$ (according as $(l_2/l_3) = +1$ or $(l_2/l_3) = -1$, respectively)
by a totally positive element. These units together with $-1$ show that $K$ contains units with at least 
3 independent signatures.  Finally, a computation shows $\QQ(\sqrt{7}, \sqrt{23}, \sqrt{127})$ has
unit signature rank 3, with similar computations for the remaining fields, completing the proof.  
\end{proof}

Although by the previous proposition it is not possible to find a real $(\ZZ/2\ZZ)^3$-extension with unit signature rank less than 3 
that is generated by
square roots of {\it primes}, it is possible to prove the existence of infinitely many real $(\ZZ/2\ZZ)^3$-extensions having
unit signature rank 1, using the same sort of techniques as in the proof of Theorem \ref{theorem:deficiency3}.
We first make a general remark about certain norms from quadratic extensions.  

Suppose $m = \prod_{i \in S_1} q_i$ is the norm of an integer from $\QQ(\sqrt{q_1 \cdots q_n})$ with $n$ even, with $q_1, \dots , q_n$ 
distinct primes each congruent to 3 mod 4, and with 
$S_1$ a nonempty proper subset of $\{1,2,\dots, n\}$.  Let
$S_2$ be the complement of $S_1$, so $S_1 \sqcup S_2 = \{1,2,\dots, n\}$ (disjoint union).  Then
since $m$ is the norm of an integer, 
\begin{equation*}
a^2 - (q_1 \cdots q_n) b^2 = \prod_{i \in S_1} q_i ,
\end{equation*}
with integers $a$ and $b$, which implies
\begin{equation*}
\left( \prod_{i \in S_1} q_i \right ) a'^2 - \left( \prod_{i \in S_2} q_i \right ) b^2 = 1,
\end{equation*}
with integers $a'$ and $b$.

This gives the following two collections of quadratic residue constraints:

\begin{equation} \label{eq:firstconstraint}
\left( 
\dfrac 
{ \prod_{i \in S_1} q_i }
{ q_j }
\right ) = 1, 
\quad \text{i.e., } \quad
\prod_{i \in S_1} \left(  \dfrac{q_i}{q_j}  \right ) = 1
\qquad \text{for every } j \in S_2
\end{equation}
and
\begin{equation} \label{eq:secondconstraint}
\left( 
\dfrac 
{ \prod_{j \in S_2} q_j }
{ q_i }
\right ) = -1, 
\quad \text{i.e., } \quad
\prod_{j \in S_2} \left(  \dfrac{q_j}{q_i}  \right ) = -1
\qquad \text{for every } i \in S_1 .
\end{equation}

\begin{theorem} \label{thm:rank3totpos}
Let $K = \QQ( \sqrt{q_1 q_2 q_3 q_4}, \sqrt{q_1 q_2 q_5 q_6}, \sqrt{q_1 q_2 q_7 q_8} )$ where $q_1, \dots , q_8$ are
distinct primes, each congruent to 3 mod 4, and chosen so that the following quadratic residue relations are satisifed:
\begin{equation*}
\left( \frac{q_1}{q_2} \right)   =  
\left( \frac{q_1}{q_3} \right)   = 
\left( \frac{q_1}{q_4} \right)   =  
\left( \frac{q_1}{q_5} \right)   = -1 , 
\left( \frac{q_1}{q_6} \right)   = +1 ,
\left( \frac{q_1}{q_7} \right)   = -1 ,
\left( \frac{q_1}{q_8} \right)   = 
\left( \frac{q_2}{q_3} \right)   = +1 , 
\end{equation*}
\begin{equation*}
\left( \frac{q_2}{q_4} \right)   = -1 , 
\left( \frac{q_2}{q_5} \right)   =  
\left( \frac{q_2}{q_6} \right)   = +1 ,
\left( \frac{q_2}{q_7} \right)   = 
\left( \frac{q_2}{q_8} \right)   = -1 ,
\left( \frac{q_3}{q_4} \right)   = +1 , 
\left( \frac{q_3}{q_5} \right)   = -1 , 
\end{equation*}
\begin{equation*}
\left( \frac{q_3}{q_6} \right)   = +1 ,
\left( \frac{q_3}{q_7} \right)   = 
\left( \frac{q_3}{q_8} \right)   = -1 ,
\left( \frac{q_4}{q_5} \right)   = +1 , 
\left( \frac{q_4}{q_6} \right)   = -1 ,
\left( \frac{q_4}{q_7} \right)   = 
\left( \frac{q_4}{q_8} \right)   = +1 ,
\end{equation*}
\begin{equation*}
\left( \frac{q_5}{q_6} \right)   = -1 ,
\left( \frac{q_5}{q_7} \right)   = +1 ,
\left( \frac{q_5}{q_8} \right)   = -1 ,
\left( \frac{q_6}{q_7} \right)   = 
\left( \frac{q_6}{q_8} \right)   = 
\left( \frac{q_7}{q_8} \right)   = +1  .
\end{equation*}
Then $\{ \epsilon_1, \dots , \epsilon_7 \}$ is a set of totally positive fundamental units for $K$, where the
$\epsilon_i$, $1 \le i \le 7$, are the fundamental units for the seven quadratic subfields of $K$.  In particular, the
unit signature rank of $K$ is 1, and there are infinitely many such fields $K$. 

\end{theorem}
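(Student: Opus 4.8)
The plan is to transport the ``$m$-technology'' of Proposition~\ref{prop:m} and Kubota's squareness criterion (Proposition~\ref{prop:Kubota}) from the biquadratic setting to this rank-three field, reducing the statement to an explicit finite computation with the prescribed quadratic residue symbols, exactly in the spirit of the proof of Theorem~\ref{theorem:deficiency3}. Write $R_1,\dots,R_7$ for the squarefree parts of the seven products of the three radicands, so that the seven quadratic subfields are the $\QQ(\sqrt{R_j})$ and $\epsilon_j$ is the fundamental unit of $\QQ(\sqrt{R_j})$. Since each $R_j$ is divisible by a prime $\equiv 3 \bmod 4$, every $\epsilon_j$ has norm $+1$, so the integer $m_j=m_{\epsilon_j}$ of Proposition~\ref{prop:m} is defined. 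A unit of norm $+1$ in a real quadratic field is totally positive there (its two conjugates are $\epsilon_j$ and $1/\epsilon_j$, both positive), hence each $\epsilon_j$ is totally positive as an element of $K$; consequently, as soon as $\{\epsilon_1,\dots,\epsilon_7\}$ is shown to be a fundamental system, the only nontrivial unit signature is that of $-1$ and the unit signature rank is forced to be $1$. So the substance of the theorem is (i) to compute the seven $m_j$ and (ii) to show that no nontrivial product $\prod_j \epsilon_j^{n_j}$ is a square in $K$.

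For step (i) I would treat the subfields in two groups. Six of the $R_j$ are products of four of the $q_i$; for each of these I would relabel the four primes involved and read off $m_j$ by comparing the given Legendre symbol values to Table~\ref{table:quadresconstraints}, the hypotheses being arranged so that in each case exactly one row of the table is compatible with the data and hence $m_j$ is uniquely determined. The seventh subfield is $\QQ(\sqrt{q_1\cdots q_8})$, to which Table~\ref{table:quadresconstraints} does not apply; here I would run the general norm constraints \eqref{eq:firstconstraint} and \eqref{eq:secondconstraint} over the proper nonempty subsets $S_1\subseteq\{1,\dots,8\}$, using the prescribed residue relations to eliminate every candidate but one and thereby pin down $m_7=\prod_{i\in S_1}q_i$.

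Step (ii) rests on the rank-three analogue of Proposition~\ref{prop:Kubota}, which I would justify exactly as in the biquadratic case: because $m_j\epsilon_j$ is a square in $\QQ(\sqrt{R_j})$ and hence in $K$, the product $\prod_j \epsilon_j^{n_j}$ is a square in $K$ if and only if the integer $\prod_j m_j^{n_j}$ is, and a positive integer is a square in $K$ precisely when its squarefree part is one of the eight values $1,R_1,\dots,R_7$ (the rational square classes trivialized by $K$). The task is then to verify, for each of the $2^7-1$ nonzero exponent vectors $(n_1,\dots,n_7)\in\{0,1\}^7$, that the squarefree part of $\prod_j m_j^{n_j}$ avoids all eight of these values; organizing the $m_j$ and the $R_i$ as exponent vectors over $\FF_2$ turns this into a bookkeeping check that the quadratic residue hypotheses are tailored to pass. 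I expect this verification—together with confirming that each $m_j$ was genuinely unique in step (i)—to be the main obstacle, since it is here that the entire list of residue conditions must conspire to keep all $127$ products away from the eight forbidden classes.

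Finally, to obtain infinitely many such $K$ I would appeal to Dirichlet's theorem on primes in arithmetic progression, choosing $q_1\equiv 3 \bmod 4$ arbitrarily and then selecting each successive $q_i\equiv 3 \bmod 4$ to realize the prescribed symbols $\left(\tfrac{q_j}{q_i}\right)$ for $j<i$; each such symbol is a congruence condition on $q_i$ modulo a distinct earlier prime $q_j$ (together with the fixed condition modulo $4$), so all the conditions are simultaneously satisfiable and admit infinitely many solutions. Combined with the fundamentality and total positivity from steps (i)--(ii), this produces infinitely many real $(\ZZ/2\ZZ)^3$-extensions of unit signature rank $1$.
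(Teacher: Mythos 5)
Your proposal is correct and follows essentially the same route as the paper: determine each $m_j$ from Table~\ref{table:quadresconstraints} (for the six four-prime radicands) and from the constraints \eqref{eq:firstconstraint}--\eqref{eq:secondconstraint} (for $\QQ(\sqrt{q_1\cdots q_8})$), then apply the rank-three version of Proposition~\ref{prop:Kubota} to check all $2^7-1$ nonzero exponent vectors, and finish with Dirichlet's theorem. The only minor discrepancy is that the constraints do not pin down $m_7$ uniquely --- the paper is left with three candidates ($q_1q_6q_7$, $q_3q_5q_7$, $q_1q_3q_5q_6$) and simply runs the final verification for each of them, which your framework accommodates without change.
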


\begin{proof}
Using Table \ref{table:quadresconstraints}, we see that the values of $m_i = m_{\epsilon_i}$ for six of the quadratic 
subfields $k_i$ of $K$ are as in Table \ref{table:rank3mvalues}

\begin {table}[ht] 

\begin{center}
\begin{tabular}{c|l}
$k_i$ & \quad $m_i = m_{\epsilon_i}$ \\
 \hline
$\QQ ( \sqrt{q_1 q_2 q_3 q_4} )$   \rule[0pt]{0pt}{12pt}  &  \quad $m_1 = q_2 q_3 q_4$   \\[0.1cm]
$\QQ ( \sqrt{q_1 q_2 q_5 q_6} )$   \rule[0pt]{0pt}{12pt}  &  \quad $m_2 = q_2$   \\[0.1cm]
$\QQ ( \sqrt{q_3 q_4 q_5 q_6} )$   \rule[0pt]{0pt}{12pt}  &  \quad $m_3 = q_4 q_6$   \\[0.1cm]
$\QQ ( \sqrt{q_1 q_2 q_7 q_8} )$   \rule[0pt]{0pt}{12pt}  &  \quad $m_4 = q_7$   \\[0.1cm]
$\QQ ( \sqrt{q_3 q_4 q_7 q_8} )$   \rule[0pt]{0pt}{12pt}  &  \quad $m_5 = q_7 q_8$   \\[0.1cm]
$\QQ ( \sqrt{q_5 q_6 q_7 q_8} )$   \rule[0pt]{0pt}{12pt}  &  \quad $m_6 = q_6$   
\end{tabular}
\end{center}
\caption { }
\label{table:rank3mvalues} 

\end {table}

Using the values of the quadratic residue symbols $(q_i/q_j)$, $1 \le i < j \le 8$, together with equations
\eqref{eq:firstconstraint} and  \eqref{eq:secondconstraint} shows that the only possibility for
$m_7$, the value of $m_{\epsilon}$ for the remaining quadratic subfield
$\QQ ( \sqrt{q_1 q_2 q_3 q_4 q_5 q_6 q_7 q_8 } )$, is $q_1 q_6 q_7$, $q_3 q_5 q_7$ or $q_1 q_3 q_5 q_6$.
An examination of the values 
\begin{equation*}
(q_2 q_3 q_4)^{n_1}
q_2^{n_2}
(q_4 q_6)^{n_3}
q_7^{n_4}
(q_7 q_8)^{n_5}
q_6^{n_6}
\left \{ 
\begin{aligned}
& (q_1 q_6 q_7)^{n_7} \\
& (q_3 q_5 q_7)^{n_7} \\
& (q_1 q_3 q_5 q_6)^{n_7}
\end{aligned}
\right \}
\end{equation*}
for $n_i$ equal to 0 or 1, $1 \le i \le 7$, shows that only $n_1 = n_2 = \cdots = n_7 = 0$ gives one of the
values  1, $q_1 q_2 q_3 q_4$, $q_1 q_2 q_5 q_6$, $q_3 q_4 q_5 q_6$, $q_1 q_2 q_7 q_8$, 
$q_3 q_4 q_7 q_8$, $q_5 q_6 q_7 q_8$, or $q_1 q_2 q_3 q_4 q_5 q_6 q_7 q_8$ up to a square.  It follows that
there are no quadratic relations among the fundamental units $\epsilon_i$, $ 1 \le i \le 7$.  
Since the quotient of $E_K$ by the group $\gp {-1, \epsilon_1, \dots , \epsilon_7 }$ is a finite abelian 2-group, the
fact that there are no elements of order 2 in this quotient shows that $E_K = \gp {-1, \epsilon_1, \dots , \epsilon_7 }$.
As before, choosing primes $q_1, \dots , q_8$ inductively using Dirichlet's theorem on primes in arithmetic
progressions shows there are infinitely many such triquadratic fields $K$.
\end{proof}

\begin{example}
An explicit example of a field $K$ as in Theorem \ref{thm:rank3totpos} is given by taking
$q_1 = 11 $, $q_2 = 67$, $q_3 = 991$,
$q_4 = 47$, $q_5 = 31$, $q_6 = 7$,
$q_7 = 199$, $q_8 = 19$. In this case, the value of $m_{\epsilon}$ for the quadratic subfield
$\QQ ( \sqrt{q_1 q_2 q_3 q_4 q_5 q_6 q_7 q_8 } )$, is $q_3 q_5 q_7= 6113479 $ (here the fundamental unit $\epsilon$ has size
roughly $1.76 (10^{8153})$).
\end{example}

We next consider the question of finding multiquadratic extensions having maximal rather than minimal unit signature rank.  
If any of the quadratic subfields of a multiquadratic field has a fundamental unit whose norm is $+1$ then the deficiency
of the multiquadratic field must be at least 1.  As a result, a precursor to finding a multiquadratic field of 
rank $t$ with the maximum possible unit signature rank of $2^t$ would be to find a multiquadratic field all of whose quadratic subfields
have fundamental units of norm $-1$. The following result provides infinitely many examples of such
fields when $t = 3$.

\begin{proposition} \label{prop:rnk3unitsminus1}
Suppose that $p_1, p_2,p_3$ are distinct primes with
$p_2 \equiv p_3 \equiv 1$ mod 4 and with $p_1 = 2$ or $p_1 \equiv 1$ mod 4,
and suppose further that at
least two of $\left( \frac{p_1}{p_2} \right)$, $\left( \frac{p_1}{p_3} \right)$, and
$\left( \frac{p_2}{p_3} \right)$ are equal to $-1$.  
Then the fundamental unit in $\QQ(\sqrt{p_1 p_2 p_3} )$ has norm $-1$.
In particular, if $(p_1/p_2) = (p_1/p_3) = (p_2/p_3) = -1$ then the fundamental unit in every quadratic
subfield of $L = \QQ( \sqrt{p_1}, \sqrt{p_2}, \sqrt{p_3})$ has norm $-1$.  
\end{proposition}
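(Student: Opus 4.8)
The plan is to argue by contradiction using the `$m$-technology' of Proposition~\ref{prop:m}, exactly in the spirit of the proofs of Propositions~\ref{prop:p} and~\ref{prop:p1p2}. Set $d = p_1 p_2 p_3$ and $k = \QQ(\sqrt{d})$, and suppose toward a contradiction that the fundamental unit $\epsilon$ of $k$ satisfies $\text{Norm}_{k/\QQ}(\epsilon) = +1$. Then Proposition~\ref{prop:m} produces a squarefree integer $m = m_\epsilon$ dividing the discriminant of $k$, with $1 < m$, $m \ne d$, and $m$ equal to the norm of an integer of $k$. The goal is to show that every admissible value of $m$ forces at least two of the three quadratic residue symbols $\left( \frac{p_1}{p_2} \right)$, $\left( \frac{p_1}{p_3} \right)$, $\left( \frac{p_2}{p_3} \right)$ to equal $+1$, directly contradicting the hypothesis that at least two of them equal $-1$.

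First I would enumerate the admissible $m$. When $p_1 \equiv 1$ mod $4$ the discriminant is $d$, the ring of integers is $\ZZ[(1+\sqrt d)/2]$, and $m$ ranges over the proper nontrivial squarefree divisors $p_1, p_2, p_3, p_1 p_2, p_1 p_3, p_2 p_3$; when $p_1 = 2$ the discriminant is $4d$, the integers are $\ZZ[\sqrt d]$, and $m$ ranges over $2, p_2, p_3, 2p_2, 2p_3, p_2 p_3$. In each case I would write the norm condition as $a^2 - d b^2 = 4m$ (respectively $a^2 - d b^2 = m$ in the $p_1 = 2$ case) and reduce modulo the primes dividing $d$, in the same style as the derivation of \eqref{eq:firstconstraint} and \eqref{eq:secondconstraint}. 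For a one-prime value such as $m = p_1$ one reads off $\left( \frac{p_1}{p_2} \right) = \left( \frac{p_1}{p_3} \right) = +1$ immediately; for a two-prime value such as $m = p_2 p_3$ one first shows the relevant prime divides $a$, cancels it to reach $p_2 p_3 (a')^2 - p_1 b^2 = 4$, and then reduces modulo $p_2$ and $p_3$, using $\left( \frac{-1}{p} \right) = +1$ for $p \equiv 1$ mod $4$ together with quadratic reciprocity to convert $\left( \frac{-p_1}{p_j} \right)$ into $\left( \frac{p_1}{p_j} \right)$. A labor-saving observation is that the values $m$ and $d/m$ are complementary and impose identical constraints, so in each parity case it suffices to treat three values of $m$ rather than six; recording the outcome in a short table shows that each admissible $m$ pins down a specific pair among the three symbols as $+1$.

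The contradiction is then immediate: the hypothesis guarantees at least two of $\left( \frac{p_1}{p_2} \right), \left( \frac{p_1}{p_3} \right), \left( \frac{p_2}{p_3} \right)$ are $-1$, so at most one can be $+1$, whereas every admissible $m$ requires two of them to be $+1$. Hence no admissible $m$ exists, so $\text{Norm}_{k/\QQ}(\epsilon) = +1$ is impossible and the fundamental unit of $\QQ(\sqrt{p_1 p_2 p_3})$ has norm $-1$.

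I expect the main obstacle to be organizational rather than conceptual: carefully running the $p_1 = 2$ case (where the discriminant is $4d$, the integers are $\ZZ[\sqrt d]$, and $\left( \frac{2}{p} \right)$ plays the role of $\left( \frac{p_1}{p} \right)$) in parallel with the odd case, and tracking the reciprocity sign flips so that each sub-case genuinely yields two positive symbols. Finally, the `in particular' statement follows by applying the norm results to each of the seven quadratic subfields of $L = \QQ(\sqrt{p_1}, \sqrt{p_2}, \sqrt{p_3})$: the fields $\QQ(\sqrt{p_i})$ have fundamental units of norm $-1$ by Proposition~\ref{prop:p} (and directly from $1+\sqrt 2$ when $p_1 = 2$), the three fields $\QQ(\sqrt{p_i p_j})$ have fundamental units of norm $-1$ by Proposition~\ref{prop:p1p2} using the respective hypothesis $\left( \frac{p_i}{p_j} \right) = -1$, and $\QQ(\sqrt{p_1 p_2 p_3})$ is covered by the main statement since all three symbols equal $-1$.
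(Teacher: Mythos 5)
Your proposal is correct and follows essentially the same route as the paper: contradiction via the integer $m=m_\epsilon$ of Proposition~\ref{prop:m}, enumeration of the six admissible values of $m$, reduction of the norm equation modulo the primes dividing $d$ (using $\left(\frac{-1}{p}\right)=+1$ for $p\equiv 1 \bmod 4$), the observation that $m$ and $d/m$ impose the same constraints, and the conclusion that every case forces two symbols to equal $+1$. The paper records exactly this in Table~\ref{table:quadresconstraints3primes} and likewise deduces the final statement from Propositions~\ref{prop:p} and~\ref{prop:p1p2}.
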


\begin{proof}
Suppose the fundamental unit $\epsilon$ in $\QQ(\sqrt{p_1 p_2 p_3})$ has norm $+1$.  Then the possible values for
$m_\epsilon$ from Proposition \ref{prop:m} are $p_1,p_2,p_3$ or $p_1 p_2$, $p_1 p_3$, $p_2 p_3$.  
If $m = p_1$, then there are integers $a,b$ with $a^2 -  p_1 p_2 p_3 \cdot b^2 = 4 p_1$
(or $a^2 - 2 p_2 p_3 b^2 = 2$ if $p_1 = 2$), so integers $a',b$ with $p_1 \cdot (a')^2 -  p_2 p_3 \cdot b^2 = 4 $ 
(or $2 (a')^2 - p_2 p_3 b^2 = 1$ if $p_1 = 2$), hence
\begin{equation}
\left( \frac{p_1}{p_2} \right) = \left( \frac{p_1}{p_3} \right) =  +1 ,
\end{equation}
with similar statements for $m = p_2$ and $m = p_3$. 
If $m = p_1 p_2$, then there are integers with $a^2 -  p_1 p_2 p_3 \cdot b^2 = 4 p_1 p_2$
(or $a^2 -  2 p_2 p_3 \cdot b^2 = 2 p_2$ if $p_1 = 2$),
so integers with $p_1 p_2\cdot (a')^2 -  p_3 \cdot b^2 = 4$ ($2 p_2\cdot (a')^2 -  p_3 \cdot b^2 = 1$ if $p_1 = 2$), which
give the same conditions as $m = p_3$.  Again there are similar statements for $m = p_1 p_3$ and $m = p_2 p_3$.  
This information is summarized in Table \ref{table:quadresconstraints3primes}.

\begin {table}[ht] 

\begin{center}
\begin{tabular}{l||c|c|c}
$m$ &
$\left( \frac{p_1}{p_2} \right)$  &  $\left( \frac{p_1}{p_3} \right)$  & $\left( \frac{p_2}{p_3} \right)$   \rule[-10pt]{0pt}{15pt}   \\
 \hline
 \hline
$p_1$ &  $+1$  & $+1$  &    \\
$p_2$ &  $+1$ &       &   $+1$     \\
$p_3$ &    &  $+1$  &  $+1$ \\
\hline
$p_1 p_2$ &    & $+1$  &  $+1$ \\
$p_1 p_3$ &  $+1$ &       &   $+1$     \\
$p_2 p_3 $ &  $+1$  & $+1$  &    \\
\hline   
\end{tabular}
\end{center}
\caption {
Summary of quadratic residue symbol constraints for distinct primes $p_1$, $p_2$, $p_3$, 
with $p_2 \equiv p_3 \equiv 1$ mod 4 and with $p_1 = 2$ or $p_1 \equiv 1$ mod 4,
if the given value of $m$ is the norm of an integer from $\QQ( \sqrt{ p_1 p_2 p_3 } )$.
}
\label{table:quadresconstraints3primes} 

\end {table}

As a consequence, for any $p_1,p_2,p_3$ with at least two of $\left( \frac{p_1}{p_2} \right)$, $\left( \frac{p_1}{p_3} \right)$, and
$\left( \frac{p_2}{p_3} \right)$ equal to $-1$, then  
all of these conditions fail, so the fundamental unit in $\QQ(\sqrt{p_1 p_2 p_3})$ has norm $-1$, which proves the
first statement in the proposition.  Combining this with Propositions \ref{prop:p} and \ref{prop:p1p2} proves the
remaining statement.
\end{proof}

The result in Proposition \ref{prop:rnk3unitsminus1} follows from a result of R\'edei (see \cite[Proposition 4.1]{St}
and Stevenhagen's comments regarding it).  The method used for the simple 
proof above provides an equally elementary proof of the following result (that
also follows from R\'edei's theorem) generalizing the second statement in Proposition \ref{prop:rnk3unitsminus1}.

\begin{proposition} \label{prop:todd}
Suppose that $p_1, \dots, p_t$ are distinct primes with $p_2 \equiv p_3 \equiv \dots \equiv p_t \equiv 1$ mod 4 and
with $p_1 = 2$ or $p_1 \equiv 1$ mod 4.  If $t$ is odd, and $(p_i/p_j) = -1$ for every $i < j$ (i.e.,
$(p_i/p_j) = -1$ for all $i$ and $j$ with the convention that $(p/2) = (2/p)$ if $p_1 = 2$), then 
the fundamental unit in $\QQ(\sqrt{p_1 p_2 \dots p_t} )$ has norm $-1$.
\end{proposition}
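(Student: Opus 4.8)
The plan is to argue by contradiction using the ``$m$-technology'' of Proposition~\ref{prop:m}, generalizing the method of Proposition~\ref{prop:rnk3unitsminus1} from $t = 3$ to arbitrary odd $t$. Set $d = p_1 \cdots p_t$ and suppose, for contradiction, that the fundamental unit $\epsilon$ of $k = \QQ(\sqrt d)$ has norm $+1$; let $m = m_\epsilon$ be the associated integer. By Proposition~\ref{prop:m}, $m$ is a squarefree divisor of the discriminant of $k$ that is neither $1$ nor $d$ and is the norm of an integer from $k$; in particular $m$ divides $d$, so $m = \prod_{i \in S} p_i$ for some nonempty proper subset $S \subsetneq \{1, \dots, t\}$ (absorbing the prime $2 = p_1$ into the product via the stated convention when $p_1 = 2$). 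Write $S^c$ for the complement and $m' = d/m = \prod_{j \in S^c} p_j$.

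Next I would extract the quadratic residue constraints from the fact that $m$ is a norm, exactly as in the derivation of \eqref{eq:firstconstraint} and \eqref{eq:secondconstraint}. Writing $m = \nnorm{k}{\alpha}$ yields an integer relation $a^2 - d\,b^2 = 4m$ (with the factor $4$ dropped when $p_1 = 2$, since then $\OO_k = \ZZ[\sqrt d]$); as each $p_i$ with $i \in S$ divides both $m$ and $d$, it divides $a$, and dividing out gives $m\,(a')^2 - m'\,b^2 = 4$. Reducing modulo each $p_j$ with $j \in S^c$ forces $m$ to be a square there, giving
\begin{equation*}
\prod_{i \in S} \left( \frac{p_i}{p_j} \right) = +1 \qquad (j \in S^c),
\end{equation*}
while reducing modulo each $p_i$ with $i \in S$ gives $\left( \frac{-m'}{p_i} \right) = +1$; since $p_i \equiv 1 \bmod 4$ has $\left( \frac{-1}{p_i} \right) = +1$, this simplifies to
\begin{equation*}
\prod_{j \in S^c} \left( \frac{p_j}{p_i} \right) = +1 \qquad (i \in S).
\end{equation*}

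The crux is then a parity count. Because every prime is $\equiv 1 \bmod 4$ (or is $2$ with the symmetric convention), reciprocity makes all the symbols symmetric, so the hypothesis $\left( \frac{p_a}{p_b} \right) = -1$ for all $a \ne b$ applies irrespective of the order of indices. The first constraint then reads $(-1)^{|S|} = +1$, forcing $|S|$ even, and the second reads $(-1)^{|S^c|} = +1$, forcing $|S^c|$ even. But $|S| + |S^c| = t$ is odd, so $|S|$ and $|S^c|$ have opposite parity and cannot both be even---a contradiction. Hence no admissible $m$ exists, the fundamental unit of $\QQ(\sqrt d)$ cannot have norm $+1$, and therefore $\nnorm{k}{\epsilon} = -1$.

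I expect the one genuine obstacle to be the bookkeeping when $p_1 = 2$, where one cannot literally reduce modulo $2$, so only one of the two constraint families is directly available in the degenerate cases $m = 2$ (i.e.\ $S = \{1\}$) or $m = d/2$ (i.e.\ $S^c = \{1\}$). In each of these the single available constraint already involves a set of odd size $1$ and so fails on its own by the same parity count, and the convention $\left( \frac{2}{p} \right) = \left( \frac{p}{2} \right)$ keeps the symbols consistent throughout. Tracking the exact powers of $2$ in the norm relation is otherwise entirely routine, mirroring the $t = 3$ computation already carried out in Proposition~\ref{prop:rnk3unitsminus1}.
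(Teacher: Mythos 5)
Your proposal is correct and follows essentially the same route as the paper's own proof: assume norm $+1$, write $m=\prod_{i\in S}p_i$, derive the two norm-form constraints $\prod_{i\in S}(p_i/p_j)=+1$ for $j\notin S$ and $\prod_{j\notin S}(p_j/p_i)=+1$ for $i\in S$ (the sign flip from \eqref{eq:secondconstraint} being due to $(-1/p_i)=+1$, exactly as you note), and conclude by the parity of $|S|$ and $|S^c|$. The extra care you take with the $p_1=2$ bookkeeping is consistent with the paper's convention and does not change the argument.
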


\begin{proof}
Suppose by way of contradiction that the norm of the fundamental unit is $+1$ and let $S$ be the nonempty proper
subset of $\{1,2,\dots, t\}$ with $m= \prod_{i \in S} p_i$ for the
associated integer from Proposition \ref{prop:m}.  

Since $m$ is the norm of an integer from $\QQ(\sqrt{p_1 \cdots p_t})$, then just as in the derivation of
equations \eqref{eq:firstconstraint} and \eqref{eq:secondconstraint} it follows that
\begin{equation} \label{eq:p1mod4constraint1}
\prod_{i \in S} \left(  \dfrac{p_i}{p_j}  \right ) = 1
\qquad \text{for every } j \notin S
\end{equation}
and
\begin{equation} \label{eq:p1mod4constraint2}
\prod_{i \notin S} \left(  \dfrac{p_i}{p_j}  \right ) = 1
\qquad \text{for every } j \in S 
\end{equation}
(with the convention that $(p/2) = (2/p)$ for odd primes $p$ if $p_1 = 2$).

If $t$ is odd, then either the number of elements, $\vert S \vert$, in $S$, or the number of elements not in $S$, is odd.  
Since every quadratic residue symbol $(p_i/p_j)$ is $-1$ by assumption (again with the convention that
$(p/2) = (2/p)$ if $p_1 = 2$), this contradicts either equation \eqref{eq:p1mod4constraint1} if $\vert S \vert$ is odd
or equation \eqref{eq:p1mod4constraint2} if $\vert S \vert$ is even.
\end{proof}

\begin{remark}
Consider the rank 4 multiquadratic field $L = \QQ( \sqrt{p_1}, \sqrt{p_2}, \sqrt{p_3}, \sqrt{p_4})$  with 
distinct primes $p_1 \equiv p_2 \equiv p_3 \equiv p_4 \equiv 1$ mod 4 such that all quadratic residue symbols $(p_i/p_j)$, $1 \le i < j \le 4$
are equal to $-1$.  
Then all four simple quadratic subfields $\QQ(\sqrt{p_i})$, all six `double' quadratic subfields $\QQ(\sqrt{p_i p_j})$, and all four
`triple' quadratic subfields $\QQ(\sqrt{p_i p_j p_k})$ have fundamental units of norm $-1$ by Propositions \ref{prop:p}, \ref{prop:p1p2} and
\ref{prop:rnk3unitsminus1}. The remaining
quadratic subfield $\QQ(\sqrt{p_1 p_2 p_3 p_4})$ can have fundamental unit of norm $+1$ 
(computations suggest this is the more likely---and support the prediction,
obtained using a strong version of the heuristics of \cite{St}, that this possibility should occur 6/7 of the time)
as for the field $\QQ(\sqrt{ 5},\sqrt{ 13},\sqrt{37 },\sqrt{ 97} )$ for example,  or norm $-1$,
as for the field $\QQ(\sqrt{ 17},\sqrt{ 29},\sqrt{41 },\sqrt{ 97} )$.  In particular, the result in
Proposition \ref{prop:todd} does not hold in general for $t$ even, $t \ge 4$ (although it does hold
for $t = 2$ by Proposition \ref{prop:p1p2}).

\end{remark}

There are a number of interesting questions to investigate relating to 
the unit signature ranks in multiquadratic extensions, including those touched on above:
if $L$ is a real multiquadratic extension of rank $t$,
is it possible that every quadratic subfield of $L$ has a fundamental unit of norm $-1$?
if so, are there such fields $L$ where the maximum possible unit signature rank $2^t$ is attained? 
if all quadratic subfields have fundamental units
of norm $+1$ is it possible to construct a field $L$ with unit signature rank 1?

\begin{remark}
The results of \cite{DDK} show the signature rank of the units in the cyclotomic field of $m$th roots of unity tends to 
infinity with $m$, which might suggest that multiquadratic extensions with large rank $t$ having unit
signature rank 1 do not exist, but the possibility is certainly not ruled out. 
Further, if a (very) strong version of the heuristics in \cite{St} hold, namely: (a) that the proportions predicted in
\cite{St} hold for values of $d$ with a specified R\'edei matrix (up to conjugation by a permutation matrix), 
and (possibly less plausibly) (b) conditions in (a)
for values of $d$ constructed from different subsets of a single collection of primes can be
satisfied simultaneously and with independent probabilities, then the heuristics in \cite{St} would predict
the existence of infinitely many (although an extremely small proportion) of 
multiquadratic extensions of rank $t$ all of whose quadratic subfields have a fundamental unit of norm $-1$.
\end{remark}

\bigskip

We close by proving some results that show the unit signature rank deficiency of a real multiquadratic extension 
can be arbitrarily large.  Since the deficiency never decreases in a totally real extension of a totally real field,
this proves the unit signature rank deficiency can be arbitrarily large in real
cyclotomic fields (see Theorem \ref{thm:cyclotomicrank} for example), a result shown in \cite{DDK} conditional on the existence of infinitely
many cyclic cubic fields with a totally positive system of fundamental units.
(We note that a proof of the existence
of such cyclic cubic fields has recently been announced by Voight, et al, using techniques from
algebraic geometry.)

\begin{theorem} \label{theorem:qmultifields}
Suppose $q_1, q_2, \dots , q_{2t}$ are distinct primes with $q_1 \equiv q_2 \equiv \dots \equiv q_{2t} \equiv 3$ mod 4.
Then the field $L = \QQ( \sqrt{q_1 q_2}, \dots , \sqrt{q_{2t-1} q_{2t}})$ contains at least $t$ totally
positive units that are independent modulo squares in $L$, i.e., the unit 
signature rank deficiency $\delta(L)$ satisfies $\delta(L) \ge t$.
\end{theorem}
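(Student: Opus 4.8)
The plan is to write down the $t$ units explicitly and prove they are totally positive and independent modulo squares. For $1 \le i \le t$ let $\epsilon_i$ be the fundamental unit of the quadratic subfield $k_i = \QQ(\sqrt{q_{2i-1}q_{2i}})$. First I would check $\text{Norm}_{k_i/\QQ}(\epsilon_i) = +1$: since $q_{2i-1} \equiv q_{2i} \equiv 3 \bmod 4$ the radicand satisfies $q_{2i-1}q_{2i} \equiv 1 \bmod 4$, so the discriminant of $k_i$ equals $q_{2i-1}q_{2i}$, which is divisible by a prime $\equiv 3 \bmod 4$; hence $-1$ is not a norm from $k_i$ and $\epsilon_i$ has norm $+1$. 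Because $\epsilon_i > 0$ and $\epsilon_i \, \sigma(\epsilon_i) = \text{Norm}_{k_i/\QQ}(\epsilon_i) = +1$, the unit $\epsilon_i$ is totally positive in $k_i$, and since every real embedding of $L$ restricts to a real embedding of $k_i$, it remains totally positive as a unit of $L$.

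Next I would translate independence modulo squares into a multiplicative statement using the `$m$-technology'. Proposition \ref{prop:m} attaches to $\epsilon_i$ a squarefree integer $m_i$ dividing $q_{2i-1}q_{2i}$ with $m_i \ne 1$ and $m_i \ne q_{2i-1}q_{2i}$, so necessarily $m_i \in \{q_{2i-1}, q_{2i}\}$, and $m_i \epsilon_i$ is a square in $k_i$, hence in $L$. Exactly as in the proof of Proposition \ref{prop:Kubota}, this gives $\epsilon_i \equiv m_i \pmod{(L^\ast)^2}$, so for any $n_i \in \{0,1\}$ the unit $\prod_i \epsilon_i^{n_i}$ is a square in $E_L$ if and only if the integer $\prod_i m_i^{n_i}$ is a square in $L$; and an integer $N$ is a square in $L$ precisely when its squarefree part equals $\prod_{j \in S} q_{2j-1}q_{2j}$ for some subset $S \subseteq \{1,\dots,t\}$ (with $S = \varnothing$ corresponding to rational squares).

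The crux, and the step I expect to be the only real obstacle, is a short combinatorial observation about which integers cut out subfields of $L$. The product $\prod_i m_i^{n_i}$ involves \emph{at most one} prime from each block $\{q_{2i-1}, q_{2i}\}$ (namely $m_i$, and only when $n_i = 1$), whereas every nonempty radicand $\prod_{j \in S} q_{2j-1}q_{2j}$ uses \emph{both} primes of each block in $S$. Comparing the (squarefree) prime supports forces $S = \varnothing$, and hence every $n_i = 0$. Therefore $\prod_i \epsilon_i^{n_i}$ is a square in $E_L$ only when all $n_i$ vanish, so $\epsilon_1, \dots, \epsilon_t$ are independent modulo squares; being totally positive, they contribute rank $t$ to $E_L^{+}/(E_L)^2$, giving $\delta(L) \ge t$ as claimed.
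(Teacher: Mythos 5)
Your proof is correct and follows essentially the same route as the paper: take the fundamental units $\epsilon_i$ of the subfields $\QQ(\sqrt{q_{2i-1}q_{2i}})$, note they are totally positive of norm $+1$, reduce independence modulo squares to a statement about the integers $m_i \in \{q_{2i-1}, q_{2i}\}$ via Proposition \ref{prop:m}, and observe that no nontrivial product $\prod_i m_i^{n_i}$ can have squarefree part equal to a radicand of a quadratic subfield of $L$ since each block contributes at most one prime. The only difference is that you spell out the total-positivity check and the block-support comparison slightly more explicitly than the paper does, which is fine.
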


\begin{proof}
Let $\epsilon_i$ be the fundamental unit for the subfield $k_i = \QQ(\sqrt{ q_{2i-1} q_{2i} })$.  Then
by the discussion prior to Theorem \ref{thm:def1ranks2and3}, the integer $m_i$
associated to $\epsilon_i$ as in Proposition \ref{prop:m} 
equals $q_{2i-1}$ if $( \frac{q_{2i-1}}{q_{2i}} ) = +1$,
and equals $q_{2i}$ if $( \frac{q_{2i-1}}{q_{2i}} ) = -1$.

Suppose some product
\begin{equation*}
\epsilon_{1}^{a_{1}} \epsilon_{2}^{a_{2}} \cdots 
\epsilon_{t}^{a_{t}}  ,
\end{equation*}
where each exponent $a_{i}$ is either 0 or 1, is a square in $L$.  Since $ m_i$ and
$\epsilon_i$ differ by a square in $k_i$, it would follow that the integer
\begin{equation*}
m = m_{1}^{a_{1}} m_{2}^{a_{2}} \cdots 
m_{t}^{a_{t}}  
\end{equation*}
would be a square in $L$. 
Then $\QQ(\sqrt{m})$ would be a subfield of $L$, i.e., 
%
%
$m$ would differ by a rational square from some product
$(q_1 q_2)^{b_1} \dots (q_{2t-1} q_{2t})^{b_t}$ where the exponents $b_i$
are either 0 or 1.
Since the $q_i$ are distinct primes, it is clear that this can only
happen if $a_i = 0$ for every $i =1,2, \dots , t$

Hence $\epsilon_1 , \dots , \epsilon_t $ are totally positive units 
that are independent modulo squares in $L$, which proves the theorem.  
\end{proof}

\begin{theorem} \label{thm:cyclotomicrank}
Suppose the positive integer $n$ is divisible by at least $2t$ distinct primes congruent to 3 mod 4.  Then
the unit signature rank deficiency of the maximal real subfield $\QQ(\zeta_n)^+$ of the
cyclotomic field of $n$th roots of unity is at least $t$.  In particular, the 
unit signature rank deficiency for real cyclotomic fields can be arbitrarily large.
\end{theorem}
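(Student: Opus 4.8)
The plan is to reduce Theorem \ref{thm:cyclotomicrank} to Theorem \ref{theorem:qmultifields} together with the monotonicity of the deficiency in totally real extensions. First I would use the hypothesis to select $2t$ distinct primes $q_1, \dots, q_{2t}$, each congruent to $3$ mod $4$, dividing $n$. Setting $L = \QQ(\sqrt{q_1 q_2}, \dots, \sqrt{q_{2t-1} q_{2t}})$, which is totally real since each $q_{2i-1} q_{2i} > 0$, Theorem \ref{theorem:qmultifields} immediately gives $\delta(L) \ge t$.

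The key step is to show $L \subseteq \QQ(\zeta_n)^+$. For each prime $q \equiv 3$ mod $4$ dividing $n$, the cyclotomic field $\QQ(\zeta_q) \subseteq \QQ(\zeta_n)$ contains the quadratic subfield $\QQ(\sqrt{q^*})$ with $q^* = (-1)^{(q-1)/2} q = -q$, so $\QQ(\sqrt{-q}) \subseteq \QQ(\zeta_n)$. Forming the compositum of $\QQ(\sqrt{-q_{2i-1}})$ and $\QQ(\sqrt{-q_{2i}})$ inside $\QQ(\zeta_n)$ produces $\sqrt{(-q_{2i-1})(-q_{2i})} = \sqrt{q_{2i-1} q_{2i}}$, which is a real number and hence lies in $\QQ(\zeta_n)^+$. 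Since this holds for every $i$, the generators of $L$ all lie in the totally real field $\QQ(\zeta_n)^+$, so $L \subseteq \QQ(\zeta_n)^+$.

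Finally, because $L$ and $\QQ(\zeta_n)^+$ are both totally real and the unit signature rank deficiency never decreases in a totally real extension (the result of Edgar, Mollin and Peterson recalled in the introduction), I conclude $\delta(\QQ(\zeta_n)^+) \ge \delta(L) \ge t$. The ``in particular'' assertion then follows because Dirichlet's theorem supplies infinitely many primes congruent to $3$ mod $4$: given any $t$, one may take $n$ to be divisible by $2t$ such primes, forcing the deficiency to be at least $t$ and hence unbounded as $t \to \infty$.

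I do not expect a serious obstacle here. The only substantive ingredient beyond Theorem \ref{theorem:qmultifields} is the containment $L \subseteq \QQ(\zeta_n)^+$, which rests on the classical description of the quadratic subfields of $\QQ(\zeta_n)$; the crucial point requiring care is that the real fields $\QQ(\sqrt{q_{2i-1} q_{2i}})$ arise precisely as composita of the \emph{imaginary} quadratic subfields $\QQ(\sqrt{-q_{2i-1}})$ and $\QQ(\sqrt{-q_{2i}})$, so that $L$ lands inside the real subfield $\QQ(\zeta_n)^+$ rather than merely inside the full cyclotomic field.
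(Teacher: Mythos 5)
Your proof is correct and follows essentially the same route as the paper: restrict to the subfield $L = \QQ(\sqrt{q_1 q_2}, \dots, \sqrt{q_{2t-1} q_{2t}})$, apply Theorem \ref{theorem:qmultifields}, and invoke the Edgar--Mollin--Peterson monotonicity of the deficiency in totally real extensions. The only difference is that you spell out the containment $L \subseteq \QQ(\zeta_n)^+$ via the imaginary quadratic subfields $\QQ(\sqrt{-q}) \subseteq \QQ(\zeta_q)$, a classical fact the paper simply takes for granted.
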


\begin{proof}
If $q_1, \dots , q_{2t}$ are distinct primes congruent to 3 mod 4 that divide $n$, then
$\QQ(\zeta_n)^+$ contains the subfield 
$\QQ ( \sqrt{q_1 q_2}, \dots , \sqrt{q_{2t - 1} q_{2 t}})$, which has deficiency at least
$t$ by Theorem \ref{theorem:qmultifields}.  Since 
the unit signature rank never decreases in a totally real extension of a totally real field,
the corollary follows.
\end{proof}

\begin{remark}
As mentioned in the Introduction, and used in the previous proof, 
if $F$ and $F'$ are totally real number fields with $F \subseteq F'$, then their
unit signature rank deficiencies satisfy $\delta(F') \ge \delta(F)$.  
We emphasize 
that this inequality is not, in general, due to totally positive units in
$F$ that are independent modulo squares in $F$ remaining independent
modulo squares in $F'$.  
For example, if $F = \QQ(\sqrt{q_1 q_2})$ and
$F' = \QQ(\sqrt{q_1}, \sqrt{q_2})$ with distinct primes $q_1 \equiv q_2 \equiv 3$ mod
4, then $\delta(F) \ge 1$, so $\delta(F') \ge 1$ (the precise possibilities
are given in Theorem \ref{thm:def1ranks2and3}), yet Theorem  \ref{thm:def1ranks2and3}
shows the fundamental unit in $F$ is always a square in $F'$.  If the integer
$n$ in Theorem \ref{thm:cyclotomicrank} is divisible by 4, then 
$\QQ(\zeta_n)^+$ contains all the fields $\QQ(\sqrt{q_{2i - 1}}, \sqrt{q_{2 i}})$,
so that {\it all\/} $t$ of the units used in the proof of Theorem \ref{thm:cyclotomicrank} 
to show that $\delta( \QQ(\zeta_n)^+ ) \ge t$ are
themselves squares in $\QQ(\zeta_n)^+ $, i.e., {\it none\/} of these units themselves contribute to the 
deficiency of $\QQ(\zeta_n)^+ $.

\end{remark}

The fields in Theorem \ref{theorem:qmultifields} show there are infinitely many 
multiquadratic extensions $L$ with
Galois group $(\ZZ / 2 \ZZ)^{t}$ containing at least $t$ units that are totally positive 
and independent modulo squares (and such a multiquadratic field $L$ having unit signature rank 1 would
require $L$ to have $2^t - 1$ such units).  
The following theorem shows there are multiquadratic fields for which we can prove
better lower bounds for their unit signature rank deficiency (but still
logarithmic in the degree of the field).  The multiquadratic fields are described explictly
in the proof---they are composites of the 
fields constructed in Theorem \ref{theorem:deficiency3} and 
Theorem \ref{thm:rank3totpos}.

\begin{theorem} \label{theorem:unboundeddeficiency}
Let $t \ge 1$ be any integer.  

\begin{enumerate}

\item
There exist infinitely many multiquadratic extensions $L$ with
Galois group $(\ZZ / 2 \ZZ)^{2 t}$ containing $3t$ units that are totally positive 
and independent modulo squares, i.e., $\delta(L) \ge 3t$.

\item
There exist infinitely many multiquadratic extensions $L$ with
Galois group $(\ZZ / 2 \ZZ)^{3 t}$ containing $7t$ units that are totally positive 
and independent modulo squares, i.e., $\delta(L) \ge 7t$.

\end{enumerate}
\end{theorem}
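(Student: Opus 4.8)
The plan is to construct $L$ as the composite of $t$ pairwise linearly disjoint copies of the fields produced by Theorem~\ref{theorem:deficiency3} (for part (1)) or Theorem~\ref{thm:rank3totpos} (for part (2)), each copy built from its own set of primes. For part (1), I would fix $t$ pairwise disjoint sets of six primes $\{q_1^{(j)}, \dots, q_6^{(j)}\}$, $1 \le j \le t$, each set satisfying the quadratic residue relations of Theorem~\ref{theorem:deficiency3}, and set $K^{(j)} = \QQ(\sqrt{q_1^{(j)} q_2^{(j)} q_3^{(j)} q_4^{(j)}}, \sqrt{q_1^{(j)} q_2^{(j)} q_5^{(j)} q_6^{(j)}})$ and $L = K^{(1)} \cdots K^{(t)}$. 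Since radicands from distinct copies have disjoint prime supports, the $2t$ generating radicands are independent modulo squares, so $L$ is multiquadratic with $\Gal(L/\QQ) \cong (\ZZ/2\ZZ)^{2t}$; part (2) is identical using $t$ disjoint sets of eight primes satisfying the relations of Theorem~\ref{thm:rank3totpos}, giving $\Gal(L/\QQ) \cong (\ZZ/2\ZZ)^{3t}$. Infinitely many such $L$ exist by choosing the primes inductively with Dirichlet's theorem exactly as in the base theorems, since the required congruences only constrain the primes within a single copy.

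Let $\epsilon_i^{(j)}$ denote the fundamental units of the quadratic subfields occurring in copy $j$, so $1 \le i \le 3$ in part (1) and $1 \le i \le 7$ in part (2). Theorems~\ref{theorem:deficiency3} and~\ref{thm:rank3totpos} guarantee that each $\epsilon_i^{(j)}$ is totally positive in $K^{(j)}$; because $L/K^{(j)}$ is totally real, every real embedding of $L$ restricts to a real embedding of $K^{(j)}$, so each $\epsilon_i^{(j)}$ remains totally positive in $L$. This produces $3t$ (respectively $7t$) totally positive units, and by the identification of $\delta(L)$ with the rank of the totally positive units of $L$ modulo squares, it suffices to prove these units are independent modulo squares in $L$.

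The key step is to upgrade the criterion of Proposition~\ref{prop:Kubota} to the composite. Let $m_i^{(j)}$ be the integer of Proposition~\ref{prop:m} attached to $\epsilon_i^{(j)}$, so that $m_i^{(j)} \epsilon_i^{(j)}$ is a square in its quadratic subfield, hence in $L$. Thus a product $\prod_{i,j} (\epsilon_i^{(j)})^{a_{ij}}$ with exponents $a_{ij} \in \{0,1\}$ is a square in $L$ if and only if the integer $M = \prod_{i,j} (m_i^{(j)})^{a_{ij}}$ is a square in $L$, which, as $L$ is multiquadratic, happens precisely when the squarefree part of $M$ lies in the subgroup $V$ of $\QQ^*/(\QQ^*)^2$ generated by the radicands defining $L$. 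Because the prime supports of distinct copies are disjoint, the $\FF_2$-space $V$ is the internal direct sum of the subgroups $V^{(j)}$ generated by the radicands of copy $j$, and the class of $M$ is the corresponding sum of the classes of $M_j = \prod_i (m_i^{(j)})^{a_{ij}}$, each supported on the primes of copy $j$; hence $M \in V$ if and only if $M_j \in V^{(j)}$ for every $j$. But $V^{(j)}$ is exactly the four-element group $\{1, d_1, d_2, d_1 d_2\}$ (respectively its eight-element analogue) appearing in the proof of the relevant base theorem, and those proofs show $M_j \in V^{(j)}$ forces all exponents $a_{ij}$ to vanish. Therefore every $a_{ij} = 0$, the units are independent modulo squares, and $\delta(L) \ge 3t$ (respectively $7t$).

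The step demanding the most care is this last decomposition: one must check that membership of $M$ in $V$ genuinely splits copy-by-copy, which rests on the disjointness of the prime supports together with the elementary fact that in a multiquadratic field an integer is a square exactly when its squarefree part is a product, modulo rational squares, of the defining radicands. Once this reduction is secured, the within-copy independence is precisely what Theorems~\ref{theorem:deficiency3} and~\ref{thm:rank3totpos} already establish, so no further arithmetic input beyond Dirichlet's theorem is required.
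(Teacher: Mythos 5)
Your proposal is correct and follows essentially the same route as the paper: the paper also takes $L$ to be the composite of $t$ fields from Theorem~\ref{theorem:deficiency3} (resp.\ Theorem~\ref{thm:rank3totpos}) built on pairwise disjoint sets of primes, passes from the units $\epsilon_{i}^{(j)}$ to the integers $m_{i}^{(j)}$, and uses the coprimality of the supports to reduce the squareness condition copy-by-copy to the independence already established in the base theorems. Your $\FF_2$-direct-sum phrasing of that last reduction is just a more formal rendering of the paper's observation that the product of the $m_{i,j}$ must differ by a rational square from a product of the $d_{i,j}$, which then splits over the indices $j$.
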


\begin{proof}
(1) Choose real biquadratic fields $K_1, K_2, \dots, K_t$ as in Theorem \ref{theorem:deficiency3},
each of which has unit signature rank deficiency 3 and whose odd discriminants are relatively
prime in pairs.  Let $L = K_1 K_2 \dots K_t$ be the composite field, so that $L$ is
a multiquadratic extension with Galois group $(\ZZ / 2 \ZZ)^{2 t}$.  
For each $i$ with 
$1 \le i \le t$, let 
$k_{i,1} = \QQ(\sqrt{d_{i,1}})$, 
$k_{i,2} = \QQ(\sqrt{d_{i,2}})$ and 
$k_{i,3} = \QQ(\sqrt{d_{i,3}})$ denote the three real quadratic
subfields of $K_i$ with corresponding totally positive fundamental units
$\epsilon_{i,1}$, $\epsilon_{i,2}$ and $\epsilon_{i,3}$,
which are independent modulo squares in $K_i$,
as in Theorem \ref{theorem:deficiency3}.

For $j = 1,2,3$, let $m_{i,j}$ denote the integer associated to $\epsilon_{i,j}$
in $k_{i,j}$ as in Proposition \ref{prop:m}, so that $m_{i,j}$ divides the odd integer $d_{i,j}$
and $m_{i,j} \epsilon_{i,j}$ is a square in $k_{i,j}$.

Suppose some product
\begin{equation*}
\epsilon_{1,1}^{a_{1,1}} \epsilon_{1,2}^{a_{1,2}} \epsilon_{1,3}^{a_{1,3}} \cdots 
\epsilon_{t,1}^{a_{t,1}} \epsilon_{t,2}^{a_{t,2}} \epsilon_{t,3}^{a_{t,3}} ,
\end{equation*}
where each $a_{i,j}$ is either 0 or 1, is a square in $L$.  
Then 
\begin{equation} \label{eq:mproduct}
m_{1,1}^{a_{1,1}} m_{1,2}^{a_{1,2}} m_{1,3}^{a_{1,3}} \cdots 
m_{t,1}^{a_{t,1}} m_{t,2}^{a_{t,2}} m_{t,3}^{a_{t,3}}
\end{equation}
would also be a square in $L$.  The product in \eqref{eq:mproduct} would therefore differ
by the square of a rational number from some 
product of the $d_{i,j}$ ($1 \le i \le t$ 
and $ 1 \le j \le 3$) since the square roots of such products generate $\QQ$ and
all the quadratic subfields of $L$.
By assumption, $d_{i,j}$ and $d_{i',j'}$ are relatively prime if $i \ne i'$ and so
$m_{i,j}$ and $m_{i',j'}$ are also relatively prime if $i \ne i'$.
It follows that, for each $i = 1, 2, \dots , t$, the product
$m_{i,1}^{a_{i,1}} m_{i,2}^{a_{i,2}} m_{i,3}^{a_{i,3}}$  in 
\eqref{eq:mproduct} would differ by the square of a rational number from some product of
$d_{i,1}$, $d_{i,2}$ and $d_{i,3}$.   But then
$\epsilon_{i,1}^{a_{i,1}} \epsilon_{i,2}^{a_{i,2}} \epsilon_{i,3}^{a_{i,3}}$
would be a square in $K_i$, which implies $a_{i,1} = a_{i,2} = a_{i,3} = 0$
since $\epsilon_{i,1}$, $\epsilon_{i,2}$ and $\epsilon_{i,3}$ are independent modulo squares
in $K_i$. 
Hence $a_{i,j} = 0$ 
for every $i$ and $j$, so the $3 t$ totally positive units $\epsilon_{i,j}$ for $1 \le i \le t$ and $1 \le j \le 3$ 
are independent modulo squares in $L$, which proves (1).

Applying the same proof, mutatis mutandis, using
Theorem \ref{thm:rank3totpos} instead of Theorem \ref{theorem:deficiency3} proves (2), so we
omit the details.
\end{proof}

\section*{Acknowledgments}
We would like to thank Evan Dummit and Richard Foote for helpful conversations.  We would
particularly like to thank Eduardo Friedman and Robert Auffarth, whose questions involving unit signature ranks
in cyclotomic extensions and their possible application to the number of principal 
polarizations of certain abelian varieties prompted us to take a closer look at multiquadratic extensions.

\end{document}